\documentclass[11pt]{article}
\usepackage{amsmath,amssymb,amsthm}
\usepackage{booktabs}
\usepackage{multirow}
\usepackage[ruled,vlined]{algorithm2e}
\usepackage{color}
\usepackage[T1]{fontenc}
\newtheorem{theorem}{Theorem}[section]
\newtheorem{lemma}[theorem]{Lemma}
\newtheorem{proposition}[theorem]{Proposition}
\newtheorem{corollary}[theorem]{Corollary}
\newtheorem{remark}[theorem]{Remark}

\theoremstyle{definition}
\newtheorem{notation}[theorem]{Notation}

\def\ga{{\gamma}}

\def\Om{\Omega}
\def\Si{\Sigma}

\def\d{\delta}

\def\la{\langle}
\def\ra{\rangle}

\newcommand{\R}{\mathbb{R}}
\newcommand{\Z}{\mathbb{Z}}

\newcommand{\Sym}{{\rm Sym}}
\newcommand{\Alt}{{\rm Alt}}
\newcommand{\Prob}{{\rm Prob}}
\newcommand{\Del}{{\Delta}}
\newcommand{\calS}{\mathcal{S}}
\newcommand{\calU}{\mathcal{U}}
\newcommand{\calP}{\mathcal{P}}
\newcommand{\calR}{\mathcal{R}}

\newcommand{\calN}{\mathcal{N}}
\newcommand{\calF}{\mathcal{F}}
\newcommand{\calG}{\mathcal{G}}
\newcommand{\calB}{\mathcal{B}}
\newcommand{\calC}{\mathcal{C}}

\newcommand{\prob}{\mbox{\rm Prob}}
\newcommand{\true}{\mbox{\tt true}}
\newcommand{\false}{\mbox{\tt false}}
\newcommand{\tracecycle}{\mbox{\sc TraceCycle}}


{

}

\begin{document}

\title{Identifying
long cycles in finite alternating and symmetric groups 
acting on subsets}
\author{Steve Linton$^\ast$, 
Alice C.  Niemeyer$^{+}$ and Cheryl E. Praeger$^{++}$}
\maketitle


\begin{abstract}
Let $H$ be  a  permutation group  on a  set $\Lambda$,  which is
permutationally isomorphic to a  finite alternating or symmetric group
$A_n$ or  $S_n$ acting on the $k$-element subsets  of points
from $\{1,\ldots,n\}$,  for some arbitrary  but fixed $k$. Suppose
moreover that no isomorphism with this action is known.  
We show that key elements
of $H$ needed to construct such an isomorphism $\varphi$,
such as those whose image under $\varphi$ is an
$n$-cycle or $(n-1)$-cycle, can  be  recognised with  high
probability by the lengths of  just four of their cycles in $\Lambda$.
\end{abstract}
\noindent
{\bf 2010 AMS Classification:} 
Primary 20B30; Secondary  60C05, 20P05, 05A05\\

\noindent
{\bf Keywords:} \quad Symmetric and alternating groups in subset actions,
large base permutation groups, 
finding long cycles
\footnote{
${}^\ast$ School of Computer Science, University of St. Andrews, North Haugh,
St. Andrews, Fife, KY16 9SX, Scotland\\
{\tt sal@cs.st-andrews.ac.uk}\\
${}^{+}$ Department of Mathematics and Statistics, Maynooth
University, Co. Kildare, Ireland.\\   
{\tt Alice.Niemeyer@nuim.ie}\\
${}^{++}$ Centre for the Mathematics of Symmetry and Computation, The
University of Western Australia, 35 Stirling Hwy, Crawley, WA 6009,
Australia.\\
{\tt Cheryl.Praeger@uwa.edu.au}}

\section{Introduction}
The  second and  third authors predicted in  \cite{NieP3} that, 
for  a  permutation group  $H$ on a  set $\Lambda$,  which is
permutationally isomorphic to a  symmetric group
 $S_n$ acting on the $k$-element subsets  of points
from $\{1,\ldots,n\}$ (that is in its $k$-set action),  
for some arbitrary  but fixed $k$,  it should be possible to recognise
an element in $H$ corresponding to an $n$-cycle in $S_n$ by the
lengths of just four of its cycles in $\Lambda.$ The purpose of this
paper is to prove this result.

\begin{theorem}\label{the:main-short}  
Let $H$ be 
 a  permutation group  on a  set $\Lambda$,  which is
permutationally isomorphic, via an unknown isomorphism $\varphi$, to a  finite
 symmetric group  $S_n$ in its  $k$-set action, for some $k$.  
Let $h$ be a uniformly distributed random element of $H$ 
and let $\lambda_1, \ldots,\lambda_4$ be independent, 
uniformly distributed random points of $\Lambda.$ 
 Then there exist  positive
constants $N_0$  and $c$ such that, for $n\ge N_0$,
$$\prob \left( 
\begin{array}{l}
\varphi(h)\mbox{\ is\ an \ }\\
n\mbox{-cycle}
\end{array}
\left\vert 
\begin{array}{l} 
\mbox{the\ } h\mbox{-cycle\ containing\ }\lambda_i\mbox{\ has}\\
\mbox{length\ } n,\mbox{\ for\  }i=1,\ldots, 4\ \\
\end{array}\right. 
\right) > 1 - \dfrac{c}{n^{\tfrac{1}{6}}}.$$
\end{theorem}

Subset actions of $S_n$ and the alternating group $A_n$ play a crucial
role  in algorithms  for  permutation groups.   They  are examples  of
`large-base'   actions.   Most   primitive   permutation  groups   are
`small-base' and  very efficient  algorithms are available  to compute
with  them  (for  a  detailed  definition  see  \cite[p.~51]{Seress}).
However these  algorithms become prohibitively  expensive when applied
to  large-base groups  and, therefore,  alternative means  of handling
large-base groups  are essential (see \cite[Chapter  10]{Seress} for a
discussion  on currently  available  algorithms for  this case).   The
large-base primitive  permutation groups  all contain in  their socles
alternating  groups  with  associated  subset actions.  Hence  finding
efficient  algorithms for  these actions  is important.  

The probabilistic algorithm described in \cite{LNP} 
recognises alternating and symmetric groups in their
actions on $k$-sets constructively. It takes as input a 
group $H$ and an integer $n$. Under the assumption that $H$ is
isomorphic to $S_n$, the algorithm examines a number of random
elements of $H$ seeking to find \emph{key-elements}, namely
elements   for which good estimates for their proportions in $S_n$  are known
and which have particular
properties. Should this search fail, the algorithm concludes that the
assumption that $H$ is isomorphic to $S_n$ is incorrect and reports
that $H$ is not isomorphic to $S_n$. 
Otherwise, it attempts to construct an isomorphism from $H$ to $S_n$.
Since this algorithm is randomised, there is a possibility that the
search appears to succeed but in fact has not found suitable
elements. In  most settings this will be detected as part of the
larger algorithm, see \cite{LNP}.

The theoretical  underpinning of the  algorithm described above  is to
determine  very  good  bounds  for  the  probability  of  finding  the
key-elements in  $H$ under  the assumption that  $H$ is  isomorphic to
$S_n$. This  is the purpose of  the current paper. For  the connection
between estimation results and probabilistic algorithms in the context
of recognition algorithms for groups see \cite{NPS}.

The groups  the algorithm can  take as  input need not  be permutation
groups. Rather,  they may belong  to a more general  class of groups
called \emph{groups of black  box permutations} (see \cite{LNP}). This
allows the algorithm to be  employed in different computational models,
for  example,  to deal  with  groups  given  by  a set  of  generating
matrices,  which are  permutation isomorphic  to $S_n$  acting on  the
underlying  vector  space, without  converting  such  a group  into  a
permutation group.

The crucial requirement is the ability to compute the image of a point
under the action of  a generator of the group. Suppose  that $t$ is an
upper bound for the time taken  to perform this action, which might be
viewed as a  black box procedure. Then the time  required to determine
that a  word of length  $r$ in the  group generators permutes  a given
point in a cycle of length $m$ is at most $mrt$. In some contexts this
time can  even be very  much less than the  time required to  find the
product of two group generators.

For suitable $n$ and $k$ the algorithms in \cite{LNP} have running
time (excluding any time needed to read the input) growing
significantly more slowly than $\binom{n}{k}$, implying that they
can use the input black box permutations only through computing their action on
a selection of the $\binom{n}{k}$ points, and not through
examining any other aspect of their structure, or computing any other
elements of the group they generate. For example, checking that the
$n$-th power of an element is the identity may already be more
expensive than our entire algorithm. 

The key elements sought by the algorithm to  recognise $A_n$ or $S_n$ in
its  $k$-set action  are  elements containing  an $m$-cycle  for
large $m$,  as described in Table~1.  Our Theorem~\ref{the:main-short}
follows from a more general theorem, Theorem~\ref{main-theorem}, which
determines the probability  of finding elements of
each of these  types among a certain number of random elements of $H$.
Once these key elements have been found,  a permutational isomorphism
from $H$ on $\Lambda$  to $A_n$  or $S_n$  on  $k$-sets can  be
implemented  using the  methods described in
\cite[Section~4]{BratPak}, especially 
Method~B, or those  described in \cite[Sections~4 and 5]{Bealsetal03},
especially Lemmas~4.1, 4.3 and 5.5.  Alternative methods, focussing in
particular on the $k$-set action, are developed in \cite{LNP}.

\subsection{Context of our results}
In a  seminal collection of  papers, Erd\"os and Tur\'an  initiated the
study of  asymptotic  behaviour of the proportions of  various kinds of 
elements   in   permutation   groups.   For   example,   they   showed
\cite{ETa,ETb}  that for $n$  large enough,  most elements  in the
symmetric group $S_n$ of degree $n$
have order $n^{(\frac{1}{2} +o(1))\log(n)}.$ In the same vein
 Warlimont \cite{Warlimont78}
proved that the  conditional probability that a random  element $g$ in
$S_n$  is  an $n$-cycle,  given  that $g^n  =1$,  is  $1 -  O(n^{-1}).$

Applied algorithmically, Warlimont's result  is used to conclude, from
the fact that the $n$th power  of a `hidden' permutation $g\in S_n$ is
the identity, that  $g$ is almost certainly an  $n$-cycle.  Finding an
$n$-cycle  is  a key  step  in  many algorithms  that  `constructively
recognise'  $S_n$,  so  this  is valuable.  However,  testing  whether
$g^n=1$    requires   $\log(n)$    multiplications   of    black   box
permutations. In  computational models  where a  single multiplication
costs about  $\binom{n}{k}$ operations,  employing this  approach would
not yield  an algorithm  whose running  time grows  significantly more
slowly  than $\binom{n}{k}$.   The results  of \cite{NieP3}  provide a
basis  for extending  this  to a  situation where  we  know only  that
$\left\langle  g\right\rangle$ has  an  orbit of  length  $n$ in  some
action. An  extension of this nature  to $k$-set actions of  $S_n$ and
$A_n$  is   the  subject  of   this  paper.  We  refine   and  improve
significantly the main result of  \cite{NieP3}. For example, we employ
a  similar division  of the  elements of  $S_n$ into  several families
according  to properties  of points  which  lie in  cycles of  lengths
dividing  $n$.  However,  examining  this  subdivision  alone  is  not
sufficient to achieve the results in  this paper. We need to study the
probability that several $k$-element subsets of $\{1,\ldots, n\}$ have
exactly  $n$ distinct  images under  $\langle  g \rangle$  for $g$  an
element  in  one  of  the   families.  Moreover,  in  our  algorithmic
applications we also required analogous  results for elements of $S_n$
and $A_n$ containing $m$-cycles, for $m\geq n-6$.

\bigskip
\begin{table}[ht]
\begin{center}
\begin{tabular}{ccllcrc}
\toprule
Line &G&  $n$&$m$&$r$ & key-elements & $\rho(G,n,m)$ \\ \hline
1 & \multirow{3}{*}{$S_n$}&      & $n$ &  $1$ & $n$-cycle & 1 \\
2 &  &odd & $n-2$ & $2$ & 2-cycle & 1 \\
3 &  &even & $n-3$ & $2$ & 2-cycle & 2/3\\
\midrule
4 & \multirow{6}{*}{$A_n$} &odd   & $n$ & $1$  & $n$-cycle & 1 \\
5 &  &even   & $n-1$ & $1$ & $(n\!\!-\!\!1)$-cycle &$1$\\
6 & & $2$ or $4 \pmod{6}$&$n-3$&$3$& $3$-cycle & $1$\\
7 & & $3$ or $5 \pmod{6}$&$n-4$&$3$ &$3$-cycle& $3/4$\\
8 &  & $0\pmod{6}$       &$n-5$&$3$& $3$-cycle& $7/20$\\
9 &  & $1\pmod{6}$       &$n-6$&$3$& $3$-cycle&$9/40 $\\
\bottomrule
\end{tabular}
\caption{Groups and types of elements}
\label{tbl:elements}
\end{center}
\end{table}

\bigskip

In  Section~\ref{sec:algoapp}  we  briefly  describe  the  algorithmic
application,  and  in  particular   we  explain  the  meaning  of  the
parameters   $r$   and   $\rho$   in   Table~\ref{tbl:elements}.    In
Section~\ref{sec:maintheorem} we introduce the notation which we shall
use throughout  the paper and give  the precise statement  of the main
result      (Theorem~\ref{main-theorem}).       The      proof      of
Theorem~\ref{main-theorem} (and hence of Theorem~\ref{the:main-short})
is given  in Section~\ref{sec:overview}.  In particular  we exhibit an
explicit value for  the constant $c$ of Theorem~\ref{main-theorem}(a)
and Theorem~\ref{the:main-short}.  We present some background material
in                                            Sections~\ref{sec:prelim}
and~\ref{sec:binomial}. Sections~\ref{sec:tracing} - \ref{f-1} contain
the     various    parts    which     are    pulled     together    in
Section~\ref{sec:overview}    for   the    proof    of
Theorem~\ref{main-theorem}.

\section{Algorithmic Application}\label{sec:algoapp}

The results in this paper are motivated by  algorithmic applications
in \cite{LNP} and \cite{N}. 
In these   applications, $H$ is  a permutation group
acting on  a set
$\Lambda$ of $\binom{n}{k}$ points. We wish to test whether $H$ is permutation
isomorphic to $G=A_n$ or $G=S_n$  acting on the set $\binom{\Omega}{k}$ 
of  $k$-element subsets of $\Om =\{1,\dots,n\}$. 
That is to say, whether there is a group isomorphism 
$\varphi:H\rightarrow G$ and a bijection $f:\Lambda\rightarrow \binom{\Omega}{k}$
such that, for each $h\in H$ and $\lambda\in\Lambda$, $(\lambda^h)f =
(\lambda)f^{h\varphi}$.   These isomorphisms will be constructed in
the form of a computer program rather than listing the image of each element.

We say that an element
$h\in H$ {\it corresponds} to an element $g\in G$  if the permutation 
isomorphism $\varphi$ maps $h$ to $g$.
The algorithms
construct a `nice generating' set for $H$ of size 2. 
In the case where $H$ is
permutation isomorphic to $S_n$ 
in its action on $\binom{\Omega}{k}$,
this
generating set consists of elements that, in the natural representation of $S_n$ on $n$ points, correspond 
to an $n$-cycle and a 2-cycle 
interchanging two consecutive points of the $n$-cycle. In the case
where
$H$ is permutation isomorphic to 
$A_n$ in its action on $\binom{\Omega}{k}$ the nice  generating set consists of elements that in $A_n$
correspond to an $n$-cycle or $(n-1)$-cycle, and to
a $3$-cycle.  

We wish to find
these elements by selecting independent, uniformly distributed random
elements from the group $H$. However, the proportion of 2-cycles in
$S_n$ or 
3-cycles in $A_n$ is 
too small to allow us to find such elements directly by random
selection. Therefore, we seek elements in $H$ which correspond to
permutations containing a 2-cycle or a 3-cycle together with one long cycle of length
$m$, say, 
where $m$ is at least $n-6$  and $m$ is coprime to 2 or 3, respectively.
The algorithms in
\cite{LNP} and \cite{N} seek  elements $h\in H$
which correspond to the kinds of elements  $g$ listed in 
Table~\ref{tbl:elements}, where $H$ is permutation isomorphic to 
$G= S_n$  or $G=A_n$, with  $G, n$ as in the 
second and third columns. 
The fourth column, labelled $m$, lists the
length of the 
$m$-cycle which the element $g$ contains.
The fifth column, labelled $r$, lists an integer between 1 and 3. 
Ultimately we wish to find an element $h$ in $H$ which corresponds to an
element in $G$ with cycle type as recorded in the sixth column. 
This element is constructed as a power of the element $h$.

The first element in the nice generating
set for $H$ corresponds to an element
satisfying the conditions of Line~1, 4, or 5, namely it corresponds to 
 an $n$-cycle or an $(n-1)$-cycle.  
The second nice generator  corresponds to 
a $2$-cycle if $G= S_n$  and is constructed from an element $h\in H$ which
corresponds to  $g$ as in Line~2 or 3. If $G = A_n$, 
the second nice generator
corresponds to 
a $3$-cycle and is  constructed from $h\in H$ corresponding to 
an element $g$ as in Line~6, 7, 8 or 9.
The last column, labelled $\rho(G,n,m)$, records a rational number
such that the proportion of elements $h$ of $H$ which correspond 
to elements of $G$ containing an $m$-cycle and with order dividing $rm$ is 
$\frac{\rho(G,n,m)}{m}$  (see (\ref{eq:star})).

The group $H$ acts on a set $\Lambda$ of size
$|\Lambda| = \binom{n}{k}$, and in the context of the
algorithm $m$, $n$ and $k$ are so large that 
it is `too expensive' to  compute the full cycle structure of elements
of  $H$ in  their action  on $\Lambda$.  Instead we  compute  the cycle
lengths of elements $h\in H$ on a handful of  randomly chosen points of
$\Lambda$, that is to say, we  `trace' these points under the action of
$\langle  h\rangle$. 

In computer experiments in {\sf GAP} \cite{GAP}, we discovered that if
$H$  is permutation  isomorphic  to $G=S_n$  or  $A_n$ on  
$\binom{\Omega}{k}$ then, for $m, r$  as in one of
the  lines of  Table~\ref{tbl:elements},  most elements  of $H$  which
produced cycles of lengths a multiple of $m$ and dividing $rm$, when we
traced each  of four  or five independent  random points  of $\Lambda$,
corresponded to elements of  $G$ containing an $m$-cycle.  This computer
experiment  is  formalised in  procedures  {\sc  FindMCycle} and  {\sc
  TraceCycle}. Our  experimental observation turns  out to be  true in
general,  and is  proved  in Theorem~\ref{main-theorem} for
sufficiently large $n$. Our experiments also suggest that the results
hold for  smaller values of $n$.
For clarity  of
exposition the proofs of Theorem~\ref{main-theorem} 
are written in  terms of the action  of $G$ on
$\binom{\Omega}{k}$.

For $n, m$ and $r$ as in one of the lines of
Table~\ref{tbl:elements}, define 
$\calN(n,m)$ to be  the 
set of all $g\in S_n$ that contain an $m$-cycle and 
$\calN_{good}(G,n,m)$ to be  the 
set of all $g\in \calN(n,m)\cap G$ for which $o(g)$ divides
$rm.$ Note that, for given $G, n, m$, only one of the lines of
Table~\ref{tbl:elements} is satisfied, and hence $r$ is determined by $G, n, m.$
We define $\rho(G,n,m)$ to be the rational number satisfying
\begin{eqnarray}\label{eq:star}
\frac{|\calN_{good}(G,n,m)|}{|G|} = \frac{\rho(G,n,m)}{m}.
\end{eqnarray}
As an example of how to interpret this information, consider Line~3 of Table~\ref{tbl:elements}. 
The proportion of elements $g$ of $S_n$ containing an
$(n-3)$-cycle is $\frac{1}{n-3},$ and 2/3 of these elements contain
also a 2-cycle or three 1-cycles on the remaining 3 points. Thus the
proportion of elements of $S_n$ containing an $(n-3)$-cycle and having
order dividing $2(n-3)$ is $\frac{2/3}{n-3} = \frac{\rho(S_n, n,
  n-3)}{n-3}.$ In order to construct a 2-cycle (the entry in column~6 
for this line), we raise the element $g$ to the $(n-3)^{\rm rd}$ power
producing $x = g^{n-3}$. Since $n-3$ is odd, the element $x$ is 
the identity if $g$ has three fixed points, a 2-cycle if $g$
contains a 2-cycle, or possibly a 3-cycle if $g$
contains a 3-cycle and $3$ does not divide $n$. 
Thus  three quarters of  the elements of 
$\calN_{good}(S_n,n,n-3)$ yield a 2-cycle by powering. The algorithm
{\sc FindMCycle} can therefore easily be incorporated into a Monte
Carlo algorithm to construct a transposition in this case: by
repeating  {\sc FindMCycle}   a number of times we will with high
probability construct a transposition  by powering the output of  {\sc
  FindMCycle}.  The other lines have a similar interpretation for
$\rho(G, n, m).$ 

We now describe the two algorithms. Algorithm~\ref{algo:FindMCycle}
assumes that we have a function {\sc RandomGrpElt} which takes
as input a generating set $Y$ for a group $H$ and returns
independent, uniformly
distributed random elements of $H$.
Algorithm~\ref{algo:TraceCycle} assumes that we have a function
{\sc RandomPoint} which  takes as input a finite set $\Lambda$ and 
returns independent, uniformly distributed  random
points of   $\Lambda$. Note that Algorithm~\ref{algo:FindMCycle}
calls Algorithm~\ref{algo:TraceCycle}  and that we assume that
Algorithm~\ref{algo:TraceCycle} has access to the variables of 
Algorithm~\ref{algo:FindMCycle}.

\begin{algorithm}
\caption{{\sc FindMCycle}$(n,  m, r, H, \Lambda, \varepsilon, M)$}
\label{algo:FindMCycle}
\BlankLine
\KwData{Let $(n, m, r)$ be as in one of the lines of
Table~\ref{tbl:elements}.  
Let $H$ be a permutation group with a generating set $Y$ acting 
on a finite set $\Lambda$. Let  $\varepsilon$ be a real number
with $0 < \varepsilon < 1$ and let  $M$ be an integer with $M \ge 4.$}
\KwResult{An element $h\in H$ or {\tt fail};}

\BlankLine
{\em  This algorithm inspects up to $O(n\log(\varepsilon^{-1}))$
  uniformly distributed independent random elements from $H$
to find one which has
  orbits of length a multiple of $m$ and  dividing $rm$ on each of $M$ randomly
  selected points from $\Lambda$. If such an $h\in H$ is found it returns
  $h$, otherwise it returns {\tt fail}.}\\

\BlankLine
Set $N := \left\lceil 5n\log(\tfrac{2}{\varepsilon})\right\rceil$;\\
\For{ $i= 1,\ldots,N$}{
$h_i$  := {\sc RandomGrpElt}$(Y)$\;
\If {{\sc TraceCycle}$(h_i)=$ {\tt true}}{
\Return{$h_i$}\;
}
}
\Return {\tt fail};
\end{algorithm}

\begin{algorithm}
\caption{{\sc TraceCycle}$(h)$}
\label{algo:TraceCycle}
\BlankLine
\KwData{A permutation $h\in H$;}
\KwResult{A boolean `{\tt true}' or `{\tt false}'}
\BlankLine
{\em  This algorithm tests whether the permutation $h\in H$ has
  orbits of length a multiple of $m$ and  dividing $rm$ on $M$ randomly
  selected points from $\Lambda$. If this is the case it returns
{\tt true}, otherwise it returns {\tt false}.}\\
\BlankLine
\For {$i=1,..,M$}{ $\lambda_i := \mbox{\sc RandomPoint}(\Lambda)$;}
Put $\Gamma=\{\lambda_j\}_{j=1}^M$;\\
\For{$\lambda\in \Gamma$}{
\If{$|\lambda^{\langle g\rangle}| \not= r_0m$ \mbox{\rm for some} $r_0\mid
  r$}{\Return{{\tt false};}} 
}
\Return{{\tt true};} 
\end{algorithm}

{\bf Remark:} 
(a) The number $M$ of random points of $\Lambda$ tested in the algorithm \tracecycle\
is often a bounded constant (as, for example, in
Theorem~\ref{main-theorem}), but in our analysis we allow it to be as
large as $O(n)$, see (\ref{eq:M}). 
 
(b) The algorithm \tracecycle\ performs $O(n)$ image computations to
check whether $|\lambda^{\langle g \rangle}| = r_0m$, for each 
random point $\lambda$. Thus if $\xi_{rp}$,  $\xi_{rge}$, $\nu_{im}$, are
upper bounds for the costs of producing a random point using 
{\sc RandomPoint}, producing a random group element using 
{\sc RandomGrpElt}, and computing the image of a point of $\Lambda$ under an element of $H$, respectively, then 
the cost of {\sc FindMCycle} is 
\[
O(n\log(\varepsilon^{-1})(\xi_{rge}+M\xi_{rp}+Mn\nu_{im})). 
\]
This cost is modest when compared with the cost
$\binom{n}{k}\nu_{im}$ of computing the product of two permutations
of $\Lambda$ (especially when $k=O(n)$) or the cost of directly
computing the order of any element.

Our  main  result Theorem~\ref{main-theorem} shows  that  these simple  and  inexpensive
procedures provide an effective way to find and identify elements of $S_n$ and $A_n$ containing $m$-cycles from their
actions on $k$-element subsets.

\section{Statement of the main theorem and notation}\label{sec:maintheorem}

In order  to state  our main theorem  we introduce  several parameters
that are  used throughout the paper.  Suppose that the  triple $(G, n,
m)$ satisfies one of the lines of Table~$\ref{tbl:elements}$, and note
that  $r$ is determined  by $G,  n, m$.  The integer  $M$ used  in the
algorithm {\sc FindMCycle} is assumed to satisfy

\begin{equation}\label{eq:M}
4\leq M\leq  \log\left(\frac{9}{8}\right) \frac{n-2}{2}.
\end{equation}

Let  $d(x)$ be the  number of positive divisors  of an integer $x$.
By  \cite[pp.  395-396]  {NivenZuckermanetal91},  $d(x)=x^{o(1)}$.  In
fact, for every $\d>0$, there  is a positive constant $c_\d$ such that

\begin{equation}\label{eq:cdelta}
d(x)\leq c_\d x^\d 
\end{equation}
for all $x$. Choose real numbers $\d$ and $s $ 
satisfying  

\begin{equation}\label{eq:ds}
0<\d<  \min\{1-s, \frac{s}{3}, s-\frac{1}{2} \}\ \mbox{and\ } \frac{1}{2} < s < \frac{M-1}{M}.   
\end{equation}
Further let
\begin{equation}\label{eq:ell}
\ell = \min\left\{ M(1-s), 3-2s - 2\delta,
1+s-3\delta, 2s-2\delta\right\}. 
\end{equation}
By (\ref{eq:ds}), all of $M(1-s) > 1$, 
$3-2s-2\delta> 1$,  $1 + s - 3\delta > 1$ and $2s-2\delta >1$ hold.
Hence $\ell > 1.$
Next we define the constant $a_\d$ by 

\begin{equation}\label{eq:adelta}
a_\delta := \frac{5}{4}\left(1 + 3\frac{c_\delta}{150^{s-\delta}} + 
\left(\frac{c_\delta}{150^{s-\delta}}\right)^2\right),
\end{equation}
with $c_\delta$ as in (\ref{eq:cdelta}), and the constant 
$b_{M,\delta,s}$, which we usually abbreviate to $b_M$,
by

\begin{equation}\label{eq:mu}
b_{M} =
 \left( \frac{33}{8}\right)^M + 
72\, a_{\delta} c_{\delta}^2 r^{2s+2\delta} + 
6.24\,a_{\delta} c_{\delta}^3 r^{3\delta} + 
\frac{c_{\delta}^2}{r^{2s-2\delta}} + 
 \left(\frac{31}{r^{1-s}}\right)^M.
\end{equation}
The theorem involves an `error probability' $\varepsilon$, that is, a
real number satisfying $0<\varepsilon<1$. We  assume that
the integer $n$ satisfies the following inequalities:

\begin{equation}\label{eq:n}
n\geq\left\{\begin{array}{l}
       12(rn)^s + 6\\
	(rn)^s\log n\\
	\left(\frac{10b_M}{\varepsilon}\right)^{1/(\ell-1)}.      
            \end{array}\right.
\end{equation}

\begin{theorem}\label{main-theorem} 
Let $(G, n, m)$ be as in one of the lines of
Table~$\ref{tbl:elements}$, and let 
$k$ be a positive integer satisfying $2\le k \le n/2$.
Suppose that $H$ is  a permutation group
permutation isomorphic to $G$ acting on $k$-element subsets of
$\{1,\ldots,n\}$  $($via the unknown isomorphism $\varphi : H \rightarrow G)$.
Then  the following hold

\medskip
\noindent
{\rm (a)}$\,$ Let $h$ be a uniformly distributed random element of $H$ 
and let $\lambda_1, \ldots,\lambda_4$ be independent, 
uniformly distributed random points of $\Lambda.$ 
 Then there exist  positive
constants $N_0$  and $c$ such that, for $n\ge N_0$,
$$\prob \left( 
\begin{array}{l}
\varphi(h)\mbox{\ contains\ an \ }\\
m\mbox{-cycle}
\end{array}
\left\vert 
\begin{array}{l} 
\mbox{for\  }i=1,\ldots, 4\ \mbox{the\ }\\
h\mbox{-cycle\ on\ }\lambda_i\mbox{\ has\ length}\\
r_im\mbox{\ for\ some\ }r_i\ |\ r 
\end{array}\right. 
\right) > 1 - \dfrac{c}{n^{\tfrac{1}{6}}}.$$

\medskip
\noindent
{\rm (b)}$\,$ 
Let $M$ be an integer satisfying {\rm(\ref{eq:M})}, and let $s,\d$ be
real numbers satisfying {\rm(\ref{eq:ds})}, and $\ell$ as in
{\rm(\ref{eq:ell})}. Then {\sc FindMCycle} is a Monte Carlo Algorithm
which, given as input the  permutation group $H$,
an error probability  $\varepsilon >0 $ and the integer $M$, returns
an output $h$ such that, provided $n$ satisfies {\rm(\ref{eq:n})}, 
\begin{enumerate}
\item[(i)] the probability that $h\in H$ and $\varphi(h)$ contains an $m$-cycle is
  at least 
$1-\varepsilon,$
\item[(ii)] the probability that $h\in H$ and $\varphi(h)$ does not contain
an $m$-cycle is at most $\varepsilon/2,$ and
\item[(iii)]  the probability that $h= \mbox{\sc Fail}$ is at most
$\varepsilon/2.$
\end{enumerate}
\end{theorem}

\begin{notation}\label{notation}
For the rest of the paper we assume that $n,m, r$ and
$G$ are as in one of the lines of Table~\ref{tbl:elements},
noting that $r$ is determined by $G, n, m$. Let $M$ be an integer
satisfying  {\rm(\ref{eq:M})}, let $s,\d$ be real numbers satisfying
{\rm(\ref{eq:ds})}, and let $\ell, c_\d, a_\d$ and $b_M$ be as in
(\ref{eq:ell}), (\ref{eq:cdelta}), (\ref{eq:adelta}) and (\ref{eq:mu})
respectively. 

Let $S_n$  act naturally on $\Omega=\{1,2,\dots,n\}$.
Let  $k$ and $k_0$ be positive integers
satisfying $2 \le k\le n/2$, and $1 \le k_0 \le k$.
A   $k_0$-element subset  of $\Om$ is called a
$k_0$-subset.

We use the notation in Table~\ref{tbl:notation} to describe an element
$g\in  S_n$, where $\gamma_0$ is a $k_0$-subset of $\Omega$. Here we identify a cycle of $g$ with the subset of $\Omega$ it permutes.
\end{notation}

\begin{table}[ht]
\begin{center}
\begin{tabular}{ll}
\toprule
$c_{k_0}(\ga_0,g)$       &length of the $g$-cycle  containing 
$\ga_0$ on $k_0$-subsets\\
$s$-small $g$-cycle&$g$-cycle in $\Om$ of length less than $(rn)^s$\\
$s$-large $g$-cycle&$g$-cycle in $\Om$ of length at least $(rn)^s$\\
$\Delta(g)$ & union of $g$-cycles in $\Om$ whose lengths divide $rm$\\
$\Sigma(g)$ & $\Omega \setminus \Delta(g)$\\
$v$         & cardinality of $\Delta(g)$\\
$u$         & cardinality of $\Sigma(g)$\\
  \bottomrule
\end{tabular}\medskip
\caption{Table for Notation~\ref{notation}}
\label{tbl:notation}
\end{center}
\end{table}

We define in Table~\ref{tbl:subsets} several classes of elements in $G$.
We usually omit mentioning $n$ and $m$ in our notation.
For example, we refer to 
$\calN(n,m)$ (defined in Section~\ref{sec:algoapp})  simply as $\calN$ and to 
$\calN_{good}(G,n,m)$   simply as $\calN_{good}.$

\begin{table}[ht]
\begin{center}
{\openup 3pt
\begin{tabular}{ll}
\toprule
$\calN$&set of all $g\in S_n$ that contain an $m$-cycle\\
$\calN_{good}$ &
set of all $g\in \calN\cap G$ for which $o(g)$ divides $rm$ \\
$\calF$&set of all $g\in G\setminus \calN$ such that $m \mid o(g)$ \\
$\calR$&set of all $g\in \mathcal{F}$ such that $|\Del(g)|\leq 4{(rn)}^s$ \\
$\calS_0$&  set of all $g\in\mathcal{F}$ such that $|\Del(g)|>4{(rn)}^s$ and\\
& all $g$-cycles in $\Del(g)$ are $s$-small\\
$\calS_1^+$&  set of all $g\in\mathcal{F}$ such that
$|\Delta(g)| > 4{(rn)}^s$, exactly one\\
& $g$-cycle $C$ in $\Del(g)$ is $s$-large, and $|\Delta(g)\setminus C| >3 {(rn)}^s$\\
$\calS_1^-$&  set of all $g\in\mathcal{F}$ such that
$|\Delta(g)| > 4{(rn)}^s$, exactly one\\
&$g$-cycle $C$ in $\Del(g)$ is $s$-large, and $|\Delta(g)\setminus C|
\le 3 {(rn)}^s$\\
$\calS_{\ge 2}$&set of all $g\in\mathcal{F}$ such that $|\Del(g)|>4{(rn)}^s$\\
& and at least two $g$-cycles in $\Del(g)$ are $s$-large\\
\bottomrule
\end{tabular}
}
\medskip
\caption{Families of Elements}
\label{tbl:subsets}
\end{center}
\end{table}

\begin{remark}\label{rem:params}{\rm 
(a) The definition of $a_\d$ is not too critical. We simply need 
$a_\delta$ to be greater than or equal to the right hand side of 
(\ref{eq:adelta}) for the values of
$rm$ we are considering, see Remark~\ref{rem:p0} and Lemma~\ref{lem:p0}. For example, if $rm\geq c_\d^{1/(s-\d)}$ then
we may take $a_\d=25/4$.

(b) Currently Equation~(\ref{eq:n}) limits the practical applicability of
Theorem~\ref{main-theorem} severely, but we note that in our analysis we 
allow $k$ to be as large as $n/2$.  
The first two inequalities of~(\ref{eq:n}) imposed on $n$ are due to the
subdivision of the set of permutations of order divisible by
$m$ into disjoint subsets which depend on $s$. We give a uniform proof
that holds for all values of $k$ in the range $2 \le k \ne n/2$. If,
for example, $k$ were bounded as $n$ increases, then several of the
arguments would be simpler and the constraints on $n$ correspondingly
less severe. 

(c) The main constraint forcing $n$ to be very large is the third
inequality in  (\ref{eq:n}).
For example, for our parameter
choice in Theorem~\ref{main-theorem}, namely $M=4, s = \frac{17}{24}$ and
$\delta = \frac{1}{6}$, we 
have $c_\delta \le 138.32$ and, for $n$ large enough, $a_\delta =
\frac{25}{4}.$
In this case we find $b_M > 2\cdot 10^{8}$
and the last inequality of~(\ref{eq:n})  dictates
$n >  3.3\cdot 10^{112}/\varepsilon^{12}$.
Moreover, even though a larger value of $M$ allows
us to choose a smaller value for 
$c_{\delta}$, the choice might result in a smaller value for
$\ell$,
which in turn has  undesired consequences, making $b_M$ larger, and hence 
requiring $n$ to be larger.
}
\end{remark}

\section{Proof of the Main Theorem}
\label{sec:overview}

The proof  of the main theorem,  Theorem~\ref{main-theorem}, relies on
many supporting results.  In this section we subdivide  the proof into
various parts  and show how these  parts are then  brought together to
give a complete proof. The individual parts of the proof are proved in
later sections.  The main idea of  the proof is to divide the elements
of  $S_n$ that  could possibly  be returned  by {\sc  FindMCycle} into
disjoint families,  and to compute the  probability that $\tracecycle$
returns \true\ for an element of each of these families.  The families
of     elements    in    this     subdivision    are     defined    in
Table~\ref{tbl:subsets},  namely  $\calN,  \calR, \calS_0,  \calS_1^+,
\calS_1^-,\calS_{\geq2}$, and  we use the notation  introduced in this
table throughout the paper.

\begin{proof}[Proof of Theorem~$\ref{main-theorem}(b)$]
We prove this theorem
by analysing the algorithm {\sc FindMCycle}. Let $N = \left\lceil
5n\, \log(\tfrac{2}{\varepsilon})\right\rceil.$ 
 A call to 
algorithm {\sc FindMCycle} can terminate in one of
three possible ways:

\begin{enumerate}
\item[(${\mathcal G}$)] For some $i$ with $1 \le i \le N$ the $i$-th iteration of
  the {\bf for}-loop 
returns an element in $\calN.$
We call this a \emph{good} outcome.
\item[(${\mathcal B}$)] For some $i$ with $1 \le i \le N$ the $i$-th
  iteration of   the {\bf for}-loop 
returns an element which is  not in $\calN.$
We call this a \emph{bad} outcome.
\item[(${\mathcal U}$)] The {\bf for}-loop is executed $N$ times and {\sc
  TraceCycle} returns  \false\ for each of the selected random
  elements. In this case the algorithm returns 
{\sc Fail}.
We call this an \emph{ugly} outcome.
\end{enumerate}

Thus to prove the three parts of Theorem~\ref{main-theorem} we must prove 
\[
\Prob( \calG ) \ge 1 - \varepsilon,\quad \Prob( \calB ) \le \varepsilon/2,\quad
\Prob( \calU ) \le \varepsilon/2. 
\]
Clearly any two of these inequalities implies the third. We shall therefore prove only
$\Prob( \calB ) \le \varepsilon/2$ and 
$\Prob( \calU ) \le \varepsilon/2.$
To study these outcomes more closely we define the following events.

\bigskip

\begin{tabular}{ll}

$E_i$ & the $i$-th iteration of the {\bf for}-loop is
executed. Let $g_i$ denote the\\
&  random element selected in the
$i$-th iteration.\\
$G_i$ & event $E_i$ occurs, $g_i\in \calN$ 
 and {\sc TraceCycle}$(g_i)$ = \true\\ 
$B_i$ & event $E_i$ occurs, $g_i\notin \calN$ 
 and {\sc TraceCycle}$(g_i)$ = \true\\
$U_i$ & event $E_i$  occurs
 and {\sc TraceCycle}$(g_i)$ = \false\\
\end{tabular}
\bigskip

Note that $E_i = G_i \dot\cup B_i \dot\cup U_i$ and that $\prob(E_1) = 1$.
Further, for $i > 1$ we have that
\begin{eqnarray}
E_i = U_1 \cap \ldots \cap U_{i-1} = U_{i-1}. \label{eq:Ei}
\end{eqnarray}
Thus
\begin{equation}\label{gbu}
\begin{array}{lll}
\calG &=& G_1 \vee G_2 \vee \ldots \vee G_N \\
\calB &=& B_1 \vee B_2 \vee \ldots \vee B_N\\
\calU &=& U_1 \wedge U_2 \wedge \ldots \wedge U_N = U_N.\\
\end{array}
\end{equation}

\medskip\noindent
\emph{Proof that $\prob(\calU)\le\varepsilon/2$:}\quad  
For a uniformly distributed random element $g\in G$, let
\[
\begin{array}{lll}
p_1 &=& \prob(\tracecycle(g) = \false \mid g \in \calN_{good})\\
p_2 &=& \prob(\tracecycle(g) = \false \mid g \notin \calN_{good})\\ 
\end{array}
\]
and let  $p =  \frac{\rho}{m}p_1 + \frac{m-\rho}{m} p_2$, where 
$\rho := \rho(G,n,m)$ (see Table~\ref{tbl:elements}), the proportion
of elements of $G$ containing an $m$-cycle that have order dividing $rm$.
Note that, since the proportion of elements containing an $m$-cycle in $S_n$ is
$1/m$, we have $\prob(g\in\calN_{good}) =\frac{\rho}{m}$.

Given $E_i$, the event $U_i$ is the
disjoint union of the events $U_{i1}$, that $g_i \in
\calN_{good}$ and 
$\tracecycle(g_i) = \false$, and $U_{i2}$, that $g_i\not\in
\calN_{good}$ and
$\tracecycle(g_i) = \false$. 
Thus
\begin{eqnarray*}
\prob(U_i \mid E_i) &=& \frac{\rho}{m}\prob(\tracecycle(g_i) = \false 
\mid g_i \in \calN_{good}) \\ 
 &+  &
\frac{m-\rho}{m}\prob(\tracecycle(g_i) = \false \mid g_i \notin \calN_{good})\\
&=&\frac{\rho}{m}p_1 + \frac{m-\rho}{m} p_2=p.
\end{eqnarray*}
Note, in
particular, that this probability is independent of $i$.
By (\ref{eq:Ei}) we have $E_i = U_{i-1}$, and hence
$\prob(U_i) = \prob( E_i) \prob(U_i \mid E_i) = \prob(U_{i-1})\cdot p.$ As this is true for
all $i$ with $1 \le i \le N$, we have 

\begin{eqnarray}
\prob(U_i)&=&p^i,\label{eq:pUi} 
\end{eqnarray}
and in particular, 
$$\prob(\calU) = \prob(U_N) = p^N.$$

The required inequality $\prob(\calU) \le \varepsilon/2$ holds whenever
$p^N \le \varepsilon/2.$ We now prove the latter inequality. 
By Proposition~\ref{prop:mcyc} we have
$1-p_1 \ge \left(\frac{n-2}{n}\right)^M.$ Therefore,
\begin{eqnarray} 
 p & \le &   \frac{\rho}{m}p_1 + \frac{m-\rho}{m} = 1 - \frac{\rho}{m}(1-p_1)
 \le  1 - 
\frac{\rho}{n}
\left(\frac{n-2}{n}\right)^M .
\label{eq:p}
\end{eqnarray}
Now $N= \left\lceil 5 n \log( \tfrac{2}{\varepsilon}) 
\right\rceil = \left\lceil \frac{\log( (\varepsilon/2)^{-1})}{(5n)^{-1}} 
\right\rceil$, and so by Lemma~\ref{lem:eps}, $(1-\frac{1}{5n})^N\leq \varepsilon/2$.
Thus $p^N \le \varepsilon/2$ holds if 
$1 -\frac{\rho}{n}\left(\frac{n-2}{n}\right)^M\leq 1-\frac{1}{5n}$, or equivalently,
if $\left(\frac{n}{n-2}\right)^M\leq 5\rho$. Since $\rho \ge 9/40$ (see Table~\ref{tbl:elements}), it is sufficient to prove that 
$\left(\frac{n}{n-2}\right)^M\leq \frac{9}{8}$.
By our assumption, $M\leq \log(\frac{9}{8})\,\frac{n-2}{2}$, and hence
\[
M\log\left(\frac{n}{n-2}\right) = M\log\left( 1+\frac{2}{n-2}\right) \leq M\,\frac{2}{n-2}\leq \log\left(\frac{9}{8}\right)
\]
and exponentiating both sides gives the required inequality. Thus  
$p^N \le \varepsilon/2$ and hence $\prob(\calU) \le \varepsilon/2$ is proved.

\medskip\noindent
\emph{Proof that $\prob(\calB)\le\varepsilon/2$:}\quad  
Recall the definition of $\calB$ in (\ref{gbu}).
Note that, if $\tracecycle(g) = \true$, then $o(g)$ is divisible by $m$.
Thus, by the definition of
$\mathcal{F}$, for a uniformly distributed, random element $g\in G$,
\begin{eqnarray}\label{eq:q}
q &:=& \prob( g\in \calF \mbox{\ and\ } \tracecycle(g) = \true)\\ \nonumber
&=& \prob( g\not\in \calN \mbox{\ and\ } \tracecycle(g) = \true).
\end{eqnarray}
Now, for all $i$ with $1 \le i \le N$, we have that 
\[
\prob(B_i\mid E_i) = \prob(
g_i\not\in \calN \mbox{\ and\ } \tracecycle(g_i) = \true) = q. 
\]
Hence $\prob(B_i) = \prob(E_i) \prob(B_i \mid E_i ) =
\prob(E_i)\, q.$
If $i \ge 2$ then $E_i = U_{i-1}$ by (\ref{eq:Ei}), and so by (\ref{eq:pUi}), 
$
\prob(B_i) = p^{i-1} q.$
Therefore, 
\begin{eqnarray}\label{eq:probB}
\prob(\calB) &=& \sum_{i=1}^N \prob(B_i) 
= 
q \sum_{i=1}^N p^{i-1}\nonumber \\
&=& 
q
\frac{1-p^N}{1-p} < \frac{q}{1-p}.
\end{eqnarray}

The most substantial part of the paper is devoted to finding an upper
bound for $q$.
It follows from Table~\ref{tbl:subsets} that
\begin{eqnarray*}
\mathcal{F} &=&
\calR\,\dot\cup\,\calS_0\,\dot\cup\,\calS_1^+\,\dot\cup\,\calS_1^-\,\dot\cup\,\calS_{\ge 
2}.
\end{eqnarray*}
Hence
$$q = q(\calR) + q(\calS_0) + q(\calS_1^+) + q(\calS_1^-) + q(\calS_{\ge 2}),$$
where
\begin{table}[ht] 
\begin{center}
{\openup 3pt
\begin{equation*}
\begin{array}{lll}
\toprule
q(\calR)  &=&
\prob( g \in \calR \mbox{\ and\ } \tracecycle(g) = \true)  \\
q(\calS_0)&= &
\prob(g \in \calS_0 \mbox{\ and\ } \tracecycle(g) = \true)  \\
q(\calS_1^+)&= &
\prob(g \in \calS_1^+ \mbox{\ and\ } \tracecycle(g) = \true)   \\
q(\calS_1^-)&= &
\prob(g \in \calS_1^- \mbox{\ and\ } \tracecycle(g) = \true)  \\
q(\calS_{\ge 2})&=&
\prob(g \in \calS_{\ge 2} \mbox{\ and\ } \tracecycle(g) = \true).\\
\bottomrule
\end{array}
\end{equation*}
\caption{Subdivision of the probability $q$ of (\ref{eq:q}).}
\label{tbl:subdivision*}
}
\end{center}
\end{table}
We estimate these proportions in Sections~\ref{sec:S0} - \ref{f-1}. 
Recall the definition of $\ell$ in (\ref{eq:ell}), and that $\ell>1$.
Define $b_M(\calR) = 
 \left( \frac{33}{8}\right)^M$ and note that $q(\calR)=\prob( g\in\calR)\cdot\prob(  \tracecycle(g) = \true \mid g\in\calR )\leq \prob(  \tracecycle(g) = \true \mid g\in\calR )$. Then
Proposition \ref{f2bound} gives
\begin{eqnarray*}
q(\calR) &\le&
 \frac{b_M(\calR)}{n^{M(1-s)}} \le
 \frac{b_M(\calR)}{n^{\ell}}.
\end{eqnarray*}
Define $b_M(\calS_0) = 
a_{\delta} c_{\delta}^2 r^{2s+2\delta}72$. Then 
Proposition~\ref{prop:f0} and (\ref{eq:cdelta}) give
\begin{eqnarray*}
q(\calS_0) \le \frac{b_M(\calS_0)}{n^{3-2s-2\delta}}
\le \frac{b_M(\calS_0)}{n^{\ell}}.
\end{eqnarray*}
Define $b_M(\calS_1^+) = 
a_{\delta} c_{\delta}^3 r^{3\delta}6.24.$ Then
Proposition \ref{prop:f1} and (\ref{eq:cdelta}) give
\begin{eqnarray*}
q(\calS_1^+) \le \frac{b_M(\calS_1^+)}{n^{1+s-3\delta}}
\le \frac{b_M(\calS_1^+)}{n^\ell}.
\end{eqnarray*}
Define $b_M(\calS_{\ge 2}) =  c_{\delta}^2 r^{2\delta-2s}.$ Then
Proposition \ref{prop:Sge2} gives  
\begin{eqnarray*}
q(\calS_{\ge 2}) \le \frac{b_M(\calS_{\ge 2})}{n^{2s-2\delta}}
\le \frac{b_M(\calS_{\ge 2})}{n^{\ell}}.
\end{eqnarray*}
Define $b_M(\calS_1^-) = \left(\frac{31}{r^{1-s}}\right)^M.$
Then Proposition \ref{prop:f1bound}(b) yields
\begin{eqnarray*}
q(\calS_1^-) \le \frac{b_M(\calS_1^-)}{n^{M(1-s)}}
 \le \frac{b_M(\calS_1^-)}{n^{\ell}}.
\end{eqnarray*}
Thus by (\ref{eq:mu}),
\begin{eqnarray*}
\lefteqn{b_M(\calR) +
b_M(\calS_0) + b_M(\calS_1^+) + b_M(\calS_{\ge 2}) + 
b_M(\calS_1^-)}\\ 
&\le& 
 \left( \frac{33}{8}\right)^M + 
a_{\delta} c_{\delta}^2 r^{2s+2\delta}72 + 
a_{\delta} c_{\delta}^3 r^{3\delta}6.24 + 
\frac{c_{\delta}^2}{r^{2s-2\delta}} + 
 \left(\frac{31}{r^{1-s}}\right)^M\\ 
&=&b_M\\
\end{eqnarray*}
and
\begin{equation}\label{eq:q2}
q \le \frac{b_M}{n^{\ell}}.
\end{equation}

\begin{remark}\label{rem:nineq}
{\rm
We make  a critical observation that the argument up to this point relies only on the first two inequalities of (\ref{eq:n}), and does not depend on the third inequality of (\ref{eq:n}).
} 
\end{remark}
 
By (\ref{eq:q2}) and the inequalities~(\ref{eq:probB}) and~(\ref{eq:p}), we have that 
\begin{eqnarray*}
\prob(\calB)&<& \frac{q}{1-p} \\
&\leq &\frac{b_M}{n^{\ell} \frac{\rho}{n} 
\left(\frac{n-2}{n}\right)^M }\\
&=&\frac{b_M}{\rho} \left(\frac{n}{n-2}\right)^M  \frac{1}{n^{\ell-1}}.\\ 
\end{eqnarray*}
We showed above that $\left(\tfrac{n}{n-2}\right)^M\leq\tfrac{9}{8}\leq 5\rho$. Thus $\prob(\calB)<\tfrac{5b_M}{n^{\ell-1}}$. By assumption  $n\geq \left(\frac{10b_M}{\varepsilon}\right)^{1/(\ell-1)}$ and so this is at most $\varepsilon/2$. Hence  $\prob(\calB) <  \varepsilon/2.$ 
\end{proof}

The proof of Theorem~\ref{main-theorem}(a) requires a short argument
applying Theorem~\ref{main-theorem}(b). 

\begin{proof}[Proof of Theorem~$\ref{main-theorem}(a)$]
We use the algorithm 
{\sc TraceCycle} with 
$M=4$. 
Note first that the probability that a random element 
$h\in H$ corresponds to an element $g\in G$ containing an $m$-cycle,
given that the $h$-cycles containing four
random $k$-subsets $\lambda_1, \ldots,
\lambda_4$ all have lengths of the form $r_im$ with $r_i \mid r$, is 
$\prob(g\in \calN \mid  \tracecycle(g) = \true ).$ 
Recall the definition of $q$ in (\ref{eq:q}).
Then
\begin{eqnarray*}
\lefteqn{\prob(g\in \calN \mid  \tracecycle(g) = \true )}\\
&=& \frac{\prob(g\in\calN\ \mbox{and}\ \tracecycle(g) = \true)}{\prob(\tracecycle(g) = \true)} \\
&=& \frac{\prob(\tracecycle(g) = \true) - q}{ \prob(\tracecycle(g) = \true)}\\
&=& 1 - \frac{q}{ \prob(\tracecycle(g) = \true)} \\
&\ge& 1 - \frac{q}
{ \prob(g \in \calN_{good} \mbox{\ and\ } \tracecycle(g) = \true) } \\
&=& 1 - \frac{q}
{ \prob(\tracecycle(g) = \true \mid g \in \calN_{good})\cdot \prob( g\in
  \calN_{good}) }. \\
\end{eqnarray*}

Set  $s=\frac{5}{8}$, 
 $\delta=\tfrac{1}{24}$ and 
let $\ell = 1 + \frac{1}{6}.$  Note that 
$\ell = \min\left\{ M(1-s),\right.$ $\left. 3-2s - 2\delta,
1+s-3\delta, 2s-2\delta\right\}$, so in particular the inequalities 
(\ref{eq:ds}) and (\ref{eq:ell}) all hold.   
We choose $N_0$ to be the least natural number for which
inequality~(\ref{eq:n}) holds.
 Hence the inequality (\ref{eq:M}) 
holds and in particular also $12 (rn)^s + 6 \le n$ 
and  $(rn)^s \log(n) \le n.$

Inequality~(\ref{eq:q2}) holds by Remark~\ref{rem:nineq}, so we have
$q \le
\frac{b_4}{n^{\ell}},$ where, since $M=4$, the constant $b_4$ given by
(\ref{eq:mu}), satisfies 
\begin{eqnarray*}
b_4 &\le& 
 \left( \frac{33}{8}\right)^4 + 
72\,a_{\delta} c_{\delta}^2 r^{5/3} + 
6.24\,a_{\delta} c_{\delta}^3 r^{3/8} + 
\frac{ c_{\delta}^2}{r^{7/6}} + 
 \left(\frac{31}{r^{7/24}}\right)^4.\\
\end{eqnarray*}

By Proposition~\ref{prop:mcyc} we have that
$ \prob( \tracecycle(g) = \true \mid g \in \calN_{good} ) \ge
\left(\frac{n-2}{n} \right)^4.$ 
Also, by Equation~(\ref{eq:star}), $\prob( g\in   \calN_{good})  =
\frac{\rho(G,n,m)}{m}.$  
 Hence, using 
$n \ge N_0,$ and the displayed inequality above, we have
\begin{eqnarray*}
\lefteqn{\prob(g\in \calN \mid  \tracecycle(g) = \true )}\\
&\geq& 1 - \frac{b_4}{n^{1 + \frac{1}{6}}}
  \left(\frac{n}{n-2}\right)^4 \frac{m}{\rho(G, n, m)}
\\
&\ge& 1 - \left( \frac{N_0}{N_0-2}\right)^4 \frac{
b_4}{\rho(G,n,m)} \cdot\frac{1}{n^{\frac{1}{6}}} 
= 1 - \frac{ c}{n^{\frac{1}{6}}},
\\
\end{eqnarray*}
where $c= \left( \frac{N_0}{N_0-2}\right)^4 \frac{
b_4}{\rho(G,n,m)}.$
\end{proof}

\section{Preliminaries}\label{sec:prelim}

It is useful to collect together some of the arithmetic facts we use
in the rather delicate estimations in the remaining sections. 

\begin{lemma}\label{lem:d}
Let $n,m,r$ be as in one of the lines of Table~{\rm\ref{tbl:elements}},
and let $d$ be a divisor of $rm$ with $d \le n.$ Then either $d=m$, or
$d \le 2m/7$, or $r,d$ are as in Table~$\ref{tbl:rd}$.
\begin{table}[ht]
\begin{center}
\begin{tabular}{lclll}
\toprule
$r$ & \multicolumn{4}{c}{$d$} \\
\midrule
\vspace*{.2cm}
$1$ & & $\frac{m}{3}$ & $\frac{m}{2}$& \\
\vspace*{.2cm}
$2$ & & $\frac{m}{3}$ &  $\frac{2m}{5}$ & $ \frac{2m}{3}$\\
$3$ & &$\frac{3m}{5}$ & $\frac{3m}{7}$&  \\
\bottomrule
\end{tabular}\\
\vspace*{.2cm}
\caption{possibilities for $r$ and $d$}
\label{tbl:rd}
\end{center}
\end{table}
In particular, either $d\le 2m/7$ or $d$ is one of at most $3$ different
divisors of $rm$ greater than $2m/7$ and in the latter case $d \le 2m/3 \le 2n/3.$
\end{lemma}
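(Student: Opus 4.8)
The plan is to prove Lemma~\ref{lem:d} by a direct case analysis on the value of $r\in\{1,2,3\}$, exploiting the fact that in every line of Table~\ref{tbl:elements} the integer $m$ is very close to $n$ (indeed $m\ge n-6$) while $rm\le 3n$, so a divisor $d$ of $rm$ with $d\le n$ is heavily constrained. First I would fix notation: write $d\mid rm$, $d\le n$, and set $t:=rm/d$, so $t$ is a positive integer and $d=rm/t$. Since $d\le n$ and $m\ge n-6$, we get $rm/t\le n\le m+6$, hence $t\ge rm/(m+6)$; for $n$ large (or even just $m\ge 7$) this forces $t\ge r$ when $d>2m/7$, and more precisely $t$ cannot be too small. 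The case $t=r$ gives exactly $d=m$, the first alternative. So the real work is to show that for $t>r$ (equivalently $d<m$, i.e.\ $d\le rm/(r+1)$ when $r\nmid t$, but one must be careful because $t$ need not be a multiple of anything) the divisor $d$ is either $\le 2m/7$ or lies in the short list of Table~\ref{tbl:rd}.

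The key step is to enumerate, for each $r$, the divisors $d=rm/t$ of $rm$ with $2m/7<d<m$. This is equivalent to $r<t<7r/2$, i.e.\ $t\in\{r+1,\dots,\lceil 7r/2\rceil-1\}$, and additionally $t\mid rm$. For $r=1$: $t\in\{2,3\}$, giving $d\in\{m/2,m/3\}$, and one needs $2\mid m$ resp.\ $3\mid m$ — but in fact for Lemma~\ref{lem:d} we only need that these are the \emph{only possible} values $>2m/7$, so we simply list them. For $r=2$: $t\in\{3,4,5,6\}$ with $t\mid 2m$; $t=3$ needs $3\mid 2m$ i.e.\ $3\mid m$, giving $d=2m/3$; $t=4$ needs $4\mid 2m$ i.e.\ $2\mid m$, but $d=m/2$; $t=5$ needs $5\mid 2m$ i.e.\ $5\mid m$, giving $d=2m/5$; $t=6$ needs $6\mid 2m$ i.e.\ $3\mid m$, giving $d=m/3$. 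For $r=3$: $t\in\{4,5,6,7,8,9,10\}$ with $t\mid 3m$; sorting through divisibility ($t=5\Rightarrow d=3m/5$, $t=7\Rightarrow d=3m/7$, $t=9\Rightarrow d=m/3$, $t=6\Rightarrow d=m/2$, and $t=4,8,10$ require $2$ or $4$ or $5$ to divide $m$ and give $3m/4,3m/8,3m/10$ — but wait, $3m/8$ and $3m/10$ may be $\le 2m/7$? $3/8>2/7$ and $3/10<2/7$, so $t=10$ gives $d\le 2m/7$ and is absorbed; $t=8$ gives $3m/8>2m/7$). I would then cross-check each surviving candidate against the specific lines of the table — e.g.\ in Line~6 ($A_n$, $r=3$, $m=n-3$, $n\equiv2,4\pmod 6$) the possible $d$'s are governed by which small primes divide $m$ — and discard those that cannot occur, arriving exactly at Table~\ref{tbl:rd}. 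For the final "in particular" clause, I note that each row of Table~\ref{tbl:rd} has at most three entries, all of the form $am/b$ with $a/b\le 2/3$, whence $d\le 2m/3\le 2n/3$ since $m\le n$.

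The main obstacle I anticipate is purely bookkeeping: one must be scrupulous about which divisibility conditions on $m$ are actually consistent with the congruence constraints on $n$ (hence on $m=n-j$) imposed in each line of Table~\ref{tbl:elements}, so that the list in Table~\ref{tbl:rd} is neither too long (listing impossible $d$'s) nor too short. In particular one has to verify that a candidate like $d=3m/4$ for $r=3$ either genuinely never arises (because $4\nmid m$ under the relevant congruence on $n$) or is otherwise excluded; similarly for $d=3m/8$. I would handle this by tabulating, for each line, the value $m\bmod{\mathrm{lcm}(2,3,5,7)}$ forced by the congruence on $n$, and reading off exactly which of $2,3,5,7$ can divide $m$; this both confirms Table~\ref{tbl:rd} and shows every other divisor of $rm$ below $n$ is at most $2m/7$. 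The inequality manipulations (e.g.\ $d\le 2m/7$ versus $d>2m/7$ translating to bounds on $t$, and the use of $m\ge n-6$ to rule out very small $t$) are routine and I would not belabour them.
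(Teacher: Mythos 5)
Your proposal follows essentially the same route as the paper: the paper writes $d = r_0\frac{m}{j}$ with $r_0\mid r$ and $j\mid m$ (using $\gcd(r,m)=1$) and turns $d>2m/7$ into $7r_0>2j$, which is your cofactor enumeration $t=rm/d$ with $r<t<7r/2$ in different clothing, and both arguments then discard candidates using the parity of $m$ (odd whenever $r\in\{2,3\}$) and $\gcd(m,6)=1$ when $r=3$, as read off from Table~\ref{tbl:elements}. One arithmetic slip: $3/10>2/7$, so the case $t=10$, $r=3$ is \emph{not} absorbed into $d\le 2m/7$ as you claim --- but it is vacuous anyway, since $10\mid 3m$ would force $2\mid m$, contradicting $\gcd(m,6)=1$, and the tabulation of $m \bmod \mathrm{lcm}(2,3,5,7)$ that you propose in your final paragraph would catch exactly this.
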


\begin{proof}
We have $d = r_0 \frac{m}{j}$, where $r_0$ divides $r$ and $j$ divides
$m$. If $j=1$ then $d=m$ since $2m \ge 2(n-6) > n$. So assume $j \ge
2.$ Assume also that  $d > 2m/7,$ or equivalently $7r_0 > 2j.$ If $m$
is even, then (see Table~\ref{tbl:elements}) $r=1$. Hence $r_0=1$ and
$j \le 2.$ Thus $d=m/2$ or $m/3$ as in Table~\ref{tbl:rd}. So assume
now that $m$ is odd, so $j\ge 3.$ If $j=3$ then we have the examples
$(r,d) = (1,\frac{m}{3}), (2,\frac{m}{3}), (1,\frac{2m}{3})$ in
Table~\ref{tbl:rd} and no others since if $r=3$ then (see
Table~\ref{tbl:elements}) $\gcd(m,6) = 1$.  Now assume that $j\ge 5.$
Then $r_0 > 1$ and we find $(r,d) = (2,\frac{2m}{5}),
(3,\frac{3m}{5}), (3,\frac{3m}{7})$ in Table~\ref{tbl:rd} and no
others (since $\gcd(m,6)=1$ when $r=3.$).
\end{proof}

The next result follows from the fact that $\log(1-p)> -p$ for $0<p<1$.

\begin{lemma}\label{lem:eps}
Let $\varepsilon, p$ be real numbers such that $0<\varepsilon<1$ and $0<p<1$.
Set
\[
N(\varepsilon,p):=\left\lceil \frac{\log(\varepsilon^{-1})}{p}\right\rceil.
\]
If $m\geq N(\varepsilon,p)$ then  $(1-p)^m\leq \varepsilon$.
\end{lemma}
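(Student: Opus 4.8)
The plan is to take logarithms and reduce the claimed inequality $(1-p)^m\le\varepsilon$ to a lower bound on $m$, and then apply the elementary estimate recalled just before the statement, namely $\log(1-p)\le -p$ for $0<p<1$ (equivalently $-\log(1-p)\ge p$; indeed $\log(1-p)=-\sum_{j\ge 1}p^j/j\le -p$).

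First I would record the signs: since $0<1-p<1$ we have $\log(1-p)<0$, and since $0<\varepsilon<1$ we have $\log(\varepsilon^{-1})>0$. Dividing the inequality $m\log(1-p)\le\log\varepsilon$ through by the negative quantity $\log(1-p)$ reverses it, and both sign changes (replacing $\log\varepsilon$ by $-\log(\varepsilon^{-1})$ in the numerator and $\log(1-p)$ by $-\log(1-p)$ in the denominator) leave the resulting fraction positive, so that
\[
(1-p)^m\le\varepsilon \iff m\log(1-p)\le\log\varepsilon \iff m\ge\frac{\log(\varepsilon^{-1})}{-\log(1-p)}.
\]
Thus it suffices to verify $m\ge\dfrac{\log(\varepsilon^{-1})}{-\log(1-p)}$.

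Next I would invoke $-\log(1-p)\ge p>0$, which gives $\dfrac{1}{-\log(1-p)}\le\dfrac{1}{p}$ and hence
\[
\frac{\log(\varepsilon^{-1})}{-\log(1-p)}\le\frac{\log(\varepsilon^{-1})}{p}\le\left\lceil\frac{\log(\varepsilon^{-1})}{p}\right\rceil=N(\varepsilon,p)\le m,
\]
where the last inequality is the hypothesis. Combining this with the equivalence displayed above yields $(1-p)^m\le\varepsilon$, as required.

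There is essentially no obstacle here; the only point needing care is correctly tracking the direction of the inequality when dividing by $\log(1-p)<0$, and checking that the monotonicity step $\tfrac{1}{-\log(1-p)}\le\tfrac1p$ is applied to positive quantities so that it is legitimate.
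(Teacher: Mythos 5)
Your proof is correct and is essentially the argument the paper intends: the paper gives no formal proof, merely noting that the lemma follows from the elementary logarithmic inequality, and your write-up fills in exactly those details. (Incidentally, the paper states the inequality as $\log(1-p)>-p$, which has the direction reversed; your version $\log(1-p)\le -p$ is the correct one and is what the argument actually requires.)
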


\begin{lemma}\label{lem:simple}
Let $s$ be a real number with $\frac{1}{2} < s < 1$ and $n, r, t$
positive integers 
such that $12{(rn)}^s + 6 \le n$.
Then
\begin{itemize}
\item[(i)] $m^s/n < n^s/n < {(rn)}^s/n < 1/12$.
\item[(ii)] $n \ge 156.$
\item[(iii)] $2{(rn)}^s-t > \tfrac{24-t}{12} (rn)^s.$
\item[(iv)] if $s = 2/3$ then $n \ge 1746.$
\end{itemize}
\end{lemma}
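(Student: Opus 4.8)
The final statement to prove is Lemma~\ref{lem:simple}, which is a collection of four elementary arithmetic facts that follow from the hypothesis $12(rn)^s+6\le n$ together with $\tfrac12<s<1$. The plan is to treat each item in turn, extracting what is needed from the single inequality $12(rn)^s+6\le n$.

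First I would prove (i). Since $m\le n$ (as $m=n,n-1,\dots,n-6$ in all lines of Table~\ref{tbl:elements}) and $r\ge 1$, we have $m^s\le n^s\le (rn)^s$, giving the first two inequalities directly. For the last, rewrite the hypothesis as $(rn)^s\le \tfrac{n-6}{12}<\tfrac{n}{12}$, so dividing by $n$ gives $(rn)^s/n<1/12$. For (ii), note that $(rn)^s\ge n^s$, so the hypothesis forces $n\ge 12n^s+6$, i.e. $n-6\ge 12n^s\ge 12n^{1/2}$ (using $s>1/2$ and $n>1$ so that $n^s>n^{1/2}$ — one must check $n$ is large enough that $n^s>n^{1/2}$, which holds once $n\ge 2$; a quick separate check rules out $n\le 155$). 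Concretely, setting $x=\sqrt n$ the inequality $x^2-12x-6\ge 0$ holds iff $x\ge 6+\sqrt{42}$, so $n\ge (6+\sqrt{42})^2=78+12\sqrt{42}\approx 155.77$, hence $n\ge 156$. I should be a little careful to justify that $12n^s\ge 12\sqrt n$ really does follow — this needs $n^{s-1/2}\ge 1$, i.e. $n\ge 1$, which is fine, but I should note that the bound $n^s\ge\sqrt n$ is what makes the estimate valid, and that the worst case is $s$ close to $1/2$.

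For (iii), observe that $(rn)^s\ge n^s$ and combine with (i) or more simply with (ii): from (i), $(rn)^s<n/12$, but the cleaner route is $2(rn)^s-t=2(rn)^s-\tfrac{t}{12}\cdot 12$; since $(rn)^s\ge n^s\ge\sqrt n\ge\sqrt{156}>12$ by (ii), we get $12<(rn)^s$, hence $\tfrac{t}{12}\cdot 12 = t < \tfrac{t}{12}(rn)^s$ and therefore $2(rn)^s-t>2(rn)^s-\tfrac{t}{12}(rn)^s=\tfrac{24-t}{12}(rn)^s$. Finally for (iv), repeat the argument of (ii) with $s=2/3$: the hypothesis gives $n\ge 12n^{2/3}+6$, so with $y=n^{1/3}$ we need $y^3-12y^2-6\ge 0$; one checks this fails at $y=12$ (gives $-6$) and holds at $y=13$ ($169-6>0$ wait, $13^3-12\cdot13^2-6 = 2197-2028-6=163>0$), so $n^{1/3}\ge 13$ hence $n\ge 13^3=2197$ — but the claim is only $n\ge 1746$, so actually $y=12$ gives $n\ge 1728$ and we need the next integer-ish bound; more precisely the real root of $y^3-12y^2-6$ is just above $12$, say $y_0\approx 12.04$, giving $n\ge y_0^3\approx 1746$. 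So I would locate the real root carefully and conclude $n\ge\lceil y_0^3\rceil=1746$.

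The only mildly delicate point — the "main obstacle", such as it is — is making the numerical thresholds in (ii) and (iv) tight and correctly rounded: one must solve $x^2-12x-6=0$ and $y^3-12y^2-6=0$ and round up the resulting value of $n$ correctly, being careful that the inequality $12(rn)^s+6\le n$ is only known to hold, not the reverse, so the bounds on $n$ are genuine lower bounds. Everything else is routine manipulation of the single hypothesis together with $m\le n$, $r\ge 1$, and $s>1/2$. I would present (i) first since (ii)–(iv) all lean on the observation $(rn)^s\ge n^s\ge\sqrt n$ that is implicit in its proof.
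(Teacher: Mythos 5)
Your proof is correct and follows essentially the same route as the paper's: part (i) is read off directly from the hypothesis, part (ii) reduces to the quadratic $x^2-12x-6\ge 0$ in $x=\sqrt{n}$ via $n^s\ge\sqrt{n}$, part (iii) uses $(rn)^s>12$ exactly as the paper uses $n^s>12$, and part (iv) is the numerical cubic computation the paper dismisses as ``by calculator.'' The only cosmetic remark is that the first two inequalities in (i) are in fact equalities when $m=n$ or $r=1$ (a slip present in the paper's own proof as well), which affects nothing downstream.
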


\begin{proof} (i) This follows directly from $12 {(rm)}^s < 
12{(rn)}^s < 12{(rn)}^s + 6 \le n.$
(ii) As $s > 1/2$ and $r \ge 1$ we have 
$12 \sqrt{n} + 6 \le 12 \sqrt{rn}+6 < 12{(rn)}^s + 6 \le n.$ An easy
  calculation shows that this   implies $n \ge 156.$  
(iii) Note that $n\ge 156$ implies $n^s > n^{1/2} \ge \sqrt{156}> 12$ and
so $2{(rn)}^s -t  = (2r^s-\frac{t}{n^s}) n^s  >
(2r^s-\frac{t}{12}) n^s =\frac{24r^s-t}{12} n^s \ge\frac{24-t}{12} r^sn^s.$
(iv) By calculator.
\end{proof}

The next inequalities are easily verified.

\begin{lemma}\label{lem:ns}
Let $x\in \R$ with $x > 12.$ Then 
\begin{itemize}
\item[(a)] $x \left( \frac{1}{2}\right)^x <\frac{1}{4 x},$ and
\item[(b)]$\left( \frac{11}{12}\right)^x <\frac{5}{ x}.$
\end{itemize}
\end{lemma}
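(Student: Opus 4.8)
The plan is to handle each part by reducing the claimed inequality to a clean monotonicity statement for an auxiliary function and then checking a single value at $x=12$.

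For part (a), I would rewrite $x(\tfrac12)^x < \tfrac{1}{4x}$ in the equivalent form $2^x > 4x^2$, i.e.\ $\phi(x) := x\ln 2 - 2\ln x > \ln 4$. Since $\phi'(x) = \ln 2 - 2/x > 0$ whenever $x > 2/\ln 2$, and in particular for $x \ge 12$, the function $\phi$ is strictly increasing on $[12,\infty)$. Because $\phi(12) = \ln(2^{12}/12^2) = \ln(4096/144) > \ln 4$, it follows that $\phi(x) > \ln 4$ for every $x > 12$, which is exactly (a).

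For part (b), I would rewrite $(\tfrac{11}{12})^x < \tfrac5x$ as $\psi(x) := x(\tfrac{11}{12})^x < 5$. The logarithmic derivative is $\psi'(x)/\psi(x) = \tfrac1x - \ln(\tfrac{12}{11})$. Using the elementary bound $\ln(1+u) > u - u^2/2$ with $u = \tfrac1{11}$ gives $\ln(\tfrac{12}{11}) > \tfrac{1}{11} - \tfrac{1}{242} = \tfrac{21}{242} > \tfrac1{12}$, so $\psi'(x) < 0$ for all $x \ge 12$ and $\psi$ is strictly decreasing there. Hence $\psi(x) < \psi(12) = 12(\tfrac{11}{12})^{12}$ for $x > 12$, and it remains only to verify $12(\tfrac{11}{12})^{12} < 5$, equivalently $(\tfrac{12}{11})^{12} > \tfrac{12}{5}$. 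This I would get from the binomial expansion $(1+\tfrac1{11})^{12} \ge 1 + \tfrac{12}{11} + \binom{12}{2}\tfrac1{121} = \tfrac{319}{121} > \tfrac{12}{5}$.

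The argument is essentially routine; the only place needing a little care is the numerical bookkeeping in part (b) — estimating $\ln(12/11)$ from below to get the sign of $\psi'$, and $(12/11)^{12}$ from below to close the final inequality — and I would keep this entirely within rational arithmetic using the series bound for $\ln(1+u)$ and the binomial theorem, so that no decimal computation is required.
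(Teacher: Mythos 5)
Your proof is correct. The paper offers no argument for this lemma (it is introduced with ``The next inequalities are easily verified''), so there is nothing to compare against; your verification --- reducing (a) to $2^x>4x^2$ via the increasing function $x\ln 2-2\ln x$, and (b) to the decreasing function $x(11/12)^x$ together with the checks $\ln(12/11)>21/242>1/12$ and $(12/11)^{12}\ge 319/121>12/5$ --- is a clean and complete way to supply the omitted details, and all the rational-arithmetic estimates check out.
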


%
%

For the estimates in our last arithmetic result Lemma~\ref{lem:int1},
we first restate how to estimate sums via integrals.

\begin{lemma}\label{lem:inte}
Let $a,b\in\mathbb{Z}$ with $a<b$, and let 
$f(x)$ be a function defined on the interval
$[a-1,b+1]$, satisfying one of the lines of Table~\ref{tbl:f}. 
Then 
$$\sum_{x=a}^{b} f(x)\le \int_{a-\delta}^{b+\varepsilon} f(t) {\rm d}t.
$$
\end{lemma}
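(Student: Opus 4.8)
The plan is to reduce the whole statement to one elementary comparison: if $x$ is an integer and $f$ is non-increasing on $[x-1,x]$, then $f(t)\ge f(x)$ throughout that interval, so $f(x)\le\int_{x-1}^{x}f(t)\,{\rm d}t$; dually, if $f$ is non-decreasing on $[x,x+1]$, then $f(x)\le\int_{x}^{x+1}f(t)\,{\rm d}t$. For each line of Table~\ref{tbl:f} I would assign to every integer $x\in\{a,\dots,b\}$ one such unit subinterval of $[a-\delta,b+\varepsilon]$ on which the relevant monotonicity of $f$ holds, check that the chosen subintervals have pairwise disjoint interiors, and then sum the resulting inequalities: since $f\ge 0$, the sum of the integrals over these disjoint subintervals is at most $\int_{a-\delta}^{b+\varepsilon}f(t)\,{\rm d}t$, which is exactly the claimed bound.

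First I would dispose of the monotone lines. If $f$ is non-increasing on $[a-1,b]$, take for $x\in\{a,\dots,b\}$ the interval $[x-1,x]$; these partition $[a-1,b]$, so $\sum_{x=a}^{b}f(x)\le\int_{a-1}^{b}f(t)\,{\rm d}t$, giving the inequality for $(\delta,\varepsilon)=(1,0)$ and, after adding the non-negative term $\int_{b}^{b+1}f(t)\,{\rm d}t$, also for $(\delta,\varepsilon)=(1,1)$. The non-decreasing line is the mirror image, using $[x,x+1]$ and the partition of $[a,b+1]$. Then I would handle any line where $f$ is non-increasing on $[a-1,c]$ and non-decreasing on $[c,b+1]$ for some $c\in[a-1,b+1]$ (with $(\delta,\varepsilon)=(1,1)$): set $j=\lfloor c\rfloor$; for $a\le x\le j$ use $[x-1,x]\subseteq[a-1,c]$ (where $f$ is non-increasing), and for $j+1\le x\le b$ use $[x,x+1]\subseteq[c,b+1]$ (where $f$ is non-decreasing). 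The intervals of the first family lie in $[a-1,j]$, those of the second in $[j+1,b+1]$, all have pairwise disjoint interiors and all lie in $[a-1,b+1]$, so summing the estimates and using $f\ge 0$ yields $\sum_{x=a}^{b}f(x)\le\int_{a-1}^{b+1}f(t)\,{\rm d}t$.

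The argument is short and there is no genuine obstacle; the only point requiring care is the index bookkeeping in the last line, namely verifying that every integer in $\{a,\dots,b\}$ receives exactly one subinterval, that these subintervals have disjoint interiors, and that the split point $j=\lfloor c\rfloor$ keeps both families inside $[a-1,b+1]$. One should also record the trivial degenerate cases — $c=a-1$ or $c=b+1$ (one family empty, reducing to a monotone line) and $c\in\Z$ coinciding with a partition point (the index $x=c$ may be assigned to either family) — but each of these is immediate.
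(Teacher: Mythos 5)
Your proof is correct. The paper states Lemma~\ref{lem:inte} without proof, treating it as a standard restatement of the integral test for sums; your unit-interval comparison (assigning $[x-1,x]$ or $[x,x+1]$ to each integer according to the local monotonicity, checking disjointness, and using $f\ge 0$ to absorb the gap $[j,j+1]$ in the bitonic case) is exactly the canonical argument the authors are implicitly invoking, and your bookkeeping at the split point $j=\lfloor c\rfloor$ is sound.
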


\begin{table}[ht]
\begin{center}
\begin{tabular}{p{9cm}clll}
\toprule
conditions on $f$ & & $\delta$ & $\varepsilon$ \\
\midrule
increasing in $[a,b+1]$ & & $0$ & $1$ \\
decreasing in $[a-1,b]$ & & $1$ & $0$ \\
non-negative in $[a-1,b+1]$ and for some $c\in(a,b)$
decreasing in  $[a-1,c]$ and increasing in $[c,b+1]$ & & $1$ & $1$ \\
\bottomrule
\end{tabular}
\vspace*{.3cm}
\caption{Conditions on $f$}\label{tbl:f}
\end{center}
\end{table}

%

\begin{lemma}\label{lem:int1}
Let $a,c \in \R^+$ and $n\in\Z^+$ with $n>a>c+2\ge 3$, and let $t, \ell \in \Z^+$ with
$t \ge 2$ and  $t \ge \ell.$ 
Then, summing over integers $x$ in the interval $(a,n]$,
\begin{eqnarray*}
\sum_{a < x \le n} \frac{x^{t}}{(x-c)^\ell} & < &  
\sum_{i=0}^{\ell-2} \binom{t}{i} 
\frac{c^{t-i}(a-1-c)^{i+1-\ell}}{\ell-i-1} \\
&&
\!\!\!\!\!\!\!\!\!\!\!\!\!\! + \binom{t}{\ell-1} c^{t+1-\ell} \log (n)
 + \sum_{i=\ell}^{t} \binom{t}{i} 
\frac{c^{t-i}(n+1-c)^{i+1-\ell}}{i+1-\ell}.\\\end{eqnarray*}
\end{lemma}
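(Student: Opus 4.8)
The plan is to prove the inequality by the standard device of bounding the sum $\sum_{a<x\le n} x^t/(x-c)^\ell$ by an integral and then evaluating that integral via the binomial expansion of $x^t = ((x-c)+c)^t$. First I would substitute $y = x-c$, so that the summand becomes $(y+c)^t/y^\ell$ with $y$ ranging over integers in $(a-c, n-c]$; expanding $(y+c)^t = \sum_{i=0}^t \binom{t}{i} c^{t-i} y^i$ and dividing by $y^\ell$ gives a sum of terms $\binom{t}{i} c^{t-i} y^{i-\ell}$. The key observation is that as a function of $y > 0$ the term $y^{i-\ell}$ is decreasing when $i < \ell$, equals $1/y$ when $i = \ell-1$ is the boundary case, and is increasing when $i > \ell$; this is exactly the trichotomy that lets me apply Lemma~\ref{lem:inte} (with the first two lines of Table~\ref{tbl:f}) termwise.

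The key steps in order: (1) perform the shift $y = x-c$ and expand $(y+c)^t/y^\ell$ into $\sum_{i=0}^t \binom{t}{i} c^{t-i} y^{i-\ell}$; (2) for each $i$ with $0 \le i \le \ell-2$, apply the "decreasing" case of Lemma~\ref{lem:inte} to $\sum y^{i-\ell}$, bounding it by $\int_{a-1-c}^{n-c} t^{i-\ell}\,dt = \frac{(a-1-c)^{i+1-\ell}-(n-c)^{i+1-\ell}}{\ell-i-1}$, and then discard the negative $-(n-c)^{i+1-\ell}/(\ell-i-1)$ term to get the claimed bound $\frac{c^{t-i}(a-1-c)^{i+1-\ell}}{\ell-i-1}$ after multiplying by $\binom{t}{i}c^{t-i}$; (3) for $i = \ell-1$, the term is $\binom{t}{\ell-1}c^{t+1-\ell}\sum 1/y$, and bounding $\sum_{y} 1/y$ by $\int 1/t\,dt$ over a suitable range gives a logarithm — here I need the upper endpoint $n-c < n$, so $\log(n-c) < \log n$, and the lower contribution can be absorbed or bounded crudely to yield the stated $\binom{t}{\ell-1}c^{t+1-\ell}\log n$; (4) for each $i$ with $\ell \le i \le t$, apply the "increasing" case of Lemma~\ref{lem:inte} to bound $\sum y^{i-\ell}$ by $\int_{a-c}^{n+1-c} t^{i-\ell}\,dt = \frac{(n+1-c)^{i+1-\ell}-(a-c)^{i+1-\ell}}{i+1-\ell}$, discard the negative term, and multiply by $\binom{t}{i}c^{t-i}$ to get $\frac{\binom{t}{i}c^{t-i}(n+1-c)^{i+1-\ell}}{i+1-\ell}$; (5) sum the three groups of contributions over $i$ to obtain exactly the right-hand side of the claimed inequality, noting that the strictness of the final inequality comes from discarding the strictly negative endpoint terms (this uses $n > a$, $a > c+2$, so that all the bases $a-1-c$, $a-c$, $n-c$, $n+1-c$ are positive and the discarded terms are genuinely nonzero).

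The hypotheses are used as follows: $a > c+2$ guarantees $a-1-c > 1 > 0$ and $a-c > 2 > 0$, so the negative powers are well-defined and the integral bounds from Lemma~\ref{lem:inte} apply on intervals contained in $(0,\infty)$; $c \ge 1$ (from $c > c+2 - 2 \ge 1$, i.e. $c+2 \ge 3$) keeps $c^{t-i} \ge 0$ sensible; $t \ge 2$ and $t \ge \ell$ ensure the index ranges $0 \le i \le \ell-2$, $i = \ell-1$, and $\ell \le i \le t$ partition $\{0,1,\dots,t\}$ without overlap (when $\ell = 1$ the first group is empty, when $\ell = t+1$ would be excluded by $t \ge \ell$, etc.); and $\ell \ge 1$ makes $i = \ell-1 \ge 0$ a valid index.

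The main obstacle I anticipate is the bookkeeping for the middle term $i = \ell-1$: one must be careful that the integral comparison for the harmonic-type sum $\sum 1/y$ is applied over the correct interval and that the resulting bound is genuinely $\le \binom{t}{\ell-1}c^{t+1-\ell}\log n$ rather than $\log(n-c)$ plus a (possibly positive) constant from the lower endpoint; since $n-c < n$ we have $\log(n-c) < \log n$, which should leave enough slack to absorb the $O(1)$ discrepancy given $n > a > c+2 \ge 3$, but this is the one place where the claimed clean form requires a small estimate rather than a bare integral comparison. Everything else is a routine, if somewhat lengthy, application of the binomial theorem and Lemma~\ref{lem:inte} term by term, with the strict inequality coming for free from the discarded negative endpoint contributions.
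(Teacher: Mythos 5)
Your proposal is correct and follows essentially the same route as the paper: binomial expansion of $x^t=((x-c)+c)^t$, integral comparison via Lemma~\ref{lem:inte}, and discarding the (strictly positive) endpoint contributions $-(a-1-c)^{i+1-\ell}/(i+1-\ell)$ and $-\log(a-1-c)$, the latter being negative precisely because $a-1-c>1$ and $c\ge 1$. The only difference is that you apply Lemma~\ref{lem:inte} termwise to the monomials $(x-c)^{i-\ell}$ using the two monotone cases of Table~\ref{tbl:f}, whereas the paper applies it once to the whole function $x^t/(x-c)^\ell$ via the third (unimodal) case, after checking that this function decreases then increases with turning point $tc/(t-\ell)$; your variant sidesteps that unimodality check at the cost of $t+1$ invocations of the lemma, and both yield the stated bound.
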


\begin{proof}

Note first that if $t > \ell$ the function $f(x) = \frac{x^t}{(x-c)^\ell}$ is 
decreasing on $(a, \frac{tc}{t-\ell}]$ and increasing on 
$[\frac{tc}{t-\ell},n]$, while if $t=\ell$ then $f(x)$ is decreasing
on $(a, n]$. 
In either case, by  Lemma~\ref{lem:inte} 
we have $\sum_{a < x \le n} f(x) < \int_{a-1}^{n+1} f(x) dx.$ Now
\begin{eqnarray*}
\lefteqn{\int_{a-1}^{n+1} \frac{x^t}{(x-c)^\ell} dx
 = \int_{a-1-c}^{n+1-c} \frac{(y+c)^t}{y^\ell} dy
= \int_{a-1-c}^{n+1-c} y^{-\ell} \sum_{i=0}^t\binom{t}{i}y^ic^{t-i}
 dy}\\
&=&  \sum_{i=0}^t\binom{t}{i} c^{t-i}\int_{a-1-c}^{n+1-c} y^{i-\ell}dy\\
&=&
\left[
\sum_{0\le i \le t, i \neq \ell -1} \binom{t}{i} c^{t-i}
 \frac{y^{i+1-\ell}}{i+1-\ell}
+ \binom{t}{\ell-1} c^{t+1-\ell} \log y \right]_{y=a-1-c}^{n+1-c}\\
&=& \sum_{0\le i \le t, i \neq \ell-1} \binom{t}{i} c^{t-i}
\frac{(n+1-c)^{i+1-\ell}-(a-1-c)^{i+1-\ell}}{i+1-\ell}\\
&& +  \binom{t}{\ell-1} c^{t+1-\ell} (\log (n+1-c) -\log(a-1-c))\\
&<&\sum_{i=0}^{\ell-2} \binom{t}{i} c^{t-i}
\frac{(a-1-c)^{i+1-\ell}}{\ell-i-1}
 +  \binom{t}{\ell-1} c^{t+1-\ell} \log (n) \\
& &  + \sum_{i=\ell}^{t} \binom{t}{i} c^{t-i}
\frac{(n+1-c)^{i+1-\ell}}{i+1-\ell}\\
\end{eqnarray*}

\end{proof}

\section{Binomial inequalities and partitions}
\label{sec:binomial}

In this section we prove a result about partitions that will be needed 
in Sections~\ref{f-1} and \ref{sec:tracing}. As preparation, we prove an
inequality about certain binomial coefficients.

\begin{lemma}\label{lem:binom}
Let $a$ be an integer such that $a>1$, and let $c,\ell$ be integers such 
that $1\leq \ell< c$. Then
\[
\binom{ca-1}{a-1}\binom{c}{\ell}\leq \binom{ca}{\ell a}.
\]
\end{lemma}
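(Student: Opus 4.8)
The plan is to prove the equivalent inequality
\[
\binom{ca-1}{a-1}\binom{c}{\ell}\leq\binom{ca}{\ell a}
\]
by rewriting both sides as ratios of factorials and reducing to a product of elementary fractions, each of which I will bound by $1$. First I would use $\binom{ca-1}{a-1}=\frac{a}{ca}\binom{ca}{a}=\frac{1}{c}\binom{ca}{a}$, so the left-hand side becomes $\frac{1}{c}\binom{ca}{a}\binom{c}{\ell}$. Thus it suffices to show
\[
\binom{ca}{a}\binom{c}{\ell}\leq c\binom{ca}{\ell a}.
\]
Writing everything out, $\binom{ca}{a}=\frac{(ca)!}{a!\,(ca-a)!}$ and $\binom{ca}{\ell a}=\frac{(ca)!}{(\ell a)!\,(ca-\ell a)!}$, so after cancelling the common factor $(ca)!$ the claim becomes
\[
\frac{(\ell a)!\,(ca-\ell a)!}{a!\,(ca-a)!}\cdot\binom{c}{\ell}\leq c,
\]
i.e. $\binom{c}{\ell}\cdot\frac{(\ell a)!}{a!}\cdot\frac{(ca-\ell a)!}{(ca-a)!}\leq c$.

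Next I would handle the two falling/rising factorial quotients. Since $\ell\geq 1$, we have $\frac{(\ell a)!}{a!}=(a+1)(a+2)\cdots(\ell a)$, a product of $(\ell-1)a$ factors each at most $\ell a$; more usefully, I would compare it termwise against a product built from $\frac{(ca-a)!}{(ca-\ell a)!}=(ca-\ell a+1)(ca-\ell a+2)\cdots(ca-a)$, which is a product of the same number $(\ell-1)a$ of factors. Pairing the $j$-th factor $a+j$ of the numerator with the $j$-th factor $ca-\ell a+j$ of the denominator for $j=1,\dots,(\ell-1)a$, each ratio $\frac{a+j}{ca-\ell a+j}$ is at most $1$ precisely when $a+j\leq ca-\ell a+j$, i.e. $a\leq (c-\ell)a$, which holds since $c-\ell\geq 1$. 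Hence $\frac{(\ell a)!}{a!}\cdot\frac{(ca-\ell a)!}{(ca-a)!}\leq 1$, and it remains only to check $\binom{c}{\ell}\leq c$.

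That last inequality is where a little care is needed: $\binom{c}{\ell}\leq c$ is \emph{not} true for all $\ell$ (e.g. $\binom{4}{2}=6>4$), so the bound $\frac{(\ell a)!}{a!}\cdot\frac{(ca-\ell a)!}{(ca-a)!}\leq 1$ must actually be strengthened. I expect this to be the main obstacle, and the fix is to not throw away the slack in the factorial quotient: the ratio $\frac{a+j}{ca-\ell a+j}$ is genuinely smaller than $1$ by a definite amount, and accumulating these gives $\frac{(\ell a)!}{a!}\cdot\frac{(ca-\ell a)!}{(ca-a)!}\leq \frac{1}{\binom{c-1}{\ell-1}}$ or similar, after which $\binom{c}{\ell}/\binom{c-1}{\ell-1}=c/\ell\leq c$ closes the argument. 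Concretely I would group the $(\ell-1)a$ paired factors into $\ell-1$ blocks of $a$ consecutive indices and show each block contributes a factor at most $\frac{j'}{c-\ell+j'}$-type bound, telescoping to $\frac{(\ell-1)!\,(c-\ell)!}{(c-1)!}=1/\binom{c-1}{\ell-1}$; multiplying by $\binom{c}{\ell}$ yields exactly $c/\ell\leq c$. Alternatively, and perhaps more cleanly, I would prove the whole statement by induction on $\ell$ (base case $\ell=1$: $\binom{ca-1}{a-1}\cdot c\leq\binom{ca}{a}$, which is the identity $c\binom{ca-1}{a-1}=\binom{ca}{a}$ with equality), using the Pascal-type recursion $\binom{ca}{\ell a}=\sum$ and $\binom{c}{\ell}=\binom{c-1}{\ell-1}+\binom{c-1}{\ell}$ together with a Vandermonde-style splitting of $\binom{ca}{\ell a}$ — this sidesteps the delicate termwise bookkeeping entirely.
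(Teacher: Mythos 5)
Your reduction to the inequality $\binom{c}{\ell}\cdot\frac{(\ell a)!}{a!}\cdot\frac{(ca-\ell a)!}{(ca-a)!}\leq c$ is correct, and you are right that the crude bound "factorial ratio $\leq 1$" cannot suffice. However, the strengthened bound you then propose is false: for $a=2$, $c=4$, $\ell=2$ one has $\frac{(\ell a)!}{a!}\cdot\frac{(ca-\ell a)!}{(ca-a)!}=\frac{4!}{2!}\cdot\frac{4!}{6!}=\frac{2}{5}$, which exceeds $\frac{1}{\binom{c-1}{\ell-1}}=\frac{1}{3}$; equivalently, your claimed conclusion "$\leq c/\ell$" would read $\binom{4}{2}\cdot\frac{2}{5}=\frac{12}{5}\leq 2$, which is false (the lemma itself holds here: $42\leq 70$). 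The slip is in the per-block bound. Writing the product as $\prod_{j=1}^{(\ell-1)a}\frac{a+j}{ca-\ell a+j}$ and taking block $i$ to be the indices $j=(i-1)a+j'$ with $1\le j'\le a$, the $j'$-th factor is $\frac{ia+j'}{(c-\ell+i-1)a+j'}$, and the correct uniform bound on it is $\frac{i+1}{c-\ell+i}$ (attained at $j'=a$; the verification reduces to $(c-\ell-1)(a-j')\ge0$), not $\frac{i}{c-\ell+i}$. Since $\frac{i+1}{c-\ell+i}\le1$, each block contributes at most $\frac{i+1}{c-\ell+i}$, and the product telescopes to $\prod_{i=1}^{\ell-1}\frac{i+1}{c-\ell+i}=\frac{\ell!\,(c-\ell)!}{(c-1)!}=\frac{c}{\binom{c}{\ell}}$ --- exactly the bound needed, with no room to spare. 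With that one correction your argument closes, so the gap is a genuine but repairable off-by-one in the telescoping, not a wrong strategy.

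For comparison, the paper proves the lemma by induction on $\ell$ for fixed $a,c$, after first using the symmetries $\binom{c}{\ell}=\binom{c}{c-\ell}$ and $\binom{ca}{\ell a}=\binom{ca}{(c-\ell)a}$ to reduce to $\ell\le\lfloor c/2\rfloor$. The base case $\ell=1$ is the identity you noted, and the inductive step compares $\frac{c-\ell}{\ell+1}$ with the product $\prod_{i=0}^{a-1}\frac{ca-\ell a-i}{\ell a+a-i}$, whose first factor equals $\frac{c-\ell}{\ell+1}$ and whose remaining factors are at least $1$ precisely because $c\ge2\ell+1$. That symmetry reduction is essential: a ratio-based induction on $\ell$ fails outright for $\ell>c/2$. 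Your alternative sketch (Pascal plus a Vandermonde splitting) omits this reduction and is too vague to assess, so if you pursue the inductive route you should follow the symmetric-reduction version; otherwise the corrected telescoping argument above is a clean, non-inductive alternative that handles all $1\le\ell<c$ uniformly.
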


\begin{proof}
The proof is by induction on $\ell$, for fixed $c, a$. Since 
$\binom{c}{\ell}=
\binom{c}{c-\ell}$ and $\binom{ca}{\ell a}=\binom{ca}{(c-\ell)a}$, it is
sufficient to prove this for $1\leq \ell\leq \lfloor c/2\rfloor$. Suppose
first that $\ell=1$. Here it is straightforward to check that
\[
\binom{ca-1}{a-1}\binom{c}{1}= \binom{ca}{a}.
\]
Now suppose that $1\leq\ell< \lfloor c/2\rfloor$ and that the inequality
holds for $\ell$. Then, using induction we have
\[
\binom{ca-1}{a-1}\binom{c}{\ell +1}=\binom{ca-1}{a-1}\binom{c}{\ell}\frac
{c-\ell}{\ell+1}\leq \binom{ca}{\ell a}\frac{c-\ell}{\ell+1}.
\]
This latter quantity is at most $\binom{ca}{(\ell+1)a}$ if and only if
\begin{equation}\label{bin1}
\frac{c-\ell}{\ell+1}\cdot\frac{1}{(\ell a)!(ca-\ell 
a)!}\leq\frac{1}{(\ell a+a)!(ca-\ell a-a)!}
\end{equation}
and this is equivalent to
\[
\frac{c-\ell}{\ell+1}\leq \frac{ca-\ell a}{\ell a+a}\cdot \frac{ca-\ell 
a-1}{
\ell a+a-1}\dots\frac{ca-\ell a -a+1}{\ell a+1}.
\]
Now the first factor on the right hand side is equal to 
$(c-\ell)/(\ell+1)$,
and each of the other factors is at least 1 since $c\geq 2\ell+1$. Thus the
inequality (\ref{bin1}) holds, and so the induction proof is complete.
\end{proof}

\begin{lemma}\label{lem:Z}
\begin{enumerate}
\item[(a)] For  $2 \le k \le d < n$ we have
\[\binom{d}{k}\le\left(\frac{d}{n}\right)^k \binom{n}{k}\]
and moreover, if $d\le \alpha n$ for some $\alpha < 1$ then
\[\frac{\binom{d}{k}}{\binom{n}{k}}\le \alpha^{k-1} 
\frac{d-k+1}{n-k+1}\le \alpha^k.\] 
\item[(b)] For $2\le k \le 2n/3$ we have
$$\binom{\lfloor n/2\rfloor}{\lfloor k/2\rfloor} < 2\binom{n}{k} 
\left( \frac{3k}{4n}\right)^{\lceil k/2\rceil}.$$
\end{enumerate}
\end{lemma}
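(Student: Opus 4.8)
### Proof proposal for Lemma~\ref{lem:Z}

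\textbf{Part (a).}
The plan is to compare the two binomial coefficients factor by factor.
Write
\[
\frac{\binom{d}{k}}{\binom{n}{k}} = \prod_{i=0}^{k-1}\frac{d-i}{n-i}.
\]
First I would observe that for $0\le i < k \le d < n$ each factor satisfies $\frac{d-i}{n-i}\le \frac{d}{n}$, since $\frac{d-i}{n-i}$ is increasing in $i$ precisely when $d<n$ — equivalently $d(n-i)\ge n(d-i)$ reduces to $i(n-d)\ge 0$, wait, that gives the reverse; so instead note $\frac{d-i}{n-i}\le\frac{d}{n}\iff n(d-i)\le d(n-i)\iff -ni\le -di\iff d\le n$, which holds. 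Hence each of the $k$ factors is at most $d/n$, giving the first inequality. For the refinement, assume $d\le\alpha n$. Bound the last factor ($i=k-1$) by $\frac{d-k+1}{n-k+1}$ exactly, and bound each of the first $k-1$ factors ($0\le i\le k-2$) by $d/n\le\alpha$; this yields $\frac{\binom{d}{k}}{\binom{n}{k}}\le\alpha^{k-1}\frac{d-k+1}{n-k+1}$. Finally $\frac{d-k+1}{n-k+1}\le\frac{d}{n}\le\alpha$ (the first step is the same monotonicity fact with $i=k-1$), giving the bound $\alpha^k$. None of this is an obstacle; it is a direct computation.

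\textbf{Part (b).}
This is the substantive part. Set $n' = \lfloor n/2\rfloor$ and $k' = \lfloor k/2\rfloor$, and write $k = 2k'$ or $k = 2k'+1$ according to parity. The idea is to apply Lemma~\ref{lem:binom} with the choice $c=2$, $a = n'$ (or a suitable value), $\ell = k'$, which gives $\binom{2n'-1}{n'-1}\binom{2}{k'}\le\binom{2n'}{k'n'}$ — but this is only useful when $k'\le 1$, so Lemma~\ref{lem:binom} as literally stated with $c=2$ is too weak; instead one wants it with larger $c$. The right move is to bound $\binom{n'}{k'}$ in terms of $\binom{n}{k}$ directly. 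I would proceed as follows. When $n$ and $k$ are both even, $\binom{n'}{k'}=\binom{n/2}{k/2}$, and I claim $\binom{n/2}{k/2}\le\binom{n}{k}\bigl(\tfrac{k/2}{n}\bigr)^{k/2}\cdot(\text{correction})$; more cleanly, use the identity/inequality comparing $\binom{n}{k}$ with $\binom{n/2}{k/2}^2$ via Vandermonde: $\binom{n}{k}\ge\binom{n/2}{k/2}^2$ when $k$ is even (since $\binom{n}{k}=\sum_j\binom{n/2}{j}\binom{n/2}{k-j}\ge\binom{n/2}{k/2}^2$). Hence $\binom{n/2}{k/2}\le\sqrt{\binom{n}{k}}$, but that loses too much. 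The sharper route: $\binom{n}{k}\ge\binom{n/2}{k/2}\binom{n/2}{k/2+\epsilon}$ for the two middle-ish terms, and then I estimate the ratio $\binom{n/2}{k/2}/\binom{n/2}{\lceil k/2\rceil}$ type quantities, and use part (a) with $\alpha = \frac{3k}{4n}$-flavoured bounds. Concretely I expect the cleanest argument is: apply Lemma~\ref{lem:binom} with $a=n'$, $c=\lceil n/n'\rceil$ won't be an integer in general, so instead I would split on the parity of $n$ and of $k$ into (up to) four cases, and in each case realise $\binom{n'}{k'}$ as dominated by a single binomial coefficient of the form $\binom{ca}{\ell a}$ divided by $\binom{ca-1}{a-1}$ via Lemma~\ref{lem:binom}, then bound $\binom{ca-1}{a-1}$ below by a power of $2$ (central binomial estimate, $\binom{2a-1}{a-1}\ge 2^{2a-2}/\sqrt{a}$ or the cruder $\binom{ca-1}{a-1}\ge c^{a}/(\text{poly})$), and collect the exponential factors into $\bigl(\tfrac{3k}{4n}\bigr)^{\lceil k/2\rceil}$ with the leading constant $2$ absorbing the polynomial and rounding losses.

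\textbf{Main obstacle.}
The hard part will be part (b): getting the exponent exactly $\lceil k/2\rceil$ with base exactly $\tfrac{3k}{4n}$ and only a factor of $2$ out front, uniformly for all $2\le k\le 2n/3$ and both parities of $n$ and $k$. The parity bookkeeping (four cases) and the need to control floor/ceiling discrepancies without leaking more than a factor of $2$ is where the care goes; the $\tfrac34$ in the base (rather than a naive $\tfrac12$) is exactly what one gains from the constraint $k\le 2n/3$, so the estimate $\frac{k/2}{n/2-k/2}\le\frac{3k}{4n}\cdot\frac{?}{?}$-style inequalities must be tracked tightly. I would handle this by first proving the even–even case cleanly via Lemma~\ref{lem:binom} (with $c=2$, $\ell=k/2$, $a=n/2$, which gives $\binom{n-1}{n/2-1}\binom{2}{k/2}\le\binom{n}{k}$ only for $k/2\le 1$ — so in fact I must use part (a) here instead, with $d=n/2$, $\alpha=\tfrac12$, obtaining $\binom{n/2}{k/2}\le(\tfrac12)^{k/2-1}\binom{n}{k/2}\cdot(\dots)$, then relate $\binom{n}{k/2}$ to $\binom{n}{k}$), and then reduce the odd cases to the even case by the standard one-step identities $\binom{n}{k}=\binom{n-1}{k-1}+\binom{n-1}{k}$, paying an explicit small factor each time that the factor-$2$ slack accommodates.
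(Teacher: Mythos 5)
Part (a) of your proposal is correct and is essentially the paper's argument (factor-by-factor comparison using $\frac{d-i}{n-i}\le\frac{d}{n}$, isolating the $i=k-1$ factor for the refinement). Part (b), however, is a genuine gap: you never produce a completed argument, only a sequence of candidate routes, and the concrete ones fail quantitatively. Lemma~\ref{lem:binom} is simply the wrong tool here (it compares $\binom{ca-1}{a-1}\binom{c}{\ell}$ with $\binom{ca}{\ell a}$, so with $a=n/2$ it only reaches $\binom{n}{k}$ when $k$ is a multiple of $n/2$); the Vandermonde bound $\binom{n/2}{k/2}\le\sqrt{\binom{n}{k}}$ would require the lower bound $\binom{n/2}{k/2}\ge\tfrac12(4n/3k)^{k/2}$, which the elementary estimate $\binom{a}{b}\ge(a/b)^b$ misses by a factor $(4/3)^{k/2}$; and your fallback via part (a) with $d=n/2$, $\alpha=\tfrac12$ followed by $\binom{n}{k/2}\le(3k/n)^{k/2}\binom{n}{k}$ yields base $\tfrac{3k}{2n}$, which is off from the claimed $\tfrac{3k}{4n}$ by a factor $2^{k/2}$ — fatal at the endpoint $k=2n/3$. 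The factor-of-$2$ slack in the statement cannot absorb losses that grow exponentially in $k$.

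The paper's actual proof is a single direct ratio computation that avoids all parity case analysis. With $n_0=\lfloor n/2\rfloor$, $k_0=\lfloor k/2\rfloor$, one writes
\[
\frac{\binom{n_0}{k_0}}{\binom{n}{k}}=\prod_{i=0}^{k_0-1}\frac{n_0-i}{n-i}\cdot\prod_{j=k_0}^{k-1}\frac{k+k_0-j}{n-j},
\]
where the second product comes from distributing the leftover factor $k(k-1)\cdots(k_0+1)$ over the remaining denominators $n-j$. Since $k+k_0\le n$ (this is exactly where $k\le 2n/3$ enters), the same monotonicity fact as in part (a) bounds the two products by $(\tfrac12)^{k_0}$ and $(\tfrac{3k}{2n})^{k-k_0}$ respectively; then $k-k_0=\lceil k/2\rceil$ and $2^{k}\ge\tfrac12\cdot 4^{\lceil k/2\rceil}$ convert $\frac{3^{\lceil k/2\rceil}}{2^{k}}(\tfrac{k}{n})^{\lceil k/2\rceil}$ into $2(\tfrac{3k}{4n})^{\lceil k/2\rceil}$, with the case $k=2$ checked separately for strictness. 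The point you were missing is that the base $\tfrac34$ is not extracted from a single estimate but is the geometric mean, over all $k$ factors, of the bound $\tfrac12$ on roughly half of them and $\tfrac{3k}{2n}$ on the other half.
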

\begin{proof}
Every part of the proof depends on the following observation:

\noindent Fact 1: For $0\le i \le t \le n$ with $i < n$ we have
$\frac{t-i}{n-i} \le \frac{t}{n}$ with strict inequality if $t < n.$

For $(a)$ observe that
$$\frac{\binom{v}{k}}{\binom{n}{k}}= \prod_{i=0}^{k-1}\frac{v-i}{n-i}
\le \prod_{i=0}^{k-1}\frac{v}{n} = \left(\frac{v}{n}\right)^k.$$

If $d \le \alpha n$ for some $\alpha < 1$ then
\begin{eqnarray*}
\frac{\binom{d}{k} }{\binom{n}{k}} &=& 
\left(\prod_{i=0}^{k-2} \frac{d-i}{n-i}\right) \cdot \frac{d-k+1}{n-k+1}
\le 
\left( \prod_{i=0}^{k-2} \frac{\alpha n-i}{n-i}\right) \cdot \frac{d-k+1}{n-k+1}.\\
\end{eqnarray*}
Now, again by Fact~1,
 $(\alpha n -i)/(n-i) \le \alpha$ 
and $\frac{d-k+1}{n-k+1} \le \frac{d}{n} \le \alpha,$
and therefore 
$$\frac{\binom{d}{k} }{\binom{n}{k}} 
\le {\alpha^{k-1}}  \frac{d-k+1}{n-k-1} \le \alpha^k.
$$

For $(b)$ let $n_0 = \lfloor n/2\rfloor$ and $k_0 = \lfloor k/2 \rfloor$. 
Note then that
\begin{eqnarray*}
\frac{\binom{n_0}{k_0}}{\binom{n}{k}} &=& 
\frac{n_0(n_0-1) \cdots (n_0-k_0+1)}{n(n-1)\cdots (n-k+1)} k(k-1) \cdots (k_0+1)\\
&=&  \prod_{i=0}^{k_0-1}\frac{n_0-i}{n-i} \cdot \prod_{j=k_0}^{k-1} \frac{k+k_0-j}{n-j}.
\end{eqnarray*}
Now $k + k_0 \le 2n/3 + n/3 \le n.$ 
Applying Fact~1 with $t = n_0$ to the first product and with
$t = k+k_0$ to the second, we obtain
\begin{eqnarray*}
\frac{\binom{n_0}{k_0}}{\binom{n}{k}} &\le& 
 \prod_{i=0}^{k_0-1}\frac{n_0}{n} \cdot \prod_{j=k_0}^{k-1} \frac{k+k_0}{n}\\
&=&  \left(\frac{n_0}{n}\right)^{k_0} \cdot \left( \frac{k+k_0}{n} \right)^{k-k_0}\\
&\le&  \left(\frac{1}{2}\right)^{k_0} \cdot \left( \frac{3k}{2n} \right)^{k-k_0}\\
&=&  \frac{3^{\lceil k/2\rceil}}{2^k} \cdot \left(\frac{k}{n} \right)^{\lceil k/2\rceil}\\
&\le &  \frac{2\cdot 3^{\lceil k/2\rceil}}{4^{\lceil k/2\rceil}} \cdot \left(\frac{k}{n} \right)^{\lceil k/2\rceil}.\\
\end{eqnarray*}
Note that the first inequality is strict if either $k_0 \ge 2$ or $k-1
> k_0$, that is, if $k \ge 3.$ If $k = 2$ then $\binom{n_0}{k_0} =
\lfloor n/2\rfloor,$ while 
$2\binom{n}{k}\left( \frac{3k}{4n} \right)^{\lceil k/2\rceil} =
\frac{3}{2}(n-1) > \lfloor n/2\rfloor.$  Thus (b) is proved for all $k$.

\end{proof}

\begin{lemma}\label{lem:ZZ}
Let $d, k, t$ be positive integers and $a >0$ such that $k \le d$ and
$\frac{t}{d-k+1} \le a.$ Then
\begin{eqnarray*}
\lefteqn{
(d+t)(d+t-1)\dots (d+t-k+1) }\\
&<&d(d-1) \dots (d-k+1) (1+\frac{(1+a)^kt}{a(d-k+1)}).
\end{eqnarray*}
\end{lemma}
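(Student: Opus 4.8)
The plan is to write the left-hand side as a product of $k$ factors, each of the form $d+t-i$ for $0\le i\le k-1$, and compare it factor-by-factor with the corresponding factor $d-i$ of the product $d(d-1)\cdots(d-k+1)$. Writing $d+t-i = (d-i)\left(1+\frac{t}{d-i}\right)$, we get
\[
(d+t)(d+t-1)\cdots(d+t-k+1) = d(d-1)\cdots(d-k+1)\prod_{i=0}^{k-1}\left(1+\frac{t}{d-i}\right).
\]
So it suffices to bound the product $\prod_{i=0}^{k-1}\bigl(1+\frac{t}{d-i}\bigr)$ from above by $1+\frac{(1+a)^k t}{a(d-k+1)}$.

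The key step is to estimate this product. First I would note that each factor satisfies $1+\frac{t}{d-i}\le 1+\frac{t}{d-k+1}\le 1+a$ for $0\le i\le k-1$, using $d-i\ge d-k+1$ and the hypothesis $\frac{t}{d-k+1}\le a$. Expanding the product, $\prod_{i=0}^{k-1}\bigl(1+\frac{t}{d-i}\bigr) = 1 + \sum_{\emptyset\neq S\subseteq\{0,\dots,k-1\}}\prod_{i\in S}\frac{t}{d-i}$. For a nonempty subset $S$ of size $j$, the term $\prod_{i\in S}\frac{t}{d-i}$ is at most $\frac{t}{d-k+1}\cdot(1+a)^{j-1}\le \frac{t}{d-k+1}(1+a)^{k-1}$, since one factor is bounded by $\frac{t}{d-k+1}$ and the remaining $j-1\le k-1$ factors are each at most $1+a$. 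There are $2^k-1$ nonempty subsets, so the sum over all nonempty $S$ is at most $(2^k-1)\frac{t(1+a)^{k-1}}{d-k+1} \le \frac{2^k t (1+a)^{k-1}}{d-k+1} \le \frac{(1+a)^k t}{a(d-k+1)}\cdot\frac{2^k a}{1+a}$. Hmm — this last bound is only good enough when $2^k a\le 1+a$, which need not hold.

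A cleaner route, which I expect to be the actual argument, is to bound the sum by a geometric-type series directly: group the terms by size and use $\sum_{j=1}^{k}\binom{k}{j}\bigl(\frac{t}{d-k+1}\bigr)^j$, but pull out one factor of $\frac{t}{d-k+1}$ from each term to get
\[
\prod_{i=0}^{k-1}\left(1+\frac{t}{d-i}\right) \le 1 + \frac{t}{d-k+1}\sum_{j=1}^{k}\binom{k}{j}a^{j-1} = 1 + \frac{t}{a(d-k+1)}\bigl((1+a)^k-1\bigr) < 1 + \frac{(1+a)^k t}{a(d-k+1)},
\]
where the first inequality uses that each size-$j$ term $\prod_{i\in S}\frac{t}{d-i}$ has one factor at most $\frac{t}{d-k+1}$ and $j-1$ factors at most $a$, and the binomial theorem gives $\sum_{j=1}^k\binom{k}{j}a^{j-1} = \frac{(1+a)^k-1}{a}$. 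Since the resulting bound is strictly less than $1+\frac{(1+a)^k t}{a(d-k+1)}$, multiplying back through by $d(d-1)\cdots(d-k+1)$ gives the claimed strict inequality. The only mild subtlety is the strictness, which is immediate from dropping the $-1$ in $(1+a)^k-1$ (valid since $a>0$), so I do not anticipate any real obstacle here — the whole lemma is an elementary product estimate.
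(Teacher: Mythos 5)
Your proposal is correct and follows essentially the same route as the paper: the paper first bounds the product by $\bigl(1+\tfrac{t}{d-k+1}\bigr)^k$ and then expands binomially, extracting one factor of $x=\tfrac{t}{d-k+1}$ from each term and bounding the rest by powers of $a$ to get $1+\tfrac{x}{a}(1+a)^k$, which is exactly your key computation $\sum_{j=1}^{k}\binom{k}{j}a^{j-1}=\tfrac{(1+a)^k-1}{a}$. Your first attempt (the $2^k-1$ crude count) indeed does not suffice, but you correctly discard it, and the final argument, including the observation that strictness comes from dropping the $-1$, is sound.
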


\begin{proof}
Note first that
\begin{eqnarray*}
\lefteqn{(d+t)(d+t-1)\dots (d+t-k+1)}\\
 &=  &
d(1+\frac{t}{d})(d-1)(1+\frac{t}{(d-1)})\dots (d-k+1) (1+\frac{t}{(d-k+1)}) \\
& \le & d (d-1) \dots (d-k+1) ( 1 + \frac{t}{(d-k+1)})^k.  \\
\end{eqnarray*}
Set $x = \frac{t}{d-k+1},$ so $0 < x \le a$. Then 
\begin{eqnarray*}
(1+x)^k&=&\sum_{j=0}^k \binom{k}{j} x^j = 
1+x\sum_{j=1}^k\binom{k}{j}x^{j-1}\\
&\le&1+x\sum_{j=1}^k\binom{k}{j}a^{j-1}\\
&\le&1+\frac{x}{a}\sum_{j=0}^k\binom{k}{j}a^{j}\\
&=&1+\frac{x}{a}(1 + a)^k.
\end{eqnarray*}

\end{proof}

Now we state and prove the result on partitions.
\begin{proposition}\label{npk}
Let $\calU$ be a finite set of size $u>1$, and let $\calP$ be a 
partition of
$\calU$ in which all parts have size at least $2$. For $2 \le k_0\le u$, let
$N_\calP(k_0)$ denote the number of $k_0$-subsets of $\calU$ that are 
unions of
parts of $\calP$. Then
$N_{\calP}(k_0)\leq \binom{\lfloor u/2\rfloor}{\lfloor k_0/2
\rfloor}$, and moreover, if $k_0$ is odd and $u$ is even, then $u\geq 4$ and
$N_{\calP}(k_0)\leq\binom{(u-2)/2}{(k_0-1)/2}$.
In particular, $N_{\calP}(k_0) = 1$
if $k_0 = u$ and
$N_{\calP}(k_0) \leq \frac{1}{u-1}\binom{u}{k_0}$ otherwise.
\end{proposition}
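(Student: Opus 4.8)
The plan is to reduce everything to a counting bound for $k_0$-subsets of $\calU$ that are unions of parts of the partition $\calP$, and then to invoke the binomial inequalities already established in Lemma~\ref{lem:binom} and Lemma~\ref{lem:Z}(b). First I would set up the following injection-style bound: each $k_0$-subset $X$ of $\calU$ that is a union of parts of $\calP$ is determined by the sub-collection of parts it contains. Since every part has size at least $2$, a union of $j$ parts has size at least $2j$, so $X$ uses at most $\lfloor k_0/2\rfloor$ parts. To get the clean bound $N_\calP(k_0)\le\binom{\lfloor u/2\rfloor}{\lfloor k_0/2\rfloor}$ I would argue that contracting each part of $\calP$ to a single point turns unions of parts into ordinary subsets; but the parts have varying sizes, so the honest route is to compare with the worst case where $\calP$ consists of $\lfloor u/2\rfloor$ parts of size $2$ (plus possibly one part of size $3$ when $u$ is odd). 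This monotonicity — that refining a partition, subject to keeping all parts of size $\ge 2$, can only increase $N_\calP(k_0)$ — is the crux, and I expect it to be the main obstacle: one must show that merging two parts into one never helps, which can be done by a direct injection from the $k_0$-subsets available after the merge into those available before.

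Once the extremal configuration is identified, the first inequality $N_\calP(k_0)\le\binom{\lfloor u/2\rfloor}{\lfloor k_0/2\rfloor}$ follows by a straightforward count in the all-size-$2$ case: choosing a $k_0$-subset as a union of size-$2$ blocks forces $k_0$ even and amounts to choosing $k_0/2$ of the $u/2$ blocks. For the refinement in the case $k_0$ odd and $u$ even, note that if $k_0$ is odd then $X$ cannot be a union of size-$2$ parts alone, so $\calP$ must contain a part of odd size $\ge 3$; then $u\ge 3+2=5$, and since $u$ is even, $u\ge 6\ge 4$. Removing that odd part (of size $\ge 3$) leaves a set of size $u-(\text{odd}\ge3)\le u-3$ from which the remaining even number $k_0-(\text{odd}\ge3)$ of points must be taken as a union of parts; optimizing again by the extremal configuration gives at most $\binom{(u-2)/2}{(k_0-1)/2}$ (the bound is weakest, hence largest, when the odd part has size exactly $3$, giving $\binom{(u-3)/...}{}$; a short check shows $\binom{(u-2)/2}{(k_0-1)/2}$ dominates).

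For the "in particular" clauses: if $k_0=u$ then the only union of parts equal to $\calU$ is $\calU$ itself, so $N_\calP(k_0)=1$. Otherwise $2\le k_0<u$, and I would combine the two displayed bounds with Lemma~\ref{lem:Z}(b) and Lemma~\ref{lem:binom}. Concretely, Lemma~\ref{lem:Z}(b) with $n\leftarrow u$, $k\leftarrow k_0$ gives $\binom{\lfloor u/2\rfloor}{\lfloor k_0/2\rfloor} < 2\binom{u}{k_0}(3k_0/4u)^{\lceil k_0/2\rceil}$, and since $k_0<u$ and $k_0\ge 2$ one has $3k_0/(4u) < 3/4$ and $\lceil k_0/2\rceil\ge 1$, so the bracketed factor times $2$ is at most... here one needs $2(3k_0/4u)^{\lceil k_0/2\rceil}\le \frac{1}{u-1}$, which may fail for $k_0$ close to $u$. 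So instead I would use Lemma~\ref{lem:binom}: writing $u=ca'$-type relations is awkward for general $u$, so the cleaner path is the direct one — prove $N_\calP(k_0)\le\frac{1}{u-1}\binom{u}{k_0}$ by showing $(u-1)\,N_\calP(k_0)\le\binom{u}{k_0}$ via the extremal partition and Lemma~\ref{lem:binom} applied with $a\leftarrow 2$, $c\leftarrow \lfloor u/2\rfloor$, $\ell\leftarrow\lfloor k_0/2\rfloor$, which yields $\binom{u-1}{1}\binom{\lfloor u/2\rfloor}{\lfloor k_0/2\rfloor}\le\binom{u}{k_0}$ in the even-$u$ even-$k_0$ case; the remaining parity cases are handled by the refined bound and a parallel application of Lemma~\ref{lem:binom}. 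The only real work beyond bookkeeping is the monotonicity-under-refinement step flagged above; everything else is assembling inequalities already in hand.
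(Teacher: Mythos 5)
There is a genuine gap, and it sits exactly where you flagged "the main obstacle": the reduction to an extremal configuration via refinement does not work. Monotonicity under refinement is indeed trivially true (if $\calP_1$ refines $\calP_2$ then every union of parts of $\calP_2$ is a union of parts of $\calP_1$, so $N_{\calP_2}(k_0)\le N_{\calP_1}(k_0)$), but a partition with odd-sized parts cannot be refined, keeping all parts of size $\ge 2$, to the configuration of $\lfloor u/2\rfloor$ parts of size $2$. Take $u=6$ and $\calP$ consisting of two parts of size $3$: its only refinement with parts of size $\ge2$ is $\calP$ itself. Here $N_\calP(3)=2>0$, whereas your claimed extremal all-size-$2$ partition gives $N(3)=0$; so the all-size-$2$ partition is not extremal, and your argument would "prove" $N_\calP(k_0)=0$ for every odd $k_0$ when $u$ is even, which is false. (Note this example attains the refined bound $\binom{(u-2)/2}{(k_0-1)/2}=\binom{2}{1}=2$ with equality, so no amount of slack rescues the reduction.) Your sketch for the odd-$k_0$ case ("remove the odd part and optimize again") inherits the same problem and additionally ignores the multiplicity coming from the choice of which odd parts of $\calP$ to include in the subset.

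The paper's proof gets around precisely this issue by building an auxiliary partition $\calP'$ that is \emph{not} a refinement of $\calP$: each part of $\calP$ is split into pairs with one point left over when its size is odd, and the leftover points from \emph{different} odd parts are then paired with each other, leaving at most two singletons. One then defines, for each admissible $k_0$-subset $\eta$, a $k_0$-subset $\eta'$ that is a union of parts of $\calP'$, and the key point is that $\eta$ is recoverable from $\eta'$ (as the union of the parts of $\calP$ meeting $\eta'$ in at least two points), so the map is injective and $N_\calP(k_0)$ is bounded by a count over the much more rigid partition $\calP'$. Your final assembly (deducing the $\tfrac{1}{u-1}\binom{u}{k_0}$ bound from Lemma~\ref{lem:binom} with $a=2$, split into parity cases) matches the paper and is fine; the "in particular" case $k_0=u$ is also fine. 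But the central counting bound needs the cross-part pairing idea or an equivalent device; as written, your crux step fails.
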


\begin{proof}
First we construct a partition $\calP'$ of $\calU$ having at most two parts
of size 1, and all parts of size at most 2.
Start with $\calP'=\emptyset$ and run through the parts of $\calP$.
For each part $P\in\calP$ of even size, choose any partition of
$P$ with all parts of size 2, and add the parts of this partition to 
$\calP'$.
If all parts of $\calP$ have even size, then the construction of
$\calP'$ is completed in this way.
So suppose that $\calP$ has at least one part of odd size.  In this case
$\calP'$ will have 1 or 2 parts of size 1, and its construction is 
completed as follows.  For each part
$P\in\calP$ of odd size $p:=|P|$, add $(p-1)/2$
parts of size 2 to $\calP'$  formed from $p-1$ of the points of $P$.
Let $P_1,\dots,P_r$ be the odd length parts of $\calP$. Pair up the
remaining $r$ points into
parts of size 2 and add them to $\calP'$,
leaving exactly 1 or 2 of these points to form singleton parts of
$\calP'$.

Next  we define, for  each $k_0$-subset  $\eta$ of  $\calU$ that  is a
union of parts  of $\calP$, a $k_0$-subset $\eta'$ that  is a union of
parts of $\calP'$. Note that if  $k_0$ is odd then $\eta$ must contain
a part of  $\calP$ of odd size,  and in this case $\calP'$  has one or
two singleton  parts. If $k_0$ is  odd and $\calP'$  has two singleton
parts, then  we choose one  of them, and  we always place  this chosen
singleton part in  $\eta'$.  To define $\eta'$ for  a given $\eta$, we
start with  $\eta'=\emptyset$ and build  it up by considering  in turn
each of  the parts $P$  of $\calP$ contained  in $\eta$.  If  $|P|$ is
even, then $P$ is  a union of parts of $\calP'$ of  size 2, and we add
all  of these  parts to  $\eta'$.  If  $|P|$ is  odd, then  we  add to
$\eta'$ all the parts of size  2 of $\calP'$ contained in $P$. At this
stage  $|\eta'|=k_0-\ell$, where  $\ell$ is  the number  of  odd sized
parts of  $\calP$ contained in $\eta$.   Next we add to  $\eta'$ up to
$\lfloor\ell/2\rfloor$ parts of $\calP'$ of size 2 that contain points
from two different parts of $\calP$. If $\eta'$ cannot be completed in
this way then either (i) $\ell$ is  odd, or (ii) $\ell$ is even and is
equal to the number of odd  sized parts of $\calP$. Case (i) occurs if
and only  if $k_0$ is odd, and  here we add to  $\eta'$ the designated
singleton part of $\calP'$. In case (ii) there are two singleton parts
of $\calP'$, and we add to $\eta'$ these two singleton parts.

Note  that, if  $\ell\geq2$,  then we  may  have had  some freedom  in
choosing the  $\lfloor\ell/2\rfloor$ parts of $\calP'$ of  size 2 that
contain points from two different parts of $\calP$, so $\eta'$ may not
be determined  uniquely by $\eta$.  On the other hand,  $\eta'$ always
determines $\eta$ uniquely, since $\eta$  is the union of the parts of
$\calP$ that have  at least two points in  $\eta'$. Thus distinct sets
$\eta$ correspond to distinct sets $\eta'$.

It follows that $N_{\calP}(k_0)\leq N'$ where $N'$ is the
number of $k_0$-subsets $\ga\subseteq\calU$ such that $\ga$ is a union
of parts of $\calP'$ and in addition, if $k_0$ is odd and $\calP'$ has
two singleton parts, then $\ga$ contains a designated one of these
singleton parts.

Suppose that  $\ga$ is such a  $k_0$-subset.  If $\calP'$  has at most
one part of size 1,  then $\ga$ contains $\lfloor k_0/2\rfloor$ of the
parts of  $\calP'$ of size  2 (and also  a singleton part if  $k_0$ is
odd).        Thus       $N'\leq\binom{\lfloor      u/2\rfloor}{\lfloor
k_0/2\rfloor}$.  Note that  in  this  case, if  $k_0$  were odd,  then
$\calP$ would have  at least one odd part, and  so $\calP'$ would have
exactly one  odd part,  whence $u$  would be odd.  Thus the first assertion is
proved  in this  case.  So  suppose  that $\calP'$  has two  singleton
parts, in which case $u$ is even.  If $k_0$ is odd then $k_0 \ge 3$ and
$\ga$ consists
of $\lfloor k_0/2\rfloor$  of the parts of $\calP'$ of  size 2 and the
designated      singleton      part,      whence     $u\geq4$      and
$N'\leq\binom{(u-2)/2}{(k_0-1)/2}<  \binom{\lfloor u/2\rfloor}{\lfloor
k_0/2\rfloor}$.   On the  other  hand,  if $k_0$  is  even then  $\ga$
consists of  $k_0/2$ of  the two-point parts  (or $k_0/2 -1$  parts of
size  two and  the two  singleton parts).  Again $N'\leq\binom{\lfloor
u/2\rfloor}{\lfloor k_0/2\rfloor}$. This proves the first assertion in
all cases.

Note that   $\lfloor u/2\rfloor = \lfloor k_0/2\rfloor$ if and only if
either $k_0 = u$, or $k_0 = u-1$ with $u$ odd. If $k_0 = u$ 
obviously $N_{\calP}(k_0)= N' = 1.$ If $k_0 = u-1$ with $u$ odd
then ${\calP'}$ has a unique part of size 1 and its complement is the
unique $k_0$-subset of $U$ that is a union of parts of ${\calP'}$ -
it may or may not be a union of parts of ${\calP}.$ Thus 
$N_{\calP}(k_0)\leq N' = 1 \le \frac{1}{u-1} \binom{u}{k_0}.$

So suppose from now on that 
 $\lfloor k_0/2\rfloor < \lfloor u/2\rfloor,$ and 
set $u_1=\lfloor u/2\rfloor$ and $k_1=\lfloor k_0/2\rfloor$. Then
$ \binom{\lfloor u/2\rfloor}{\lfloor k_0/2\rfloor}=\binom{u_1}{k_1}$, 
and by
Lemma~\ref{lem:binom}, this is at most $\frac{1}{2u_1-1}\binom{2u_1}{2k_1}$.
If $k_0$ and $u$ are  even, then $k_0 < u$ and this quantity is
at most $\frac{1}{u-1}\binom{u}{k_0}$. 
If $k_0$ is even and $u$ is odd, then $2 \le k_0$ and
$\frac{1}{2u_1-1}\binom{2u_1}{2k_1} = \frac{1}{u-2}\binom{u-1}{k_0}.$
This in turn is at most  $\frac{1}{u-1}\binom{u}{k_0}$. 
Now suppose $k_0$ and $u$ are odd. Then $\lfloor k_0/2\rfloor  <
 \lfloor u/2 \rfloor$ implies $k_0 \le
u-2$, and
$\frac{1}{2u_1-1}\binom{2u_1}{2k_1} =
\frac{1}{u-2}\binom{u-1}{k_0-1}$ which
is at most $\frac{1}{u-1}\binom{u}{k_0}$. 
Finally consider $k_0$ 
odd and $u$ is even. As shown above $u\geq  4$ and
$N' \le \binom{(u-2)/2} {(k_0-1)/2}$. By Lemma~\ref{lem:binom}, this is at most
  $\frac{1}{u-3}\binom{u-2}{k_0-1}$, which in turn is at most
$\frac{1}{u-1}\binom{u}{k_0}$.
\end{proof}

For a prime $p$ and an integer $n$, let $n_p$ denote the $p$-part of
$n$, that is the highest power of $p$ dividing $n$.
Recall that, for a positive integer $k_0\leq n$,
a $k_0$-subset $\ga'$ of $\Om$, and an element $g\in S_n$, we denote by 
$c_{k_0}(\ga',g)$ the length of the $g$-cycle containing $\ga'$ in the 
action of $g$ on $k_0$-sets.

\begin{lemma}\label{pc-1}
Let $g\in S_n$, let $C$ be a $g$-cycle of length $t$,  let 
$k_0$ be a positive integer such that $k_0\leq t$ and let $p$ be a prime dividing $t$.
\begin{itemize}
 \item[(a)] Suppose that $\ga'$ is a  $k_0$-subset of $C$ such 
that the $p$-part $t_p$ does not divide $c_{k_0}(\ga',g)$. Then 
$\ga'$ is a union of $Z(C,p)$-orbits, where $Z(C,p)$ is the 
subgroup  of order $p$ of the cyclic group $\la g^C\ra\cong Z_t$ 
induced by $g$ on $C$. In particular $p$ 
divides $\gcd(k_0,t)$.

\item[(b)] The number $\sigma(k_0,C)$ of $k_0$-subsets $\ga'$ of $C$ 
such that $t_p$ does not divide $c_{k_0}(\ga',g)$ is
at most $\binom{\lfloor t/2\rfloor}{\lfloor k_0/2
\rfloor}$, and in particular, is 
 $1$ if $k_0=t$, and at most 
$\frac{1}{t-1}\binom{t}{k_0}$ if $k_0<t$.

\end{itemize}
\end{lemma}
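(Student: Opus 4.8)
The plan is to work entirely inside the cyclic group $\la g^C\ra\cong Z_t$ of permutations that $g$ induces on $C$, which acts regularly on the $t$ points of $C$. Since $\ga'\subseteq C$ and $g$ fixes $C$ setwise, the $g$-cycle through $\ga'$ in the action on $k_0$-subsets is exactly the $\la g^C\ra$-orbit of $\ga'$, so $c_{k_0}(\ga',g) = [\la g^C\ra : {\rm Stab}(\ga')]$, where ${\rm Stab}(\ga')$ denotes the setwise stabiliser inside $\la g^C\ra$. Writing $s=|{\rm Stab}(\ga')|$, a divisor of $t$, we get $c_{k_0}(\ga',g)=t/s$, and comparing $p$-adic valuations shows that $t_p\nmid t/s$ if and only if $p\mid s$. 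This reformulation of the hypothesis is the crux of the argument.

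For (a), suppose $t_p\nmid c_{k_0}(\ga',g)$, so by the above $p$ divides $|{\rm Stab}(\ga')|$. As ${\rm Stab}(\ga')$ is a subgroup of the cyclic group $\la g^C\ra$, it contains the unique subgroup of order $p$, namely $Z(C,p)$. Hence $Z(C,p)$ fixes $\ga'$ setwise, so $\ga'$ is a union of $Z(C,p)$-orbits on $C$. Since $\la g^C\ra$ acts freely on $C$, each such orbit has size $p$, whence $p\mid|\ga'|=k_0$; together with $p\mid t$ this gives $p\mid\gcd(k_0,t)$, completing part (a).

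For (b), first observe that the case $k_0=1$ is trivial: by part (a) every $\ga'$ counted by $\sigma(k_0,C)$ satisfies $p\mid k_0$, so $\sigma(1,C)=0$ and all the stated bounds hold. Now assume $k_0\ge2$. By part (a), every $\ga'$ counted by $\sigma(k_0,C)$ is a union of parts of the partition $\calP$ of $C$ into the $Z(C,p)$-orbits; since $p$ divides $t$, this is a partition of the $u:=t\ge2$ points of $C$ into $t/p$ parts, each of size $p\ge2$. Thus $\sigma(k_0,C)\le N_{\calP}(k_0)$, and Proposition~\ref{npk} (applied with $\calU=C$) gives $N_{\calP}(k_0)\le\binom{\lfloor t/2\rfloor}{\lfloor k_0/2\rfloor}$, which equals $1$ when $k_0=t$ and is at most $\frac{1}{t-1}\binom{t}{k_0}$ when $k_0<t$.

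There is no serious obstacle here: everything reduces to standard facts about cyclic groups acting regularly. The only points needing care are the valuation computation that turns ``$t_p\nmid c_{k_0}(\ga',g)$'' into ``$p\mid|{\rm Stab}(\ga')|$'', and checking that the $Z(C,p)$-orbits really do form a partition into equal blocks of size $p\ge2$ so that Proposition~\ref{npk} applies verbatim.
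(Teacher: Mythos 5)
Your proof is correct and follows essentially the same route as the paper: part (a) by observing that $t_p\nmid c_{k_0}(\ga',g)$ forces the setwise stabiliser of $\ga'$ in the cyclic group $\la g^C\ra$ to contain the unique subgroup $Z(C,p)$ of order $p$, and part (b) by feeding the resulting partition of $C$ into $Z(C,p)$-orbits of size $p\ge 2$ into Proposition~\ref{npk}. You merely make explicit a few points the paper leaves implicit (the valuation computation identifying the hypothesis with $p\mid|{\rm Stab}(\ga')|$, the regularity of $\la g^C\ra$ on $C$, and the trivial case $k_0=1$), which is fine.
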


\begin{proof}
(a) Since  $t_p$ does not divide $c_{k_0}(\ga',g)$ and  $\la g^C\ra\cong Z_t$,
it follows that the setwise stabiliser $H$ of $\ga'$ in  $\la g^C\ra$
contains the unique subgroup $Z(C,p)$ of $\la g^C\ra$ of order $p$. As $\ga'$ 
is $H$-invariant, $\ga'$ is a union of $H$-orbits in $C$, and hence 
$\ga'$ is a union of $Z(C,p)$-orbits in $C$. In particular, $p$ divides $k_0$ as well as $t$.

(b) If $k_0=t$ then $C$ is its unique $k_0$-subset and  $\sigma(k_0,C)=1$.
If $k_0<t$ then, by Proposition~\ref{npk},  $\sigma(k_0,C)\leq \binom{\lfloor t/2\rfloor}{\lfloor k_0/2
\rfloor}$ and also $\sigma(k_0,C)\leq
\frac{1}{t-1}\binom{t}{k_0}$.
\end{proof}

\begin{corollary}\label{pc}
Let $G,n,m,r$ be as in one of the lines of Table~{\rm\ref{tbl:elements}},
and let $g\in G$. Let $\Si(g)$ be as in Table~{\rm\ref{tbl:notation}}
with $u=|\Si(g)|$, and let $k_0$ be a positive integer such that $k_0\leq u$. 
Then the number $\sigma(k_0,\Si(g))$
 of $k_0$-subsets $\ga'$ of $\Sigma(g)$ such that $c_{k_0}(\ga',g)$ 
divides $rm$ satisfies  
$$\sigma (k_0,\Si(g)) \le 
\begin{cases}
0 & \mbox{\ if\ } k_0=1\\
1 & \mbox{\ if\ } k_0 = u\\
\frac{1}{u-1} \binom{u}{k_0} & \mbox{\ if\ } 1< k_0 < u.\\
\end{cases}$$
\end{corollary}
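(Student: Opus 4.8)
The plan is to reduce the statement to Lemma~\ref{pc-1} by analysing, cycle by cycle, those $k_0$-subsets $\ga'$ of $\Si(g)$ whose $g$-cycle length on $k_0$-sets divides $rm$. First I would recall that, by definition of $\Si(g)$ in Table~\ref{tbl:notation}, every $g$-cycle $C$ contained in $\Si(g)$ has length $t=|C|$ \emph{not} dividing $rm$; hence there is a prime power $t_p$ (for some prime $p\mid t$) that does not divide $rm$. The key observation is that if $\ga'\subseteq\Si(g)$ satisfies $c_{k_0}(\ga',g)\mid rm$, then in particular, writing $\ga'_C := \ga'\cap C$ for each $g$-cycle $C\subseteq\Si(g)$, the length $c_{|\ga'_C|}(\ga'_C,g)$ of the $g$-cycle through $\ga'_C$ (as a subset of $C$) divides $c_{k_0}(\ga',g)$ and hence divides $rm$, so $t_p$ does not divide $c_{|\ga'_C|}(\ga'_C,g)$. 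Thus each ``block'' $\ga'_C$ of $\ga'$ is one of the $\sigma(|\ga'_C|,C)$ sets counted in Lemma~\ref{pc-1}(b).

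The case $k_0=1$ is immediate: a single point $\ga'=\{x\}$ with $x$ in a cycle $C$ of length $t$ has $c_1(\ga',g)=t\nmid rm$, so $\sigma(1,\Si(g))=0$. The case $k_0=u$ is also immediate, since $\Si(g)$ itself is the unique $u$-subset of $\Si(g)$ (and one should note $c_u(\Si(g),g)$ may or may not divide $rm$), giving $\sigma(u,\Si(g))\le 1$. For the main range $1<k_0<u$ I would argue as follows. If $\Si(g)$ is a single $g$-cycle $C$ of length $u$, then $\sigma(k_0,\Si(g))=\sigma(k_0,C)\le\frac{1}{u-1}\binom{u}{k_0}$ directly by Lemma~\ref{pc-1}(b). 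Otherwise $\Si(g)$ is a disjoint union of $g$-cycles $C_1,\dots,C_j$ with $j\ge 2$ and lengths $t_1,\dots,t_j$ summing to $u$, each $t_i\ge 2$. For any $k_0$-subset $\ga'$ counted by $\sigma(k_0,\Si(g))$, each block $\ga'_{C_i}$ is one of the $\sigma(|\ga'_{C_i}|,C_i)$ subsets from Lemma~\ref{pc-1}(b); summing over the compositions $(k_1,\dots,k_j)$ of $k_0$ with $0\le k_i\le t_i$ gives
\[
\sigma(k_0,\Si(g)) \le \sum_{k_1+\cdots+k_j=k_0}\ \prod_{i=1}^{j}\sigma(k_i,C_i),
\]
where by convention $\sigma(0,C_i)=1$, and $\sigma(t_i,C_i)\le 1$, while $\sigma(k_i,C_i)\le\frac{1}{t_i-1}\binom{t_i}{k_i}\le\binom{t_i-1}{k_i}\cdot\frac{t_i}{t_i-1}\cdot\frac{1}{t_i}$ type bounds hold for $0<k_i<t_i$. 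The cleanest route is to prove by induction on $j$ that the right-hand side is at most $\frac{1}{u-1}\binom{u}{k_0}$ when $1<k_0<u$: the base case $j=1$ is Lemma~\ref{pc-1}(b), and the inductive step splits off the last cycle $C_j$, using the Vandermonde-type convolution $\sum_{a}\binom{u-t_j}{k_0-a}\binom{t_j}{a}=\binom{u}{k_0}$ together with the fact that whenever a block $\ga'_{C_i}$ is a \emph{full} cycle (contributing the factor $\le 1$ rather than $\le\frac{1}{t_i-1}\binom{t_i}{k_i}$) there is at least one \emph{proper} nonempty block elsewhere, which supplies the needed $\frac{1}{t_i-1}\le\frac{1}{u-1}\cdot(\text{something}\ge 1)$ savings; the condition $1<k_0<u$ guarantees at least one such proper block exists.

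The main obstacle I expect is exactly this bookkeeping in the inductive step: making the single factor $\frac{1}{t_i-1}$ from one proper block do the work of producing the global $\frac{1}{u-1}$, while all other blocks may be ``cheap'' full-cycle blocks contributing only $\le 1$. The delicate point is that $\frac{1}{t_i-1}\binom{t_i}{k_i}$ is a strictly better-than-uniform bound only because $\frac{1}{t_i-1}\le\frac{1}{2}$ for $t_i\ge 3$ (and for $t_i=2$ one has $k_i=1$ forced, with $\sigma(1,C_i)=0$, so such blocks cannot even occur as proper blocks); combining several such factors multiplicatively with the Vandermonde identity, and checking the edge cases $k_0=u-1$, $u$ even or odd, where $\lfloor u/2\rfloor=\lfloor k_0/2\rfloor$ can fail — these are the same edge cases already handled in Proposition~\ref{npk} — will require care but no new ideas. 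Alternatively, if the induction proves awkward, one can bound $\sigma(k_0,\Si(g))$ by the number of $k_0$-subsets of $\Si(g)$ that are unions of orbits of the subgroup generated by the $Z(C_i,p_i)$'s and apply Proposition~\ref{npk} to the resulting partition of $\Si(g)$ into parts all of size $\ge 2$ — this is the approach most consistent with the structure of the preceding lemmas, and I would try it first, falling back to the explicit convolution estimate only if the partition argument does not directly yield the $\frac{1}{u-1}$ factor.
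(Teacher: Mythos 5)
Your preferred route — observing via Lemma~\ref{pc-1}(a) that any admissible $\ga'$ must be a union of the $Z(C,p(C))$-orbits, which partition $\Si(g)$ into parts of size at least $2$, and then applying Proposition~\ref{npk} — is exactly the paper's proof, and Proposition~\ref{npk} does directly deliver the $\tfrac{1}{u-1}\binom{u}{k_0}$ bound (and the $k_0=1$ and $k_0=u$ cases), so the fallback cycle-by-cycle convolution induction you sketch is not needed. The proposal is correct as long as you commit to that partition argument rather than the convolution estimate, whose inductive bookkeeping you rightly identify as the delicate (and here avoidable) point.
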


\begin{proof}
For each $g$-cycle $C$ in $\Sigma(g)$, by the definition of $\Si(g)$,
 $|C|$ does not divide $rm$, and hence there exists 
a prime $p(C)$ such that $|C|_{p(C)}$ does not divide $rm$. Let 
$Z(C,p(C))$ denote the subgroup of order $p(C)$ of the cyclic group 
$\la g^C\ra$ induced by $g$ on $C$, let $\mathcal{P}(C)$ denote the
set of $Z(C,p(C))$-orbits in $C$ (all of length $p(C)$), and let $\mathcal{P}=
\cup_C\mathcal{P}(C)$ denote the corresponding partition of $\Si(g)$.

Suppose that  $\ga'$ is a $k_0$-subset of $\Sigma(g)$, and for each
$g$-cycle $C$ in $\Sigma(g)$, let $k(C)=|\ga'\cap C|$. 
Then $c_{k_0}(\ga',g)$ is the least common multiple of 
 $c_{k(C)}(\ga'\cap C,g)$, over all $g$-cycles $C$ such that $k(C)\ne0$.
Note that  $c_{k(C)}(\ga'\cap C,g)$ divides $|C|$.

Suppose now that $c_{k_0}(\ga',g)$ divides $rm$. Then for each $C$ 
such that $k(C)\ne0$, also $c_{k(C)}(\ga'\cap C,g)$ divides $rm$,
and hence $|C|_{p(C)}$ does not divide $c_{k(C)}(\ga'\cap C,g)$. 
By Lemma~\ref{pc-1}, $\ga'\cap C$ is a union of parts of $\mathcal{P}(C)$.
Thus $\ga'$ is a union of parts of $\mathcal{P}$. 
Since  all parts of $\calP$ have size at least 2, this implies that
$\sigma (k_0,\Si(g))=0$ if $k_0=1$, and 
the inequality for $1 < k_0\le u$ follows from Proposition~\ref{npk}.
\end{proof}

\section{Tracing $k$-subsets}
\label{sec:tracing}

For the remainder of this paper we assume that $k$ is an
integer with $2 \le k \le n/2.$ We use $\Delta(g)$, $\Sigma(g)$ and other notation 
introduced in Tables~\ref{tbl:notation} and~\ref{tbl:subsets}.
Further, we use without further reference the number  $M$ of independent uniformly distributed random $k$-subsets in Algorithm~$\ref{algo:TraceCycle}$ {\sc TraceCycle}, where $M$ satisfies (\ref{eq:M}), in particular $M \ge 4$. 

\begin{proposition}\label{TraceOne}
Let $G,n,m,r$ be as in one of the lines of Table~{\rm\ref{tbl:elements}},
and suppose that $g\in G$ does not contain an $m$-cycle. Set  $v=|\Del(g)|$ and 
 suppose that $v \le n - k - 1$. Then the proportion of
$k$-subsets $\ga$ of $\Om$ such that $c_k(\ga,g)=r_0m$, for some 
$r_0$ dividing $r$, is at most $\frac{v^k}{n^k}+ \frac{1}{n-v-1}.$ 
\end{proposition}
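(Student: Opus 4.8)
The plan is to split the $k$-subsets $\gamma$ of $\Omega$ into two classes according to whether $\gamma\subseteq\Delta(g)$ or not, and to bound the contribution of each class separately. If $\gamma\subseteq\Delta(g)$ then the number of such $\gamma$ is $\binom{v}{k}$, and by Lemma~\ref{lem:Z}(a) this is at most $(v/n)^k\binom{n}{k}$, so the proportion of $k$-subsets contained in $\Delta(g)$ is at most $v^k/n^k$. This already handles the first term; note that every $k$-subset contained in $\Delta(g)$ automatically has $c_k(\gamma,g)$ dividing $\mathrm{lcm}$ of the cycle lengths in $\Delta(g)$, which divides $rm$, but we do not even need this — we simply overcount by including all of them.

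For the second class, suppose $\gamma\not\subseteq\Delta(g)$, so $\gamma\cap\Sigma(g)\neq\emptyset$; write $k_0=|\gamma\cap\Sigma(g)|\ge 1$ and $\gamma'=\gamma\cap\Sigma(g)$. The key observation is that $c_k(\gamma,g)$ is the least common multiple of $c_{k_0}(\gamma',g)$ and the cycle length of $\gamma\cap\Delta(g)$ on the $(k-k_0)$-subsets of $\Delta(g)$; in particular $c_{k_0}(\gamma',g)$ divides $c_k(\gamma,g)$. Hence if $c_k(\gamma,g)=r_0m$ with $r_0\mid r$, then $c_{k_0}(\gamma',g)$ divides $rm$. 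So it suffices to bound the number of $k$-subsets $\gamma$ whose intersection $\gamma'$ with $\Sigma(g)$ is a nonempty subset with $c_{k_0}(\gamma',g)\mid rm$. We count these by first choosing $\gamma'\subseteq\Sigma(g)$ with the divisibility property, and then choosing the remaining $k-k_0$ points arbitrarily from $\Delta(g)$ (which has size $v$). By Corollary~\ref{pc}, the number of valid $\gamma'$ of size $k_0$ is $0$ if $k_0=1$, is $1$ if $k_0=u$, and is at most $\frac{1}{u-1}\binom{u}{k_0}$ if $1<k_0<u$; in particular, since the $k_0=1$ count is $0$, we need only consider $k_0\ge 2$, and in every case the number of valid $\gamma'$ of size $k_0$ is at most $\frac{1}{u-1}\binom{u}{k_0}$ (checking the $k_0=u$ case: $1\le\frac{1}{u-1}\binom{u}{u}$ fails, but there $k-k_0$ points must come from $\Delta(g)$ and $k\le n/2 \le v+u-\dots$; more carefully, when $k_0=u$ we have $u\le k$, and I will treat $\frac{1}{u-1}\binom{u}{u}=\frac{1}{u-1}$ as the bound, valid since $u\ge 2$).

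Putting this together, the number of $k$-subsets in the second class is at most
\[
\sum_{k_0=2}^{\min(k,u)} \frac{1}{u-1}\binom{u}{k_0}\binom{v}{k-k_0}
\;\le\; \frac{1}{u-1}\sum_{k_0=0}^{k}\binom{u}{k_0}\binom{v}{k-k_0}
\;=\;\frac{1}{u-1}\binom{u+v}{k}\;=\;\frac{1}{u-1}\binom{n}{k},
\]
using Vandermonde's identity and $u+v=n$. Since $v\le n-k-1$ forces $u=n-v\ge k+1$, we have $u-1\ge k\ge 2$, so this is at most $\frac{1}{n-v-1}\binom{n}{k}$ (indeed $u-1=n-v-1$), giving a proportion of at most $\frac{1}{n-v-1}$. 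Adding the two bounds completes the proof.

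I expect the main obstacle to be the bookkeeping around the boundary cases $k_0=u$ and $k_0=k$ in the second sum: one must check that $c_{k_0}(\gamma',g)\mid rm$ genuinely follows from $c_k(\gamma,g)=r_0 m$ (this is the lcm-divisibility remark, used already in the proof of Corollary~\ref{pc}), and that the Corollary~\ref{pc} bound $\frac{1}{u-1}\binom{u}{k_0}$ can be applied uniformly; the hypothesis $v\le n-k-1$, equivalently $u\ge k+1$, is exactly what makes $u-1=n-v-1$ positive and makes all the needed binomial coefficients well-defined, and it is the place where that hypothesis gets used.
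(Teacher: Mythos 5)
Your proposal is correct and follows essentially the same route as the paper's proof: split on $k_0=|\gamma\cap\Sigma(g)|$, use Corollary~\ref{pc} for $k_0\ge 2$, Vandermonde to collapse the sum to $\frac{1}{u-1}\binom{n}{k}$, and Lemma~\ref{lem:Z}(a) for the $k_0=0$ term. The boundary case $k_0=u$ that you hedge about is in fact vacuous, which is how the paper dispatches it: the hypothesis $v\le n-k-1$ gives $u\ge k+1>k\ge k_0$, so $k_0<u$ always and the uniform bound $\frac{1}{u-1}\binom{u}{k_0}$ applies without any special pleading.
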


\begin{proof}
Set $u=n-v=|\Si(g)|$. Suppose that $\ga$
is a $k$-subset of $\Om$ such that $c_k(\ga,g)=r_0m$ for some 
$r_0$ dividing $r$, and set $k_0:=|\ga\cap 
\Sigma(g)|$. Then $k_0 \le \min\{ k,u\}$. 
By assumption,  $v \le n-k-1$ and so $u = n-v \ge k+1$ and  $k_0\leq \min\{k,u\} = k$. 
Also $c_{k_0}(\ga\cap\Si(g),g)$
divides $c_k(\ga,g)$, and hence divides $rm$.
By Corollary~\ref{pc}, the number $\sigma(k_0,\Si(g))$ of 
$k_0$-subsets $\ga'$ of $\Sigma(g)$ such that 
$c_{k_0}(\ga',g)$ divides $rm$ is 0 if
$k_0=1$, 1  if $k_0 = u$, and at most 
$\frac{1}{u-1} \binom{u}{k_0}$, otherwise.
If $k_0=0$ then $\gamma$ is one of the $\binom{v}{k}$ $k$-subsets of $\Delta(g).$
Thus the number of possibilities for
$\ga$ is at most
\begin{eqnarray*}
&&\binom{v}{k}+\sum_{k_0=2}^{k} \sigma(k_0,\Si(g))\binom{n-u}{k-k_0}\\
&\leq&  \binom{v}{k}+\frac{1}{u-1}\sum_{k_0=2}^{k}
\binom{u}{k_0}\binom{n-u}{k-k_0}\\ 
&<&\binom{v}{k} + \frac{1}{u-1}\binom{n}{k}.
\end{eqnarray*}
Now $u-1=n-v-1$, hence the above is  $\binom{v}{k} + \frac{1}{n-v-1}
\binom{n}{k}$. By Lemma~\ref{lem:Z}(a),  $\binom{v}{k}$ is at most
$(v/n)^k \binom{n}{k}$, which completes the proof.
\end{proof}

\begin{lemma}\label{TraceTwo}
Let $G,n,m,r$ be as in one of the lines of Table~{\rm\ref{tbl:elements}}.
Let $g$ be a uniformly distributed random element of $G$, and suppose that
$g$ does not contain an $m$-cycle, and that $v=|\Del(g)| \le n - k - 1$. 
Then the following both hold.
\begin{itemize}
\item[(a)] $\prob( \mbox{\sc TraceCycle}(g) = \true
) \le  2^M\left(\left(\frac{v^k}{n^k}\right)^M+
  \left(\frac{1}{n-v-1}\right)^M\right),$  
\item[(b)]
$\prob( \mbox{\sc TraceCycle}(g) = \true) \le  16\max\left\{\left(\frac{v}{n}\right)^4,
  \left(\frac{1}{n-v-1}\right)^4\right\}.$ 
\end{itemize}
Moreover, if $3 \le v\le n-3$ then 
$\prob( \mbox{\sc TraceCycle}(g) = \true
) \le  16\left(\frac{v}{n}\right)^4.$
\end{lemma}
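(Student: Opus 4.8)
The plan is to derive all three assertions of Lemma~\ref{TraceTwo} directly from Proposition~\ref{TraceOne}, exploiting the independence of the $M$ random $k$-subsets chosen in \tracecycle.

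First I would set $P := \prob(c_k(\ga,g) = r_0m \text{ for some } r_0 \mid r)$, the single-point probability bounded in Proposition~\ref{TraceOne}. Since $g$ does not contain an $m$-cycle and $v \le n-k-1$, that proposition gives $P \le \frac{v^k}{n^k} + \frac{1}{n-v-1}$. The event $\tracecycle(g) = \true$ means that each of the $M$ independent uniform random $k$-subsets $\lambda_i$ lies in a $g$-cycle of length $r_0 m$ for some $r_0 \mid r$; by independence this probability is exactly $P^M$. Now apply the elementary inequality $(x+y)^M \le 2^M \max\{x,y\}^M = 2^M\max\{x^M, y^M\} \le 2^M(x^M + y^M)$ for non-negative reals (which follows from $x + y \le 2\max\{x,y\}$), with $x = v^k/n^k$ and $y = 1/(n-v-1)$. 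This yields part~(a): $\prob(\tracecycle(g)=\true) \le P^M \le 2^M\left(\left(\frac{v^k}{n^k}\right)^M + \left(\frac{1}{n-v-1}\right)^M\right)$.

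For part~(b), specialize to $M = 4$: the hypothesis~(\ref{eq:M}) guarantees $M \ge 4$, and since $P \le 1$ we may always replace the exponent $M$ by $4$, so $P^M \le P^4$. Then $P^4 \le 2^4\left(\left(\frac{v^k}{n^k}\right)^4 + \left(\frac{1}{n-v-1}\right)^4\right) \le 2 \cdot 16 \max\left\{\left(\frac{v^k}{n^k}\right)^4, \left(\frac{1}{n-v-1}\right)^4\right\}$; one extra factor of $2$ appears passing back from the sum of the two fourth powers to $2\max$ of them, but since $k \ge 2$ we have $v^k/n^k \le (v/n)^2 \le v/n$, which (together with the trivial monotonicity) lets me absorb things and bound the first term by $(v/n)^4$. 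More carefully, I would write $\left(\frac{v^k}{n^k}\right)^4 \le \left(\frac{v}{n}\right)^4$ since $v/n < 1$ and $k \ge 1$, so $P \le \frac{v}{n} + \frac{1}{n-v-1}$ already, and hence $P^4 \le \left(\frac{v}{n} + \frac{1}{n-v-1}\right)^4 \le 2^4 \max\left\{\left(\frac{v}{n}\right)^4, \left(\frac{1}{n-v-1}\right)^4\right\} = 16\max\left\{\left(\frac{v}{n}\right)^4, \left(\frac{1}{n-v-1}\right)^4\right\}$, which is part~(b).

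Finally, for the \emph{Moreover} clause, I assume $3 \le v \le n-3$. The point is that under these bounds the two competing quantities $v/n$ and $1/(n-v-1)$ satisfy $1/(n-v-1) \le v/n$, so the maximum in part~(b) equals $(v/n)^4$. Indeed $1/(n-v-1) \le v/n$ is equivalent to $n \le v(n-v-1)$, i.e. $v(n-v-1) - n \ge 0$; writing $w = n-v$ with $3 \le w$, this is $(n-w)(w-1) - n = n w - n - w^2 + w - n \cdot 0 \ge 0$, and treating $v(n-v-1)-n$ as a concave quadratic in $v$ on $[3, n-3]$ it suffices to check the two endpoints $v=3$ and $v = n-3$, both of which reduce (using $n \ge 156$ from Lemma~\ref{lem:simple}(ii), or more simply $n \ge 7$ which is forced here) to a trivially true inequality. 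Hence $\max\left\{(v/n)^4, (1/(n-v-1))^4\right\} = (v/n)^4$ and part~(b) gives $\prob(\tracecycle(g)=\true) \le 16(v/n)^4$. The only mild subtlety — and the step I would be most careful about — is verifying the endpoint inequalities for the \emph{Moreover} clause cleanly, since one must make sure $n$ is large enough (which is guaranteed by the standing hypotheses on $n$), but this is a routine one-line check at $v=3$ and $v=n-3$.
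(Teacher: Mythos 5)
Your proposal is correct and follows essentially the same route as the paper: bound the single-subset probability via Proposition~\ref{TraceOne}, use independence to get $P^M$, apply $x+y\le 2\max\{x,y\}$ for parts (a) and (b) together with $P^M\le P^4$ and $v^k/n^k\le v/n$, and reduce the \emph{Moreover} clause to the inequality $v(n-v-1)\ge n$. The only cosmetic difference is that you verify that inequality by concavity and endpoint checks (both endpoints giving $n\ge 6$), whereas the paper argues directly from $n\ge v+2+\tfrac{2}{v-1}$; both are valid one-line checks.
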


\begin{proof}
Now {\sc  TraceCycle}$(g)=$ \true\  if and only  if $c_k(\ga,g)
=r_0m$, for some $r_0$ dividing $r$,
for each of the  $M$ independent  uniformly distributed random  $k$-sets $\ga$
tested during  the algorithm. Thus if  $g$ does not contain
an    $m$-cycle,    the   probability  that   {\sc  TraceCycle}$(g)=$ \true\ is
$p^M$, where $p$ is  the proportion of 
$k$-subsets $\ga$ such that $c_k(\ga,g)=r_0m$  for some $r_0$ dividing $r$. 
By Proposition~\ref{TraceOne},
$ p \le \frac{v^k}{n^k}+ \frac{1}{n-v-1}$.
Note that $p^M \le p^4$ since $p \le 1$ and
$M \ge 4$.
Set $x= \frac{v^k}{n^k}$ and $y= \frac{1}{n-v-1}$. If $x\leq y$ then $(x+y)^M\leq (2y)^M=2^My^M$, and similarly if $x\geq y$ then $(x+y)^M\leq 2^Mx^M$. It follows that $p^M\leq 2^M(x^M+y^M)$, proving part (a). 

For (b), we observe that
\[
p^M\leq p^4\leq (x+y)^4\leq ( 2\max\{x,y\})^4 = 16 \cdot
\max\{x,y\}^4. 
\]
Part (b) follows on noting that $x\leq v/n$ (since $v\leq n$). Finally suppose that 
 $3 \le v \le n-3$. Then $n \ge v+3 \ge v + 2 + \frac{2} {v-1}$ so
$n(v-1) \ge v^2 + v$ and hence $(n-v-1)v \ge n$, that is,
$\frac{v}{n} \ge \frac{1}{(n-v-1)}$. The last assertion now follows from part (b).
\end{proof}

Now we analyse  the effect of {\sc TraceCycle}  applied to elements of 
$\calR$.

\begin{proposition}\label{f2bound}
Let $G,n,m,r$ be as in one of the lines of Table~{\rm\ref{tbl:elements}}
and suppose that $12{(rn)}^s +6 \le n.$ 
Then, for a uniformly distributed random
element $g\in G$,
$$
\prob(\mbox{\sc TraceCycle}(g)\ =\mbox{\true} \mid 
g\in\calR)
 \le 
 \left( \frac{33}{8n^{1-s}}\right)^M.
$$
\end{proposition}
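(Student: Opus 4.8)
The plan is to bound the conditional probability by combining the definition of $\calR$ with the tracing estimate of Lemma~\ref{TraceTwo}. Recall that $\calR$ consists of those $g\in\calF$ (so $g$ has no $m$-cycle but $m\mid o(g)$) with $v=|\Del(g)|\le 4(rn)^s$. Since these are exactly the hypotheses needed for the tracing lemmas, the first step is to verify that the side condition $v\le n-k-1$ of Proposition~\ref{TraceOne} and Lemma~\ref{TraceTwo} is satisfied for every $g\in\calR$. For this we use $12(rn)^s+6\le n$ together with $k\le n/2$: we have $v\le 4(rn)^s$, and Lemma~\ref{lem:simple}(i) gives $(rn)^s/n<1/12$, so $v< n/3$; combined with $k\le n/2$ this is not immediately enough, so instead I would argue $v\le 4(rn)^s\le \tfrac{n-6}{3}<n-k-1$ using $k\le n/2$ and $n$ large (Lemma~\ref{lem:simple}(ii) gives $n\ge 156$), making the inequality routine.

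The second step is to apply Lemma~\ref{TraceTwo}(a) to a uniformly random $g\in G$ conditioned on $g\in\calR$. Strictly, Lemma~\ref{TraceTwo} is stated for $g$ uniform in $G$ subject to ``$g$ has no $m$-cycle and $v\le n-k-1$''; but its proof only uses that the proportion of ``good'' $k$-subsets for a \emph{fixed} such $g$ is at most $\tfrac{v^k}{n^k}+\tfrac1{n-v-1}$ (Proposition~\ref{TraceOne}), and then bounds $p^M$. So for any fixed $g\in\calR$ we get
\[
\prob(\tracecycle(g)=\true\mid g)\ \le\ 2^M\!\left(\Bigl(\tfrac{v^k}{n^k}\Bigr)^{\!M}+\Bigl(\tfrac1{n-v-1}\Bigr)^{\!M}\right),
\]
and averaging over $g\in\calR$ preserves this bound (it suffices to bound the right side uniformly over $v$ in the allowed range). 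Now bound each term. For the first term, $\tfrac{v}{n}\le \tfrac{4(rn)^s}{n}=4\cdot\tfrac{(rn)^s}{n}< \tfrac{4}{12}=\tfrac13$ by Lemma~\ref{lem:simple}(i), and since $k\ge 2$, $\bigl(\tfrac{v^k}{n^k}\bigr)^M\le \bigl(\tfrac{v}{n}\bigr)^{M}$ wait — more carefully, $\tfrac{v^k}{n^k}\le \tfrac{v}{n}\cdot\bigl(\tfrac{v}{n}\bigr)^{k-1}\le \tfrac{v}{n}$, so $\bigl(\tfrac{v^k}{n^k}\bigr)^{M}\le \bigl(\tfrac{v}{n}\bigr)^{M}\le \bigl(\tfrac{4(rn)^s}{n}\bigr)^{M}=\bigl(\tfrac{4 r^s}{n^{1-s}}\bigr)^{M}$. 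For the second term, $n-v-1\ge n-4(rn)^s-1$; using $12(rn)^s+6\le n$, i.e. $4(rn)^s\le \tfrac{n-6}{3}$, we get $n-v-1\ge n-\tfrac{n-6}{3}-1=\tfrac{2n+3}{3}\ge \tfrac{2n}{3}$, hence $\tfrac1{n-v-1}\le \tfrac{3}{2n}\le \tfrac{3}{2n^{1-s}}$ (as $n\ge 1$). Therefore the bracket is at most $\bigl(\tfrac{4r^s}{n^{1-s}}\bigr)^M+\bigl(\tfrac{3}{2n^{1-s}}\bigr)^M$.

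The final step is bookkeeping to reach the stated constant $\bigl(\tfrac{33}{8}\bigr)^M$. Multiplying by $2^M$ gives $\prob(\tracecycle(g)=\true\mid g\in\calR)\le \bigl(\tfrac{8r^s}{n^{1-s}}\bigr)^M+\bigl(\tfrac{3}{n^{1-s}}\bigr)^M$. Here one must use the constraint that ties $r$ and $s$ together: for the lines of Table~\ref{tbl:elements} with $r=1$ the first term is $\bigl(\tfrac{8}{n^{1-s}}\bigr)^M$; for $r=2,3$ one needs $8r^s$ bounded, and indeed with the admissible range of $s$ (for instance $s<\tfrac{M-1}{M}\le \tfrac34$ when $M=4$, or more simply $s\le 17/24$ as in the applications) one has $r^s\le 3^{3/4}<2.28$, so $8r^s<18.3$; in general one checks $8r^s+3\le 8\cdot(33/8 - \text{something})$... the cleanest route is: since $r^s\le r\le 3$ is too weak, use instead $r\le 3$ and $s<1$ only after noting $s$ is bounded away from $1$, giving $8r^s\le 8\cdot 3^{s}$, and then bound $\bigl(\tfrac{8\cdot3^s}{n^{1-s}}\bigr)^M+\bigl(\tfrac3{n^{1-s}}\bigr)^M\le \bigl(\tfrac{8\cdot3^s+3}{n^{1-s}}\bigr)^M$ (valid since $a^M+b^M\le(a+b)^M$ for $a,b\ge 0$, $M\ge1$), and verify $8\cdot 3^s+3\le 33/8\cdot n^{?}$ — no: one verifies the \emph{constant} $8\cdot 3^{s_{\max}}+3\le 33/8$? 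That is false numerically, so in fact the paper must be exploiting that $r^s$ appears and that for $r>1$ the relevant $m,n$ are large; the honest statement is that $8r^s+3 \le 33/8\cdot$(1) fails, so one instead keeps $r^s/n^{1-s}$ and re-expresses $r^s n^{s}=  (rn)^s\cdot r^{s-s}$... I expect the intended argument simply replaces $4r^s$ by using $4(rn)^s/n$ directly without splitting off $r$, writing everything over $n^{1-s}$ only after absorbing $r^s$ into the constant via the case analysis $r\in\{1,2,3\}$, yielding a constant at most $\tfrac{33}{8}$; the verification $8\cdot 3^{s}+3\le \tfrac{33}{8}\cdot\!$? Since this does not hold for all $s<1$, \textbf{the main obstacle is precisely pinning down which upper bound on $s$ (hence on $r^s$) is being used} so that $2^M(4r^s+ 3/2\cdot(\text{correction}))^M$ collapses to $(33/8)^M$ — I would resolve it by invoking the explicit constraint $s<\tfrac{M-1}{M}$ from (\ref{eq:ds}) together with $r\le 3$, checking $2\cdot 4 r^{s}\le 2\cdot4\cdot 3^{(M-1)/M}$ and $2\cdot\tfrac32\le 3$, and confirming numerically that their $M$-th-power sum is at most $(33/8)^M$ for all $M\ge 4$ (a single monotonicity check in $M$), which is the only genuinely delicate point.
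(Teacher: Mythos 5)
Your high-level plan (reduce to Proposition~\ref{TraceOne} via the definition of $\calR$, then raise the per-subset bound to the $M$-th power) is the same as the paper's, but the final constant-chasing step does not close, and you essentially acknowledge this yourself: the numerical check you propose ($8\cdot 3^{(M-1)/M}+3\le 33/8$, or any variant thereof) is false, since $8\cdot 3^{3/4}\approx 18.2$ already exceeds $33/8=4.125$. The gap comes from two avoidable losses. First, routing the argument through Lemma~\ref{TraceTwo}(a) introduces the factor $2^M$ from $(x+y)^M\le 2^M(x^M+y^M)$; the paper never does this, but instead bounds $p=x+y$ itself by $\tfrac{33}{8n^{1-s}}$ and then uses $p^M$ directly. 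Second, and more importantly, you bound $\tfrac{v^k}{n^k}\le\tfrac{v}{n}\le\tfrac{4r^s}{n^{1-s}}$, discarding the hypothesis $k\ge 2$. The paper uses
\[
\left(\frac{4r^s}{n^{1-s}}\right)^{k}\le\left(\frac{4r^s}{n^{1-s}}\right)^{2}<\frac{1}{3}\cdot\frac{4r^s}{n^{1-s}}<\frac{4}{n^{1-s}},
\]
where the middle step uses $\tfrac{4r^s}{n^{1-s}}<\tfrac13$ (from $12(rn)^s<n$) and the last step uses $r^s\le 3^s<3$; the spare factor $\tfrac13$ is exactly what absorbs $r^s$, which is the point you were missing. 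Similarly, the second term is bounded not by $\tfrac{3}{2n^{1-s}}$ but by $\tfrac{1}{n-4(rn)^s-1}\le\tfrac{1}{8(rn)^s+5}<\tfrac{1}{8r^sn^{s}}<\tfrac{1}{8n^{1-s}}$ (using $s>\tfrac12$, so $n^s>n^{1-s}$). Adding gives $\tfrac{4}{n^{1-s}}+\tfrac{1}{8n^{1-s}}=\tfrac{33}{8n^{1-s}}$, whence $p^M\le\bigl(\tfrac{33}{8n^{1-s}}\bigr)^M$ with no $2^M$ and no dependence on a case analysis in $r$ or on an upper bound for $s$ beyond $s<1$.

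One smaller point: your verification that $v\le n-k-1$ is fine in substance (the paper uses $n-k-1\ge n/2-1>4(rn)^s\ge v$), but it is needed as stated before invoking Proposition~\ref{TraceOne}, not merely Lemma~\ref{TraceTwo}; since you do check it, this part is not a gap, only the constant bookkeeping is.
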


\begin{proof}
By definition, for $g\in\calR$, $v=|\Del(g)|\leq 4(rn)^s$
and $g$ does not contain an $m$-cycle.
 By our assumptions on $n$ and $k$ and the hypothesis, we have $n-k-1 \ge
n/2 -1 > 4(rn)^s \ge v.$

Thus by Proposition~\ref{TraceOne}, the
proportion of $k$-subsets $\gamma$ such that $c_k(\gamma, g) = r_0m$, 
for some $r_0$ dividing $r$, is
at most $\frac{v^k}{n^k} + \frac{1}{n-v-1} \le \frac{{(4r^s)}^k}{n^{k(1-s)}}
+ \frac{1}{n - 4(rn)^s -1}$.

Now {\sc TraceCycle}$(g)=$ \true\ if and only if $c_k(\ga,g)=r_0m$,
for some $r_0$ dividing $r$,  for  each of  $M$
independent uniformly distributed random $k$-sets $\ga$ tested during the
algorithm. Thus, given $g\in\calR$, the probability of this occurring is
at most
\[
\left(\frac{{(4r^s)}^k}{n^{k(1-s)}}+\frac{1}{n-4(rn)^s-1}\right)^M.
\]
Now $12(rn)^s < n$, that is to say, $\frac{4r^s}{n^{1-s}}<\frac{1}{3}$.
Also  $k\ge2$, $r \le 3$ and $s<1$.
Therefore $(\frac{4r^s}{n^{1-s}})^k \le (\frac{4r^s}{n^{1-s}})^2 
<\frac{4r^s}{{3n^{1-s}}} < \frac{4}{n^{1-s}}.$
Also,  by assumption,
$n-4(rn)^s-1 \ge 8(rn)^s +5  >  8r^sn^s > 8r^sn^{1-s}$. 
Therefore, the probability that 
{\sc TraceCycle}$(g)=$ \true\  is at most 
\[
\left(\frac{4}{n^{1-s}}+\frac{1}{8r^sn^{1-s}}\right)^M 
\leq \left( \frac{33}{8n^{1-s}}\right)^M.
\]
\end{proof}

Next we analyse  the effect of {\sc TraceCycle}  applied to elements of 
$\calN_{good}$ (defined in Table~\ref{tbl:subsets}).

\begin{lemma}\label{lem:mcyc}
Let $G,n,m,r$ be as in one of the lines of Table~{\rm\ref{tbl:elements}}, and 
let $k_0$ be an integer satisfying $0\le k_0 \le k$. Let $g\in \calN$ and let $C$ be the
$m$-cycle contained in $g$. Then the number of
$k_0$-subsets of $C$ that can occur as
$\ga\cap C$, for a $k$-subset $\ga$ of $\Om$ such that $c_k(\ga,g)$ is not divisible by 
$m$, is at most
\begin{eqnarray*}
 \sigma_{k_0} &=& \left\{\begin{array}{ll}
                    1 & \textrm{if $k_0=0$ and $k\leq n-m$}\\
                   0 & \textrm{if }\ \gcd(m,k_0)=1\ \mbox{or if}\\
		  & k_0 < k-n+m\\
		 \omega(\gcd(m,k_0))\binom{\lfloor m/2\rfloor}{\lfloor k_0/2\rfloor} & \textrm{if } \gcd(m,k_0)>1\ \mbox{and}\\
		 &k_0\geq \max\{1,k-n+m\}  
\end{array}\right.\\
\end{eqnarray*}
where $\omega(d)$ is the number of distinct prime divisors of an integer 
$d$. 
\end{lemma}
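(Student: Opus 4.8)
The plan is to bound, for each value of $k_0$, the size of the set $\mathcal{A}$ of $k_0$-subsets $\gamma'$ of $C$ that arise as $\gamma\cap C$ for some $k$-subset $\gamma$ of $\Omega$ with $m\nmid c_k(\gamma,g)$, and then to check that $|\mathcal{A}|$ never exceeds the stated value of $\sigma_{k_0}$. I would dispose of the degenerate ranges first. When $k_0=0$ the only possible $\gamma'$ is $\emptyset$, so $|\mathcal{A}|\le 1$, which matches $\sigma_{k_0}=1$ in the range $k\le n-m$; and if $k_0<k-n+m$ (a range that also covers $k_0=0$ with $k>n-m$), then any $k$-subset $\gamma$ with $|\gamma\cap C|=k_0$ would have $|\gamma\setminus C|=k-k_0>n-m=|\Omega\setminus C|$, which is impossible, so $\mathcal{A}=\emptyset$ and $\sigma_{k_0}=0$. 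Throughout I use that $k_0\le k\le n/2\le n-6\le m=|C|$, so the hypothesis $k_0\le|C|$ needed below is automatic.

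For the remaining range $k_0\ge\max\{1,k-n+m\}$ the main step is a reduction. Given $\gamma'\in\mathcal{A}$ with a witnessing $k$-subset $\gamma$, I would invoke the elementary fact used already in the proof of Corollary~\ref{pc}, namely that $c_k(\gamma,g)$ is the least common multiple of the numbers $c_{|\gamma\cap D|}(\gamma\cap D,g)$ over all $g$-cycles $D$; in particular $c_{k_0}(\gamma',g)=c_{|\gamma\cap C|}(\gamma\cap C,g)$ divides $c_k(\gamma,g)$. Since $m\nmid c_k(\gamma,g)$, some prime $p\mid m$ has the property that its $p$-part $m_p$ does not divide $c_k(\gamma,g)$, and hence $m_p$ does not divide $c_{k_0}(\gamma',g)$ either. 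As $C$ is a $g$-cycle of length $t=m$ with $t_p=m_p$, Lemma~\ref{pc-1}(a) applies and shows that $\gamma'$ is a union of $Z(C,p)$-orbits and that $p\mid\gcd(k_0,m)$. Consequently, when $\gcd(m,k_0)=1$ no such prime exists, so $\mathcal{A}=\emptyset$ and $\sigma_{k_0}=0$.

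When instead $\gcd(m,k_0)>1$ — and then necessarily $k_0\ge 2$, since $\gcd(m,1)=1$ — the reduction above gives $\mathcal{A}\subseteq\bigcup_{p\,\mid\,\gcd(m,k_0)}\mathcal{B}_p$, where $\mathcal{B}_p$ is the set of $k_0$-subsets of $C$ that are unions of $Z(C,p)$-orbits. For a fixed prime $p\mid\gcd(m,k_0)$ the $Z(C,p)$-orbits partition $C$ into $m/p$ parts, each of size $p\ge 2$, so Proposition~\ref{npk}, applied with $\mathcal{U}=C$ (so that $u=m>1$ and $2\le k_0\le m$), yields $|\mathcal{B}_p|\le\binom{\lfloor m/2\rfloor}{\lfloor k_0/2\rfloor}$. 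Summing over the $\omega(\gcd(m,k_0))$ primes dividing $\gcd(m,k_0)$ gives $|\mathcal{A}|\le\omega(\gcd(m,k_0))\binom{\lfloor m/2\rfloor}{\lfloor k_0/2\rfloor}=\sigma_{k_0}$, completing the proof. I do not anticipate a genuine obstacle; the only thing needing care is the case bookkeeping — verifying that the three clauses defining $\sigma_{k_0}$ are mutually exclusive and exhaustive, and that the boundary values $k_0=0$, $k_0=1$ and $k_0=m$ come out correctly from the argument above.
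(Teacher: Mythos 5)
Your proposal is correct and follows essentially the same route as the paper: dispose of the ranges $k_0=0$ and $k_0<k-n+m$ by direct counting, then for each witnessing $\gamma$ produce a prime $p\mid\gcd(m,k_0)$ via Lemma~\ref{pc-1}(a) and bound the subsets that are unions of $Z(C,p)$-orbits, summing over the $\omega(\gcd(m,k_0))$ primes. The only cosmetic difference is that you invoke Proposition~\ref{npk} directly where the paper cites Lemma~\ref{pc-1}(b), which is just that proposition applied to the orbit partition.
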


\begin{proof}
Let $\sigma'$ be the number of
$k_0$-subsets of $C$ that can occur as
$\ga\cap C$, for a $k$-subset $\ga$ of $\Om$ such that $c_k(\ga,g)$ is not divisible by 
$m$. Note that, if $\gamma$ is such a $k$-subset, then $\gamma\setminus C$ is contained in the complement $\overline C$ of $C$ and hence $k=|\gamma|\leq k_0 + |\overline C|=k_0+n-m$. 
Thus if $k_0< k-n+m$ then $\sigma'=0$. Also if $k_0=0\geq k-n+m$, then $\gamma\cap C=\emptyset$ so $\sigma'\leq 1$. Suppose now that $k_0>0$ and $k_0\geq k- n+m$,
that is, $k_0\geq \max\{1,k-n+m\}$.

Let $\gamma$ be such that $c_k(\ga,g)$ is not divisible by $m$. 
Then $c_{k_0}(\ga\cap C,g)$ properly divides $m$, and hence
there exists a prime $p$ dividing $m$
such that the $p$-part $m_p$ does not divide $c_{k_0}(\ga\cap C,g)$.
By Lemma~\ref{pc-1}(a), $p$ divides $\gcd(m,k_0)$ (and
in particular if $\gcd(m,k_0)=1$ then $\sigma'=0$). 
If such a prime $p$ exists then, by
Lemma~\ref{pc-1}(b), the number of $k_0$-subsets $\ga\cap C$ such that 
 $m_p$ does not divide $c_{k_0}(\ga\cap C,g)$ is  at most $\binom{\lfloor
  m/2\rfloor}{ 
\lfloor k_0/2\rfloor}$. Finally there are at most $\omega(\gcd(m,k_0))$ 
primes $p$ to consider, and the proof is complete.
\end{proof}

\begin{proposition}\label{prop:mcyc}
Let $G,n,m,r$ be as in one of the lines of Table~{\rm\ref{tbl:elements}}
and suppose that $g\in \calN_{good}$, and $12{(rn)}^s + 6\le n$.
Then the proportion of
$k$-subsets $\ga$ of $\Om$ such that $c_k(\ga,g)\ne mr_0$, for any $r_0$ dividing
$r$, is at most  

$$\sum_{k_0=\max\{k-(n-m),0\}}^k \sigma_{k_0} \binom{n-m}{k-k_0}/\binom{n}{k}\le \sqrt{8k} \left(\frac{3k}{4m}\right)^{\lceil k/2\rceil},
$$
where $\sigma_{k_0}$ is as in Lemma~{\rm\ref{lem:mcyc}}. Moreover, for a
uniformly distributed random element $g\in G$, 
$$\prob(  {\tracecycle}(g)= \true \mid g\in {\calN_{good}})
\ge  \left( \frac{n-2}{n}\right)^M.$$
\end{proposition}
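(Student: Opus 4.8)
The plan is to count the ``bad'' $k$-subsets $\gamma$ of $\Omega$, namely those for which $c_k(\gamma,g)\neq mr_0$ for every $r_0\mid r$, and then to convert the resulting proportion estimate into the asserted probability bound.

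\emph{Step 1: reformulation and the first inequality.} Since $g\in\calN_{good}$ we have $o(g)\mid rm$, so $c_k(\gamma,g)\mid rm$ for every $k$-subset $\gamma$; hence $c_k(\gamma,g)=mr_0$ for some $r_0\mid r$ if and only if $m\mid c_k(\gamma,g)$. Let $C$ be the $m$-cycle of $g$ and put $k_0=|\gamma\cap C|$. Because $C$ is $g$-invariant, $c_{k_0}(\gamma\cap C,g)$ divides $c_k(\gamma,g)$, so if $\gamma$ is bad then $m\nmid c_{k_0}(\gamma\cap C,g)$. By Lemma~\ref{lem:mcyc} there are at most $\sigma_{k_0}$ possibilities for $\gamma\cap C$; and since $\gamma\setminus C$ lies in the complement of $C$, which has size $n-m$, there are $\binom{n-m}{k-k_0}$ possibilities for $\gamma\setminus C$, with $k_0\geq\max\{k-(n-m),0\}$. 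As $\gamma$ is recovered from the pair $(\gamma\cap C,\gamma\setminus C)$, summing over $k_0$ and dividing by $\binom nk$ gives the first inequality.

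\emph{Step 2: the bound $\sqrt{8k}\,(3k/(4m))^{\lceil k/2\rceil}$.} The crucial feature is $n-m\leq 6$, so the sum has at most seven nonzero terms, indexed by $k_0$ with $k-6\leq k_0\leq k$. The $k_0=0$ term contributes at most $\binom{n-m}{k}/\binom nk\leq(6/n)^k$, which is negligible. For $k_0\geq1$ with $\gcd(m,k_0)>1$ I would use $\sigma_{k_0}\leq\omega(\gcd(m,k_0))\binom{\lfloor m/2\rfloor}{\lfloor k_0/2\rfloor}$ and then, since $\lfloor k_0/2\rfloor\leq\lfloor k/2\rfloor$ and $k\leq n/2$,
\[
\binom{\lfloor m/2\rfloor}{\lfloor k_0/2\rfloor}\leq\binom{\lfloor n/2\rfloor}{\lfloor k/2\rfloor}<2\binom nk\Bigl(\tfrac{3k}{4n}\Bigr)^{\lceil k/2\rceil}\leq 2\binom nk\Bigl(\tfrac{3k}{4m}\Bigr)^{\lceil k/2\rceil}
\]
by Lemma~\ref{lem:Z}(b) and $m\leq n$; thus each such term is at most $2\,\omega(\gcd(m,k_0))\binom{n-m}{k-k_0}(3k/(4m))^{\lceil k/2\rceil}$. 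Bounding $\binom{n-m}{k-k_0}\leq\binom63=20$, $\omega(\gcd(m,k_0))\leq\log_2 k$, and using $\log_2 k<\sqrt{8k}$ for all $k\geq1$ together with the fact that several lines of Table~\ref{tbl:elements} — in particular those with $n-m$ largest — have $\gcd(m,6)=1$, so that $\sigma_{k_0}=0$ for most $k_0$ in the window, a careful accounting of the at most seven terms yields the stated bound. This accounting is the main obstacle: a naive estimate loses a factor of $\log k$ against the available $\sqrt k$, and to avoid it one should use the sharper estimate $\sigma_{k_0}\leq\sum_{p\mid\gcd(m,k_0)}\binom{m/p}{k_0/p}$ and the rapid decay of $\binom{m/p}{k_0/p}$ in $p$.

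\emph{Step 3: the conditional probability.} The procedure returns $\tracecycle(g)=\true$ exactly when each of the $M$ independent uniform random $k$-subsets $\gamma$ satisfies $c_k(\gamma,g)=mr_0$ for some $r_0\mid r$; for $g\in\calN_{good}$ this occurs with probability $(1-p)^M$, where $p$ is the proportion estimated above, so it suffices to show $p\leq2/n$, since then $\prob(\tracecycle(g)=\true\mid g\in\calN_{good})\geq(1-2/n)^M=((n-2)/n)^M$. For $k=2$ we argue directly from Step 1: $\sigma_1=0$ and $\sigma_2\leq\lfloor m/2\rfloor\leq n/2$, so $p\leq(\binom{n-m}{2}+n/2)/\binom n2\leq(30+n)/(n(n-1))\leq2/n$, using $n\geq156$ from Lemma~\ref{lem:simple}(ii). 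For $k\geq3$ we have $\lceil k/2\rceil\geq2$, and since $m\geq n-6$ and $k\leq n/2$ force $3k/(4m)<1$, the bound of Step~2 gives $p\leq\sqrt{8k}\,(3k/(4m))^{\lceil k/2\rceil}\leq\sqrt{8k}\,(3k/(4m))^{k/2}$. Writing $\psi(k)=n\sqrt{8k}\,(3k/(4m))^{k/2}$, a direct computation gives $(\log\psi)''(k)=(k-1)/(2k^2)>0$, so $\log\psi$ is convex on $[3,n/2]$ and $\psi(k)\leq\max\{\psi(3),\psi(\lfloor n/2\rfloor)\}$ for all integers $k$ in that range. Here $\psi(3)\leq 16.6\,n/(n-6)^{3/2}\leq2$ and $\psi(\lfloor n/2\rfloor)\leq 2n^{3/2}(3n/(8(n-6)))^{n/4}$, which is far below $2$, both for $n\geq156$. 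Hence $p\leq2/n$ in all cases, and the proof is complete.
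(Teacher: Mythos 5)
Your Step 1 is exactly the paper's argument, and your Step 3 is sound modulo Step 2: the reduction to showing $p\le 2/n$, the direct treatment of $k=2$, and the convexity of $\log\psi$ forcing the maximum to an endpoint of $[3,n/2]$ are all correct (the paper instead locates the unique minimum of $F(x)=(3x/(4m))^{x/2}$ and treats $k=3$ separately, but the two calculations are interchangeable). The genuine gap is Step 2, and you have in effect flagged it yourself: the chain of estimates you describe does not prove the inequality $\sum_{k_0}\sigma_{k_0}\binom{n-m}{k-k_0}\le\sqrt{8k}\,(3k/(4m))^{\lceil k/2\rceil}\binom{n}{k}$. Bounding $\binom{\lfloor m/2\rfloor}{\lfloor k_0/2\rfloor}$ by $\binom{\lfloor n/2\rfloor}{\lfloor k/2\rfloor}$ and then separately multiplying by $\binom{n-m}{k-k_0}\le\binom{6}{3}=20$ and summing up to seven terms produces a constant of order $280\,\omega$ in place of the target $\sqrt{8k}=2\sqrt{2k}$; with $\omega\le\log_2 k$ (or $\omega\le\sqrt{2k}$) this exceeds $\sqrt{8k}$ for every $k$ in the admissible range $2\le k\le n/2$ except astronomically large ones, and even a single nonzero term already overshoots by a factor of about $20$. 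The appeal to ``$\sigma_{k_0}=0$ for most $k_0$ in the window'' does not rescue this: $\gcd(m,6)=1$ only excludes the primes $2$ and $3$, and $m$ may have several prime divisors $\ge 5$, so several of the seven consecutive values $k_0$ can satisfy $\gcd(m,k_0)>1$. The ``sharper estimate'' you propose is never carried out, and in any case the displayed inequality is asserted for the specific $\sigma_{k_0}$ of Lemma~\ref{lem:mcyc}, so replacing them by smaller quantities would prove a different statement.

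The missing idea is to apply Lemma~\ref{lem:Z}(b) with the pair $(m,k_0)$ rather than $(n,k)$, i.e.
\[
\binom{\lfloor m/2\rfloor}{\lfloor k_0/2\rfloor}<2\binom{m}{k_0}\Bigl(\tfrac{3k_0}{4m}\Bigr)^{\lceil k_0/2\rceil},
\]
so that the factor $\binom{m}{k_0}$ is retained and the sum is absorbed by the Vandermonde identity $\sum_{k_0}\binom{m}{k_0}\binom{n-m}{k-k_0}\le\binom{n}{k}$; this eliminates both the crude factor $\binom{n-m}{k-k_0}\le 20$ and the multiplicity of terms. The factor $\sqrt{8k}$ then comes from $\omega(\gcd(m,k_0))\le\omega(k_0)\le\sqrt{2k_0}\le\sqrt{2k}$ together with the $2$ from Lemma~\ref{lem:Z}(b). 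Without this (or an equivalent device) the second displayed inequality of the proposition, and hence your derivation of $p\le 2/n$ for $k\ge 3$, is not established.
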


\begin{proof}
Let $C$ denote the $m$-cycle in $g$ and let $\ga$ be a 
$k$-subset  of $\Om$ such that $c_k(\ga,g)\ne mr_0$ for any $r_0$ dividing
$r$. By the definition of $\calN_{good}$, this implies that
$c_{k_0}(\ga\cap C,g)$ is not divisible by $m$, where  $k_0=|\ga\cap C|$. Now $0\leq k_0\leq\min\{k,m\}=k$, and moreover $k_0\geq k-(n-m)$ since
$\ga\subseteq (\ga\cap C)\cup(\Omega\setminus C)$. Given $\ga\cap C$, 
there are at most $\binom{n-m}{k-k_0}$ choices for
$\ga\setminus C$. Hence, by Lemma \ref{lem:mcyc}, the number  
of such $k$-subsets $\ga$ is at most 
\begin{equation}\label{eq:X}
X:=\sum_{k_0=\max\{k-n+m,0\}}^k \sigma_{k_0}
\binom{n-m}{k-k_0} 
\end{equation}
where $\sigma_0=1$,  and
$\sigma_{k_0}=\omega((\gcd(m,k_0))\binom{\lfloor m/2\rfloor}{\lfloor
  k_0/2\rfloor}$ for $k_0>0$. Now $\omega(\gcd(m,k_0))\le 
\omega(k_0)\le \sqrt{2 k_0}\le \sqrt{2k}$ (see for example, 
\cite[p. 395]{NivenZuckermanetal91}). Hence,
$X \le \sqrt{2} k \sum_{k_0=\max\{k-n+m,0\}}^k \binom{\lfloor
  m/2\rfloor}{\lfloor k_0/2\rfloor} \binom{n-m}{k-k_0}$ 
and by Lemma \ref{lem:Z}(b), we have,

\begin{eqnarray*}
X &\le& \sqrt{2k} \sum_{k_0=\max\{k-n+m,0\}}^k 2\binom{m}{k_0}\left(\frac{3k_0}{4m}\right)^{\lceil k_0/2\rceil} \binom{n-m}{k-k_0}\\
&\le& \sqrt{8k} \left(\frac{3k}{4m}\right)^{\lceil k/2\rceil}\sum_{k_0=\max\{k-n+m,0\}}^k \binom{m}{k_0}
\binom{n-m}{k-k_0}\\
&\le&\sqrt{8k} \left(\frac{3k}{4m}\right)^{\lceil k/2\rceil}\binom{n}{k}
\end{eqnarray*}
and hence the proportion $X/\binom{n}{k}\le p$ where $p:=\sqrt{8k} \left(\frac{3k}{4m}\right)^{\lceil k/2\rceil}$.

Now we consider the final assertion. Note that
{\sc TraceCycle}$(g)=$ \true\ if and only if, for 
each of the $M$ independent uniformly distributed random $k$-subsets $\ga$
tested, we have $c_k(\ga,g)= r_0m$ for some $r_0$ dividing $r$. 
The class $\calN_{good}$ is, for some lines of Table~\ref{tbl:elements}, 
a union of several conjugacy classes of elements of $S_n$, say $\calN_{good}
=\cup_\calC \calN(\calC)$. For $g\in\calN(\calC)$, the proportion $p(\calC)$ 
of $k$-subsets $\ga$ of $\Om$, such that $c_k(\ga,g)\ne r_0m$ for any 
$r_0$ dividing $r$, may depend on the class $\calC$, although, as we have 
shown above,  $p(\calC)\le p$ for all $\calC$. Thus, given
$g\in\calN(\calC)$, the probability that {\sc TraceCycle}$(g)=$ \true\ 
is $(1-p(\calC))^M\geq (1-p)^M$. This implies that 
$$
\prob(  {\tracecycle}(g)= \true \mid g\in {\calN_{good}})
\ge \left(1-p\right)^M.
$$
Thus to complete the proof it is sufficient to prove that
$p\leq \frac{2}{n}$ for some upper bound $p$ of $X/\binom{n}{k}$.

Note that, by Lemma~\ref{lem:simple}(ii), $m\geq n-6\geq 150$.
Suppose first that  $4\le k\le\frac{n}{2}$.
We consider the function  $F(x) =
\left(\frac{3x}{4m}\right)^{\frac{x}{2}}=e^{\frac{x}{2}\log \frac{3x}{4m}}$
on the interval $[4,\frac{n}{2}]$.
Note that  $\frac{3x}{4m}\leq \frac{3n}{8m} < 1$ and 
${\frac{k}{2}} \le {\lceil \frac{k}{2}\rceil}$, so
$F(k)\ge \left(\frac{3k}{4m}\right)^{\lceil k/2\rceil}$, and
hence $p\leq \sqrt{8k} F(k)$.
Differentiating we have $F'(x) = F(x) \frac{1}{2}\left( 
\log(\frac{3x}{4m}) + 1\right),$ and since $F(x) > 0$ for 
$x>0$, it follows that $F(x)$ has a unique minimum 
at $\log \frac{3x}{4m}=-1$, that is, when $x=\frac{4m}{3e}$ (which
may or may not lie in the interval $[4,\frac{n}{2}]$). 
Thus the maximum of $F(x)$ on the interval $[4,\frac{n}{2}]$ occurs at one of the endpoints. We claim that 
$\max\{ F(4), F(\frac{n}{2})\} < \frac{1}{n^{3/2}}.$ It follows from a proof of this claim that $p\leq  \sqrt{8k}F(k) \le 
\sqrt{8k} \frac{1}{n^{3/2}}\leq \frac{2}{n}$, since $k\leq \frac{n}{2}$.

Since $m\geq n-6\geq 150$, we have $m^2 > 9 n^{3/2}$,
which implies that $F(4)=(\frac{3}{m})^2<\frac{1}{n^{3/2}}$. Also $\frac{3n}{8m}
\le \frac{3}{8}+\frac{6}{8m} < \frac{1}{2}$, and $n^{3/2} < 2^{n/4}$. Then,   applying Lemma~\ref{lem:ns}(a), we find  
\begin{eqnarray*}
F(\frac{n}{2}) &= &  \left(\frac{3n}{8m}\right)^{n/4} <  
\left(\frac{1}{2}\right)^{n/4} < \frac{1}{n^{3/2}} \\
\end{eqnarray*}
proving the claim for $k\geq4$.
For the remaining cases where $k=2$ or 3, note that $\omega(\gcd(m,k_0))\le 1$ for $1\le k_0\le 3$, $\sigma_{k_0}=0$ when $k_0=1$,  $n\geq 156$, and $n-m\le 6$. 
If $k=2$ then by (\ref{eq:X}),
\begin{eqnarray*}
\frac{X}{\binom{n}{2}} &\le& \frac{\binom{6}{2}}{\binom{n}{2}} + \frac{\binom{\lfloor m/2\rfloor}{1}\cdot\binom{6}{0}}{\binom{n}{2}}
\le \frac{15\cdot 2}{155}\cdot\frac{1}{n} + \frac{m}{n-1}\cdot\frac{1}{n}
< \frac{2}{n}.
\end{eqnarray*}
If $k=3$ then, again  by (\ref{eq:X}),
\begin{eqnarray*}
\frac{X}{\binom{n}{3}} &\le& \frac{\binom{6}{3}}{\binom{n}{3}} + \frac{\binom{\lfloor m/2\rfloor}{1}\binom{6}{1}}{\binom{n}{3}}+\frac{\binom{\lfloor m/2\rfloor}{1}\binom{6}{0}}{\binom{n}{3}}\\
&\le& \frac{20\cdot 6}{154\cdot 155}\cdot\frac{1}{n} + \frac{3\cdot m (6+1)}{154 (n-1)}\cdot\frac{1}{n} 
< \frac{2}{n}.
\end{eqnarray*}

\end{proof}

\section{Bounding $\calS_0$}\label{sec:S0}

Let $G,m,n,r$ be as in one of the lines of
Table~\ref{tbl:elements}, so $G$ is $A_n$ or $S_n$.
To estimate the probability of a
uniformly distributed random element $g\in G$ being in $\calS_0$ or
$\calS_1^+$,  and $\tracecycle (g) = \true$ we use
the following  result from \cite{NieP}.  Recall the definitions  of an
$s$-small  and an  $s$-large cycle  and of $v$ 
from  Notation~\ref{notation}. 
Let $i\in\{1,2,3\}$.
In the next two sections we use the following notation:

\begin{notation}\label{not:p}
\begin{enumerate}
\item For $v\ge1$ let $P(v,rm)$ denote  the  proportion  of
elements  of $S_v$  of order  dividing $rm$, and let  $P(0,rm)=1$.
\item For $v\ge1$ let $P_0(v,rm)$ denote  the proportion of  elements of $S_v$ of
  order dividing  
$rm$, all  of whose  cycles are $s$-small, and let $P_0(0,rm)=1$.
\item Let $P_1^+(v,rm)$  denote the
proportion of elements $g\in S_v$ of order dividing $rm$, and such that
$g$ has exactly one $s$-large cycle of 
length $d$, say, where in addition, $d$ satisfies ${(rn)}^s \le d < v-3{(rn)}^s$. 
\item Let $D$ denote the
set of all divisors of $rm$ which are at most $n$.
\item Let  $D_1^+(v)$ denote the set of all divisors $d$ of
$rm$ satisfying ${(rn)}^s \le d < v-3{(rn)}^s$.  
\end{enumerate}
\end{notation}

Note that $r=1$  or $r$ is a prime. Hence
the number $d(rm)$ of positive divisors of $rm$ is at most $2d(m)$, as
$d|rm$ if and only if either $d|m$ or $d = r d_0$ and $d_0|m.$

\begin{remark}\label{rem:p0}
{\rm
The following result is
essentially  \cite[Lemma 2.4]{NieP}.
Suppose that  $s$, $\delta$ and $c_\delta$ are
as in Notation~$\ref{notation}$. In particular, $s > \delta.$
In Lemma~\ref{lem:p0} we may use as $a_\delta'$ any constant such that
$a_\delta' \ge \frac{5}{4}(1 + 3\frac{c_\delta}{{(rm)}^{s-\delta}} + 
\left(\frac{c_\delta}{{(rm)}^{s-\delta}}\right)^2)$  for all 
sufficiently large values of $rm$, say $rm \ge
m_0$. These conditions hold in particular for $a_\delta',m_0$ in one of the lines of Table~\ref{tbl:x}. Note that, for the proof of Theorem~\ref{main-theorem}, we have $n\geq156$ by Lemma~\ref{lem:simple}(ii), so $rm\geq 150$, as in line 2 of Table~\ref{tbl:x}. 
}
\end{remark}

\begin{table}[ht]
\begin{center}
\begin{tabular}{p{5cm}ccl}
\toprule
$a_\delta'$ & & $m_0$  \\
\midrule
$25/4$ & & $c_\delta^{1/(s-\delta)}$ \\
$a_\delta$ as in (\ref{eq:adelta})& & $150$ \\
\bottomrule
\end{tabular}
\vspace*{.3cm}
\caption{Possible values of $a_\delta'$ for Lemma~\ref{lem:p0}}\label{tbl:x}
\end{center}
\end{table}

\begin{lemma}\label{lem:p0}
Let $m,n,r$ be as in one of the lines of Table~$\ref{tbl:elements}$. Further,
let $v\ge 16$ and $s$, $\delta$, $c_\delta$ and $a_\delta$ be as in 
Notation~$\ref{notation}$. Let $a_\delta'$ and $m_0$ be as in one of the lines of Table~$\ref{tbl:x}$ (or more generally as in Remark~$\ref{rem:p0}$) and suppose that $rm\geq m_0$. 
Then
{\openup 3pt
\begin{enumerate}
\item[(a)]
${\displaystyle P_0(v,rm) < \frac{a_\delta' d(rm)r^{2s}n^{2s}}{v^3}.}$
\item[(b)] If $3{(rn)}^s < v$ then
${\displaystyle P_0(v,rm)  \le
\frac{a_\delta' d(rm)^2 r^{2s}n^{2s}}{v(v-{(rn)}^s)^3}.}$
\item[(c)]
${\displaystyle P_1^+(v,rm)=\sum_{d\in D_1^+(v)}\frac{1}{d}P_0(v-d,rm)}$.
\end{enumerate}}
\end{lemma}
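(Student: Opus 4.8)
This is Lemma~\ref{lem:p0}, and all three parts concern the proportion $P_0(v,rm)$ of elements of $S_v$ of order dividing $rm$ with all cycles $s$-small, together with the closely related quantity $P_1^+(v,rm)$. The plan is to reduce everything to the stated ``essentially \cite[Lemma 2.4]{NieP}'' input described in Remark~\ref{rem:p0}, namely a bound of the shape $P_0(v,rm)\le \tfrac54\bigl(1+3\tfrac{c_\delta}{(rm)^{s-\delta}}+(\tfrac{c_\delta}{(rm)^{s-\delta}})^2\bigr)\cdot(\text{something})/v^3$ for $v\ge16$, and then to massage the ``something'' into the exact form appearing in (a). The constants $a_\delta'$ and $m_0$ are set up precisely so that, for $rm\ge m_0$, the leading factor $\tfrac54(1+3\tfrac{c_\delta}{(rm)^{s-\delta}}+(\tfrac{c_\delta}{(rm)^{s-\delta}})^2)$ is at most $a_\delta'$; this is an immediate check from the two lines of Table~\ref{tbl:x} (for line~1 one uses $rm\ge c_\delta^{1/(s-\delta)}$ so each $c_\delta/(rm)^{s-\delta}\le1$, giving $\tfrac54\cdot3\le\tfrac{25}4$; for line~2 it is the definition (\ref{eq:adelta}) with $150$ in place of $rm$, using $s-\delta>0$ so larger $rm$ only helps).

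\emph{Part (a).} First I would recall why the relevant count reduces to divisors of $rm$ that are $s$-small, i.e. at most $(rn)^s$: an element of $S_v$ of order dividing $rm$ is a product of cycles whose lengths are divisors of $rm$, and the all-cycles-$s$-small condition restricts those cycle lengths to $D$-members below $(rn)^s$. The standard generating-function / Bessel-type estimate from \cite{NieP} then gives a bound in which the numerator is governed by $\bigl(\sum_{d\in D,\ d<(rn)^s} 1\bigr)^2$ times a factor of the largest admissible cycle length squared; crucially $\#\{d\in D: d<(rn)^s\}\le d(rm)$ and the largest such $d$ is $<(rn)^s$, so $n^{2s}r^{2s}$ appears, and the $1/v^3$ comes from the three ``missing'' points needed to make the all-small configuration have order exactly dividing $rm$ — this is exactly the exponent $3$ that makes $\ell$ in (\ref{eq:ell}) include the term $3-2s-2\delta$. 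Pulling out the $a_\delta'$ factor as above yields (a).

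\emph{Part (b).} Here the hypothesis $3(rn)^s<v$ lets one sharpen the denominator. The idea is to split off the contribution: either all cycles have length $<(rn)^s$ — handled by (a) — or we argue more carefully. Actually the cleaner route is: apply (a) not to $v$ but to $v-(rn)^s$ (valid since $v-(rn)^s>2(rn)^s\ge16$ once $n$ is large enough by Lemma~\ref{lem:simple}(ii), which I should check, or alternatively just require $v-(rn)^s\ge16$ within the bookkeeping), and absorb the ratio $v/(v-(rn)^s)$ and an extra $d(rm)$ factor coming from a union bound over the position/length of a distinguished cycle. The target denominator $v(v-(rn)^s)^3$ and the squared divisor count $d(rm)^2$ are both consistent with this ``remove one $s$-small cycle, then apply (a)'' manoeuvre, so I would organise the computation that way; the $v$ in the denominator is just the $1/d$ weight ($d\ge1$, but more usefully $d$ ranges over values giving a factor at least comparable to $v$ — rather, it is the extra power of $v$ from summing $\sum_d \frac1d$ crudely, bounded using $d\le v$). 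The precise allocation of the two $d(rm)$ factors between ``counting $s$-small divisors below $(rn)^s$'' and ``choosing the distinguished structure'' is the fiddly part.

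\emph{Part (c).} This is a clean combinatorial identity and I would prove it directly: an element $g\in S_v$ of order dividing $rm$ with exactly one $s$-large cycle, of length $d$ with $(rn)^s\le d<v-3(rn)^s$ (i.e.\ $d\in D_1^+(v)$), is obtained by choosing the $d$-point support of the large cycle ($\binom{v}{d}$ ways), the cyclic order on it (contributing the factor $1/d$ after dividing by $|S_v|=v!$ and $(v-d)!$ via $\binom{v}{d}(d-1)!/v!=1/d$ — more precisely $\binom{v}{d}\cdot(d-1)!\cdot(v-d)!\,P_0(v-d,rm)/v! = \tfrac1d P_0(v-d,rm)$), and an element of $S_{v-d}$ of order dividing $rm$ with all cycles $s$-small (proportion $P_0(v-d,rm)$); summing over $d\in D_1^+(v)$ gives the stated formula. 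I should note that $v-d>3(rn)^s$ forces the remaining part to be genuinely nonempty and that the condition $d<v-3(rn)^s$ together with $d\ge(rn)^s$ is exactly what pins down $D_1^+(v)$, so the sum is over precisely the right divisor set.

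\emph{Main obstacle.} The substantive content is entirely in transcribing the \cite{NieP} estimate into the exact bookkeeping of (a) and (b) — in particular keeping track of which combinatorial choices produce the factor $d(rm)$ versus $d(rm)^2$, why the power of $v$ (resp.\ $v-(rn)^s$) in the denominator is $3$, and confirming the $v\ge16$ (resp.\ $v-(rn)^s\ge16$) hypotheses under which the borrowed lemma applies. Parts (a) and (b) are where I expect to spend all the effort; part (c) is a routine identity.
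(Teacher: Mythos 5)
Your part (c) matches the paper's argument exactly (the count $\binom{v}{d}(d-1)!\,(v-d)!\,P_0(v-d,rm)/v!=\tfrac1d P_0(v-d,rm)$, summed over $d\in D_1^+(v)$), and your part (a) is, like the paper's, essentially a citation of \cite[Lemma~2.4]{NieP}: the paper quotes from that proof the bound $P_0(v,rm)\le d(rm)(rm)^{2s}\bigl(1+3c_\delta(rm)^{\delta-s}+(c_\delta(rm)^{\delta-s})^2\bigr)/\bigl(v(v-1)(v-2)\bigr)$ for $v\ge3$, then uses $m\le n$ and $v(v-1)(v-2)>\tfrac45 v^3$ for $v\ge16$ (this is precisely where the $\tfrac54$ in $a_\delta'$ is spent). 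Your absorption of the prefactor into $a_\delta'$ via Table~\ref{tbl:x} is the intended reading of Remark~\ref{rem:p0}.

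The genuine gap is in part (b), specifically in how you produce the factor $\tfrac1v$. The correct mechanism is the recursion $P_0(v,rm)=\tfrac1v\sum_{d\in D_s}P_0(v-d,rm)$, where $D_s$ is the set of divisors $d$ of $rm$ with $d<\min\{v,(rn)^s\}$; this is \cite[Lemma~2.3(a)]{NieP} and is the standard ``condition on the cycle containing a fixed point'' identity, in which each term carries weight exactly $\tfrac1v$ because $\binom{v-1}{d-1}(d-1)!\,(v-d)!/v!=\tfrac1v$. One then applies (a) to each $P_0(v-d,rm)$ (legitimate since $v-d>3(rn)^s-(rn)^s=2(rn)^s>24\ge16$, using $n\ge156$ from Lemma~\ref{lem:simple}(ii)), bounds $(v-d)^3\ge(v-(rn)^s)^3$, and uses $|D_s|\le d(rm)$ for the second factor of $d(rm)$. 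Your proposal instead attributes the $\tfrac1v$ to ``the $1/d$ weight'' or to ``summing $\sum_d\frac1d$ crudely, bounded using $d\le v$'' --- but $d\le v$ gives $\tfrac1d\ge\tfrac1v$, which is the wrong direction, so that route cannot yield the claimed denominator; the divisors $d$ here are small (less than $(rn)^s$), so $\sum_d\tfrac1d$ is not $O(1/v)$ by any crude bound. Your alternative of ``apply (a) to $v-(rn)^s$ and absorb the ratio $v/(v-(rn)^s)$'' likewise does not manufacture the extra full power of $v$. Without the exchangeability recursion the denominator $v(v-(rn)^s)^3$ is not reachable, so part (b) as proposed does not close.
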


\begin{proof}
This result follows from  \cite[Lemma~2.4]{NieP} and its proof.
A direct application of  \cite[Lemma~2.4]{NieP} would require that $rm\ge v$, 
which we cannot guarantee to hold. However,
the proof of that lemma shows, without the assumption that  $rm\ge v$, that
$$P_0(v,rm) \le
\frac{d(rm){(rm)}^{2s}(1+3c_{\delta}{(rm)}^{\delta-s}+(c_{\delta}{(rm)}^{\delta-s})^2)}{v(v-1)(v-2)}$$  
whenever $v\ge 3$. Statement (a) follows from this,
since  $m\le n$,  $\delta < s$ and, for $v\ge 16$,
$v(v-1)(v-2)>\frac{4}{5}v^3$.
To prove (b) we
let $D_s$ denote the set of all divisors $d$ of $rm$ such that $d <
\min\{ v,{(rn)}^s\}$.  By \cite[Lemma~2.3(a)]{NieP}  we have
that
$P_0(v, rm) = \frac{1}{v} \sum_{d\in D_s} P_0(v-d, rm)$, where $P_0(j,m)
= 0$ for $j \le 0.$
Since, using Lemma~\ref{lem:simple}(ii), 
$v-d>3{(rn)}^s-{(rn)}^s>24$ for $d\in D_s$,  we have by (a) that
$P_0(v-d,rm) \le \frac{a_\delta' d(rm) r^{2s}n^{2s}}{(v-d)^3}$.
Thus 
$P_0(v, rm) 
\le \frac{1}{v} \sum_{d\in D_s} \frac{a_\delta' d(rm) r^{2s}n^{2s}}{(v-d)^3}.$
Since
$v-d > v - r^sn^s>0$ 
for $d \in D_s,$ 
and $|D_s| \le d(rm)$, we have
$P_0(v, rm)  \le  \frac{a_\delta' d(rm)^2 r^{2s}n^{2s}}{v(v-{(rn)}^s)^3}.$ 

Finally, we prove (c).  
The number of permutations in $S_v$ of order dividing $rm$
with exactly one $s$-large cycle
of a given length, $d$ say, where $d$ divides $rm$ and ${(rn)}^s \le d
< v - 3{(rn)}^s$
is $\binom{v}{d} (d-1)! P_0(v-d,rm) (v-d)!$.  Hence the proportion
in $S_v$ of such permutations is 
$\frac{1}{d} P_0(v-d,rm)$. Summing over all $d\in D_1^+(v)$ yields the
desired result. 
\end{proof}

\begin{proposition}\label{prop:f0}
Let $G, m, n, r$ be as in one of the lines of Table~$\ref{tbl:elements}$.
If $12{(rn)}^s  + 6\le n$ and
${(rn)}^s\log(n) \le n$ then, for a uniformly distributed random element
$g\in G$,
$$\prob( g\in \calS_0\cap G \mbox{\ and\ } \mbox{\sc TraceCycle}(g) = 
\true ) \le 
a_{\delta} d(rm)^2 r^{2s}\frac{72}{n^{3-2s}}
$$
where $a_\delta$ is as in $(\ref{eq:adelta})$.
\end{proposition}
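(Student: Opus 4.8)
The plan is to condition on the size $v=|\Delta(g)|$ and to bound separately the probability that a uniformly random $g\in G$ lies in $\calS_0$ with $|\Delta(g)|=v$, and the conditional probability that $\tracecycle$ then returns \true. The first factor is controlled by the decomposition of $\Omega$ into the $g$-invariant sets $\Delta(g)$ and $\Sigma(g)$ together with Lemma~\ref{lem:p0}, and the second by Lemma~\ref{TraceTwo}; the work is in summing the product over $v$.

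Fix $v>4(rn)^s$; by Lemma~\ref{lem:simple} we then have $v>48>16$ and $3(rn)^s<v$. If $g\in\calS_0\cap G$ with $|\Delta(g)|=v$, then $\Omega=\Delta\sqcup\Sigma$ with $|\Delta|=v$, every $g$-cycle inside $\Delta$ has length dividing $rm$ and less than $(rn)^s$, and no $g$-cycle inside $\Sigma$ has length dividing $rm$. Counting such $g$ in $S_n$ --- choose $\Delta$ in $\binom nv$ ways, the restriction $g|_\Delta$ in $v!\,P_0(v,rm)$ ways, and $g|_\Sigma$ in at most $(n-v)!$ ways --- and dividing by $|G|$ gives
\[
\prob\bigl(g\in\calS_0\cap G\text{ and }|\Delta(g)|=v\bigr)\ \le\ 2\,P_0(v,rm),
\]
where the factor $2$ is needed only when $G=A_n$. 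Since $rm\ge n-6\ge150$ (Lemma~\ref{lem:simple}(ii)), Remark~\ref{rem:p0} allows $a_\delta'=a_\delta$ in Lemma~\ref{lem:p0}, and Lemma~\ref{lem:p0}(b) then yields $P_0(v,rm)\le a_\delta d(rm)^2 r^{2s}n^{2s}/\bigl(v(v-(rn)^s)^3\bigr)$.

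Every $g\in\calS_0$ contains no $m$-cycle and has $v>48\ge3$. When $v\le n-k-1$ (which, as $k\ge2$, entails $3\le v\le n-3$), the final assertion of Lemma~\ref{TraceTwo} bounds $\prob(\tracecycle(g)=\true)$ by $16(v/n)^4$, uniformly over all such $g$. When $v>n-k-1$, so $u=|\Sigma(g)|\le k$, I use only $\prob(\tracecycle(g)=\true)\le1$: there are at most $k+1\le\frac n2+1$ such values of $v$, each satisfying $v\ge n-k\ge\frac n2$ and hence (as $(rn)^s<n/12$) $v-(rn)^s>\tfrac56 v$, so $P_0(v,rm)=O\bigl(d(rm)^2 r^{2s}n^{2s-4}\bigr)$ and, summed over this range with an integral estimate for $\sum_{v>n-k-1}v^{-4}$, these terms contribute only $O\bigl(d(rm)^2 r^{2s}n^{2s-3}\bigr)$ with a small constant.

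Putting this together,
\[
\prob(g\in\calS_0\cap G\text{ and }\tracecycle(g)=\true)\ \le\ \sum_{4(rn)^s<v\le n-k-1}32(v/n)^4P_0(v,rm)\ +\ \sum_{n-k-1<v\le n}2P_0(v,rm),
\]
and substituting the $P_0$-bound turns the first sum into $\frac{32\,a_\delta d(rm)^2 r^{2s}n^{2s}}{n^4}\sum_v\frac{v^3}{(v-(rn)^s)^3}$, in which $v>4(rn)^s$ forces $\frac{v}{v-(rn)^s}<\frac43$, so each summand is at most $(4/3)^3$ and there are fewer than $n$ of them. Every contribution is thus of the form $(\text{absolute constant})\cdot a_\delta d(rm)^2 r^{2s}n^{2s-3}$, and the remaining numerical accounting --- using $n\ge156$, $(rn)^s<n/12$ and $(rn)^s\log n\le n$ to tame the tails --- shows the total is at most $a_\delta d(rm)^2 r^{2s}\cdot\frac{72}{n^{3-2s}}$. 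The crux is precisely that the range $v>n-k-1$ cannot be excluded a priori --- an element of $\calS_0$ can have $|\Sigma(g)|$ as small as a few points --- so it must be handled on its own, after which the various numerical constants (the $2$ from $A_n$, the $16$ from Lemma~\ref{TraceTwo}, the factor $(4/3)^3$, and the tail) all have to be shown to fit under $72$.
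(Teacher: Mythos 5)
Your overall strategy is exactly the paper's: decompose $\calS_0$ by $v=|\Delta(g)|$, bound $\prob(g\in\calS_0(v)\cap G)$ by $P_0(v,rm)$ via the $\Delta/\Sigma$ factorisation and Lemma~\ref{lem:p0}(b), split the sum at $v=n-k-1$ using Lemma~\ref{TraceTwo}(b) on one side and the trivial bound $1$ on the other, and sum. However, the numerical accounting you defer to at the end cannot be completed from the bounds you actually state, so the constant $72$ is not established. Two losses compound. First, the factor $2$ for $G=A_n$ is avoidable and must be avoided: in the $A_n$ lines of Table~\ref{tbl:elements} the integer $rm$ is odd, so every permutation of $\Delta$ of order dividing $rm$ has all cycles of odd length and is therefore even; consequently $g^{\Sigma}$ has its parity forced, the count of choices for $g^{\Sigma}$ is $(n-v)!/|S_n:G|$ rather than $(n-v)!$, and one gets $|\calS_0(v)\cap G|\le |G|\,P_0(v,rm)$ with no factor $2$. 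Second, your estimate of the main sum --- each summand at most $(4/3)^3$ and fewer than $n$ summands --- gives
$$\frac{32\,a_\delta d(rm)^2 r^{2s}n^{2s}}{n^{4}}\cdot\frac{64}{27}\,n \;=\;\frac{2048}{27}\cdot\frac{a_\delta d(rm)^2 r^{2s}}{n^{3-2s}}\;>\;\frac{75.8\,a_\delta d(rm)^2 r^{2s}}{n^{3-2s}},$$
so the first sum alone already exceeds the target $72/n^{3-2s}$ before the tail $\Sigma_2$ is added. Either fix alone would rescue the argument: dropping the factor $2$ makes your crude $(4/3)^3$ bound yield roughly $37.9+\Sigma_2<72$, while keeping the factor $2$ would require the sharper integral estimate of Lemma~\ref{lem:int1} (as the paper uses, obtaining $66.2$ without the factor $2$) --- and even then $2\times 66.2>72$ fails. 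So as written the proof has a genuine gap: the claimed ``remaining numerical accounting'' is false for the bounds you have set up, and the parity observation for $A_n$ is the missing ingredient.
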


\begin{proof}
The set $\calS_0 = \dot\cup \calS_0(v)$, where
$\calS_0(v)$ is the set of all $g\in\calS_0$ with $|\Del(g)| = v$,
where $v$ ranges over all integers satisfying  $4{(rn)}^s<v\leq n$.

For $g\in \calS_0(v)$, the restriction $g^{\Delta(g)}$
of $g$ to $\Del(g)$ is a
permutation in $\Sym(\Delta(g))$ of order dividing $rm$ with all
cycles of length less than ${(rn)}^s$.  Consider a fixed $v$-set $\Delta$.
If $G=S_n$, then all elements of $\Sym(\Delta)$ are induced by
permutations in $G$. On the other hand if $G=A_n$, then one of the lines 4-9 of
Table~\ref{tbl:elements} holds and hence  
$rm$ is odd; thus all  elements
of $\Sym(\Delta)$ of order dividing $rm$ actually 
lie in $\Alt(\Delta)$ and are therefore induced by elements of $G$.
Therefore in all cases the number of possibilities for the restriction $g^\Delta$ of
elements $g\in G$, for a
given $v$-subset $\Delta = \Delta(g)$, is $v!P_0(v,rm)$ and 
the restriction $g^\Sigma$ where $\Sigma = \Omega \backslash \Delta$
lies in ${\rm Sym}(\Sigma)$ or ${\rm Alt}(\Sigma)$ according
as $G=S_n$ or $A_n$, respectively. 
Hence
 the number
of permutations in
$\calS_0\cap G$ corresponding to this value of $v$ satisfies 
$$
|\calS_0(v)\cap G| \le \binom{n}{v} v! P_0(v,rm)
\frac{(n-v)!}{|S_n:G|} = n! \frac{P_0(v,rm)}{|S_n:G|} = |G|\cdot P_0(v,rm).
$$  
As $3{(rn)}^s < 4{(rn)}^s < v$, we have $n\geq156$ by Lemma~\ref{lem:simple}(ii) so $rm\geq150$, and hence we can 
apply Lemma~\ref{lem:p0}(b) with $a_\delta'=a_\delta$. Thus, for a random $g\in G$,
$$\prob( g\in \calS_0(v)\cap G) \le P_0(v,rm) \le 
\frac{a_\delta d(rm)^2 r^{2s}n^{2s}}{v(v-{(rn)}^s)^3}.$$
For any $g \in S_n$ with $|\Delta(g)| = v$ and $v \le n - k - 1$, 
we have in particular $3 \le v \le n-3$. Hence by
Lemma~\ref{TraceTwo}(b), given that $g\in\calS_0(v)\cap G$ with $v\leq n-k-1$,
$$\prob( \mbox{\sc TraceCycle}(g) = \true
 ) \le  16
\left(\frac{v}{n}\right)^4.$$ 
Hence, if $v \le n - k -1$, then the probability that $g\in \calS_0(v)$ and
$\tracecycle(g)=\true$ is at most
$
a_\delta d(rm)^2 r^{2s}n^{2s} \frac{16}{v(v-{(rn)}^s)^3}
\left(\frac{v}{n}\right)^{4}$; and if  $n-k-1 < v \le n$, this probability
is at most
$
a_\delta d(rm)^2 r^{2s}n^{2s} \frac{1}{v(v-{(rn)}^s)^3}.
$ 
Summing over the  values of $v$,  we find
\begin{equation*}
\prob( g\in \calS_0 \cap G\mbox{\ and\ } \mbox{\sc 
TraceCycle}(g) = \true )\le \Sigma_1 + \Sigma_2 
\end{equation*}
where
\begin{eqnarray*}
 \Sigma_1 &=& 16 a_\delta d(rm)^2 \frac{r^{2s}n^{2s}}{n^{4}}\sum_{4{(rn)}^s < v\leq n-k-1}
\frac{ v^{3}}{(v-{(rn)}^s)^3},\\
\Sigma_2 &=& 
a_\delta d(rm)^2 r^{2s}n^{2s}\sum_{n-k\le v\leq n} \frac{1}{(v-{(rn)}^s)^4}. \\
\end{eqnarray*}

We first consider $\Sigma_1$ and apply
Lemma~\ref{lem:int1} with $a=4{(rn)}^s$, $c = {(rn)}^s$, 
$t=\ell=3$, and $n-k-1$ in place of $n$. We also use $a-1-c = 3{(rn)}^s-1 > 2{(rn)}^s$, and find

\begin{eqnarray*} 
\Sigma_1 & = & 16 a_\delta d(rm)^2 \frac{r^{2s}n^{2s}}{n^{4}}\sum_{4{(rn)}^s < v\leq n-k-1} 
\frac{ v^{3}}{(v-{(rn)}^s)^3} \\ 
&<& 16 a_\delta d(rm)^2 \frac{r^{2s}n^{2s}}{n^{4}}
\left(\frac{{(rn)}^{3s}}{8 {(rn)}^{2s}} + \frac{3{(rn)}^{2s}}{2{(rn)}^s} \right.\\ 
&&
\left. +  \binom{3}{2} {(rn)}^{s} \log (n-k-1) +  \binom{3}{3} 
\frac{\left((rn)^{s}\right)^0(n-k-{(rn)}^s)^{1}}{1}\right)\\
&<& 16 \frac{a_\delta d(rm)^2 r^{2s} }{n^{3-2s}}
\left(\frac{{(rn)}^s}{8n} + \frac{3{(rn)}^s}{2n}
+  \frac{3\log(n){(rn)}^s}{n }
 +  \frac{n-{(rn)}^s}{n}
\right).\\ 
\end{eqnarray*}

The assumption $12{(rn)}^s + 6 \le n$ implies by Lemma~\ref{lem:simple}(i) that
 ${(rn)}^s/n < 1/12$.
Also, by our hypothesis, ${(rn)}^s\log(n) \le n$ and,
therefore, 
$\Sigma_1 < \frac{16 a_\delta d(rm)^2r^{2s}}{n^{3-2s}}  (\frac{1}{96} +
\frac{3}{24} + 3 + 1) 
< \frac{66.2 a_\delta d(rm)^2r^{2s}}{n^{3-2s}} .$
Finally, we estimate $\Sigma_2.$
$$\Sigma_2 = 
a_\delta d(rm)^2 r^{2s}n^{2s}\sum_{n-k \le v\leq n} \frac{1}{(v-{(rn)}^s)^4}
.$$
Since $k \le n/2,$ and since $\frac{1}{v-{(rn)}^s}$ is decreasing for
$v$ in the interval $[n-k-2, n]$, we have by Lemma~\ref{lem:inte} and
Lemma~\ref{lem:simple} that
\begin{eqnarray*}
\Sigma_2 
&<&  a_\delta d(rm)^2 r^{2s}n^{2s}\int_{n/2-1}^n \frac{1}{(v-{(rn)}^s)^4}{\rm d}v\\
&=& a_\delta d(rm)^2 r^{2s}n^{2s} \left[\frac{-1}{3(v-{(rn)}^s)^3}\right]_{n/2-1}^n\\
&<& a_\delta d(rm)^2 r^{2s}n^{2s} \frac{1}{3(n/2-1-{(rn)}^s)^3}\\
&=& a_\delta d(rm)^2 r^{2s}n^{2s} \frac{8}{3n^3(1-2/n-2{(rn)}^s/n)^3}\\
&<& a_\delta d(rm)^2 r^{2s}n^{2s} \frac{8}{3n^3(1-2/156-2/12)^3}\\
&<& 4.83 \, a_\delta d(rm)^2  r^{2s}\frac{1}{n^{3-2s}}.\\
\end{eqnarray*}

%
%

Adding the upper bounds for $\Sigma_1$ and $\Sigma_2$ 
we find that 
$$\prob( g\in \calS_0 \cap G \mbox{\ and\ } \mbox{\sc
    TraceCycle}(g) = \true) 
<a_\delta d(rm)^2 r^{2s}\frac{72}{n^{3-2s}}.$$
\end{proof}

\section{Bounding $\calS_1^+$}\label{sec:S1+}

Let $G,m,n,r$ be as in one of the lines of
Table~\ref{tbl:elements}, so $G$ is $A_n$ or $S_n$.

Recall the definitions  of an
$s$-small  and an  $s$-large cycle  and of $v$ 
from  Notation~\ref{notation} and the  notation set out in
Notation~\ref{not:p}.

\begin{proposition}\label{prop:f1}
Let $G, n, m, r$ be as in one of the lines of Table~$\ref{tbl:elements}$.
If  $n$ is such that $12{(rn)}^s + 6 \le n$ and
${(rn)}^s\log(n) \le n$ , then for a uniformly distributed random element
$g \in G$,
$$\prob( g\in \calS_1^+ \cap G\mbox{\ and\ } \mbox{\sc TraceCycle}(g) = 
\true ) \le 
a_\delta d(rm)^3\frac{6.24}{n^{1+s}}.$$
\end{proposition}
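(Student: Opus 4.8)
The plan is to follow the structure of the proof of Proposition~\ref{prop:f0}, but with an additional layer of summation — over the length $d$ of the unique $s$-large cycle $C$ of $\Del(g)$ — and with a sharper analysis of \tracecycle. First I would write $\calS_1^+ = \dot\cup_v\, \calS_1^+(v)$, where $\calS_1^+(v)$ is the set of $g \in \calS_1^+$ with $|\Del(g)| = v$, so that $v$ ranges over integers with $4(rn)^s < v \le n$. As in the proof of Proposition~\ref{prop:f0} — treating $G = S_n$ and $G = A_n$ separately and using that $rm$ is odd in the alternating case, so that every permutation of $\Del(g)$ of order dividing $rm$ already lies in $\Alt(\Del(g))$ — one gets $|\calS_1^+(v)\cap G| \le |G|\,P_1^+(v,rm)$, hence $\prob(g\in\calS_1^+(v)\cap G) \le P_1^+(v,rm)$. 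By Lemma~\ref{lem:p0}(c), $P_1^+(v,rm) = \sum_{d\in D_1^+(v)} \tfrac1d P_0(v-d,rm)$, and I would bound each $P_0(v-d,rm)$ by Lemma~\ref{lem:p0}(b) (applicable since $v-d > 3(rn)^s$, which also forces $v-d \ge 16$), picking up the factor $d(rm)^2$ that together with the divisor sum produces the $d(rm)^3$ in the statement.

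The key new point is the tracing estimate. The generic bound $16(v/n)^4$ from Lemma~\ref{TraceTwo} is too weak, since the $s$-large cycle $C$ may fill almost all of $\Del(g)$. What rescues the argument is that $d = |C|$ is a divisor of $rm$ with $d \ne m$ (because $g \notin \calN$), so by Lemma~\ref{lem:d} $d \le 2m/3 < m$; consequently any $k$-subset $\ga \subseteq C$ has $c_k(\ga,g)$ a divisor of $d$, hence strictly less than $m$, so $c_k(\ga,g) \ne r_0m$ for every $r_0 \mid r$. Therefore, in the count underlying Proposition~\ref{TraceOne}, the term $\binom{v}{k}$ (the $k$-subsets inside $\Del(g)$) can be replaced by $\binom{v}{k}-\binom{d}{k}$ (those meeting $\Del(g)\setminus C$), and — keeping the $\tfrac1{u-1}\binom{n}{k}$ contribution from Corollary~\ref{pc} exactly as before — one obtains that the proportion of $k$-subsets $\ga$ with $c_k(\ga,g)=r_0m$ for some $r_0 \mid r$ is at most $\tfrac{\binom{v}{k} - \binom{d}{k}}{\binom{n}{k}} + \tfrac1{n-v-1} \le (v/n)^k\cdot\tfrac{k(v-d)}{v} + \tfrac1{n-v-1}$, using Lemma~\ref{lem:Z}. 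Raising this to the $M\ge4$-th power (and using the trivial bound $1$ when $v > n-k-1$, as in the $\Sigma_2$ part of Proposition~\ref{prop:f0}) gives the needed tracing estimate, now genuinely sensitive to the size $v-d$ of $\Del(g)\setminus C$ rather than to $v$. Proving and correctly applying this refined estimate is the step I expect to be the main obstacle.

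Finally I would assemble everything. Setting $w = v-d$, interchanging the summation so that the sum over the divisors $d$ of $rm$ is outermost, and invoking Lemma~\ref{lem:d} to split the divisors into the at most three ``special'' ones exceeding $2m/7$ — for which $d$ is comparable to $m$, so $\tfrac1d$ is of order $1/n$ — and the remaining divisors with $d\le 2m/7$ — for which $w = v-d$ is of order $n$ throughout the range $v > n-k-1$ — one reduces to sums of the form $\sum_w w^j P_0(w,rm)$ weighted by the tracing bound, which are estimated with Lemma~\ref{lem:int1} and the arithmetic facts of Section~\ref{sec:prelim}. The hypotheses $12(rn)^s + 6 \le n$ and $(rn)^s\log n \le n$ control the resulting integrals exactly as in Proposition~\ref{prop:f0}; the divisor sum $\sum_{d\mid rm}\tfrac1d \le d(rm)/(rn)^s$, the factor $d(rm)^2$ from $P_0$, and the bookkeeping of numerical constants then combine to give $\prob(g\in\calS_1^+\cap G \text{ and } \tracecycle(g)=\true) \le a_\delta d(rm)^3\cdot\tfrac{6.24}{n^{1+s}}$, with $a_\delta$ as in (\ref{eq:adelta}). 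The delicate part of this final bookkeeping is to verify that each of the handful of terms produced by Lemma~\ref{lem:int1} carries a denominator of at least $n^{1+s}$ once the $d(rm)^3$ factor has been extracted.
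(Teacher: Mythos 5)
Your skeleton is the paper's: decompose $\calS_1^+=\dot\cup_v\calS_1^+(v)$, bound $|\calS_1^+(v)\cap G|/|G|$ by $P_1^+(v,rm)=\sum_{d\in D_1^+(v)}\tfrac1d P_0(v-d,rm)$ via Lemma~\ref{lem:p0}(c), feed in Lemma~\ref{lem:p0}(b), interchange the sums, and finish with Lemma~\ref{lem:int1} and the Lemma~\ref{lem:d} split of the divisors for the tail range $n-k\le v\le n$. But your ``key new point'' is a misdiagnosis. The generic bound $16(v/n)^4$ of Lemma~\ref{TraceTwo}(b) is \emph{not} too weak here, and the paper uses exactly it (for $v\le n-k-1$, together with the trivial bound $1$ for $n-k\le v\le n$). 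The scenario you are guarding against --- the $s$-large cycle $C$ filling almost all of $\Del(g)$ --- is excluded from $\calS_1^+$ by definition: membership requires $|\Del(g)\setminus C|>3(rn)^s$, i.e.\ $d<v-3(rn)^s$, and it is precisely this that makes $P_0(v-d,rm)=O\bigl(d(rm)^2(rn)^{2s}/(rn)^{4s}\bigr)$ small enough that, combined with $\tfrac1d\le (rn)^{-s}$ and $(v/n)^4$, the sum over $v$ comes out as $O(d(rm)^3/n^{1+s})$. The $\binom{v}{k}-\binom{d}{k}$ refinement you propose is the key idea for $\calS_1^-$ (Proposition~\ref{prop:f1bound}), where $\Del(g)\setminus C$ is small and the $P_0$ factor gives nothing; transplanting it into $\calS_1^+$ is unnecessary.

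This would only be a stylistic complaint if your refined estimate were correct and you carried the computation through, but neither holds. The inequality $\tfrac{\binom{v}{k}-\binom{d}{k}}{\binom{n}{k}}\le (v/n)^k\cdot\tfrac{k(v-d)}{v}$ is false as stated: from $1-\prod_{i=0}^{k-1}\tfrac{d-i}{v-i}\le\sum_{i=0}^{k-1}\tfrac{v-d}{v-i}$ the correct denominator is $v-k+1$, not $v$, and these differ badly when $k$ is close to $v$ (and the case $d<k$, where $\binom{d}{k}=0$, needs separate treatment). More seriously, the final paragraph is assertion rather than proof: the constant $6.24$ in the statement is the sum $4.7+1.54$ of the paper's explicit estimates of $\Sigma_1$ and $\Sigma_2$ obtained with the \emph{unrefined} tracing bound, and since your $d$-dependent tracing bound is not pointwise dominated by $16(v/n)^4$ (e.g.\ $k(v-d)/(v-k+1)$ can exceed $1$), you cannot inherit that constant --- you would have to redo the entire numerical bookkeeping, which you do not attempt. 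As written, the proposal does not establish the stated bound.
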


\begin{proof} 
The set $\calS_1^+ = \dot\cup \calS_1^+(v)$, where
$\calS_1^+(v)$ is the set of all
$g\in\calS_1^+$ with $|\Del(g)| = v$ and $v$ ranges over integers satisfying
$4(rn)^s<v\leq n$.
For a given $v$, an analogous argument to that given in the second paragraph of the proof of Proposition~\ref{prop:f0} shows that 
\begin{eqnarray*}
|\calS_1^+(v)\cap G| &\le& \binom{n}{v}\cdot v! P_1^+(v,rm)\cdot
\frac{(n-v)!}{|S_n:G|}\\ 
&= &n! \frac{P_1^+(v,rm)}{|S_n:G|}
= P_1^+(v,rm)\cdot |G|.
\end{eqnarray*}
Thus applying Lemma~\ref{lem:p0}(c) we have, for a random $g\in G$, 
\begin{eqnarray*}
\prob( g\in \calS_1^+(v)\cap G) 
&\le& P_1^+(v,rm)
= \sum_{d\in D_1^+(v)}\frac{1}{d}P_0(v-d,rm).
\end{eqnarray*}
If $|\Delta(g)| = v$
and $v \le n - k -1$, then in particular  $3\leq v\leq n-3$.
Hence by Lemma~\ref{TraceTwo}(b), given that $g\in \calS_1^+(v)\cap G$ 
with $|\Delta(g)| = v$ with $v \le n - k -1, $  
$$
\prob( \mbox{\sc TraceCycle}(g) = \true ) \le  16
\left(\frac{v}{n}\right)^4.
$$
Thus, if $v \le n-k-1$, the probability that $g\in \calS_1^+(v)\cap G$ and 
$\mbox{\sc TraceCycle}(g) = \true$ is at most
$$
\left( \sum_{d\in D_1^+(v)}\frac{1}{d}P_0(v-d,rm)\right)
16
\left(\frac{v}{n}\right)^4,$$
and if $n-k \le v$ this probability is at most
$$
 \sum_{d\in D_1^+(v)}\frac{1}{d}P_0(v-d,rm).
$$
Summing over $v$ we find
\[
\prob( g\in \calS_1^+ \cap G\mbox{\ and\ } \mbox{\sc 
TraceCycle}(g) = \true )\leq \Sigma_1 + \Sigma_2
\]
where
\begin{eqnarray*}
\Sigma_1 &=& 
16 \sum_{4{(rn)}^s< v\leq n-k-1}\left(\sum_{d\in
     D_1^+(v)}\frac{1}{d}P_0(v-d,rm)\right) 
\left(\frac{v}{n}\right)^4,\\
\Sigma_2 &=& \sum_{n-k\le v\leq n}\left(\sum_{d\in
     D_1^+(v)}\frac{1}{d}P_0(v-d,rm)\right).
\end{eqnarray*}

First  we consider  $\Sigma_1$. Interchanging  the two  summations and
taking the sum  up to $n$, we obtain the  following upper bound, where
$D_\ell$ denotes the set of all divisors $d$ of $rm$ satisfying $d \ge
{(rn)}^s.$ Note that $v > d + 3{(rn)}^s$ (see Notation~\ref{notation}).

\begin{eqnarray*}
\Sigma_1 < 16\sum_{d\in D_\ell} \frac{1}{d}\left(\sum_{3{(rn)}^s+d
  < v\le n} P_0(v-d, rm) \cdot \frac{v^{4}}{n^{4}}\right).\\
\end{eqnarray*} 

Since $rm\geq150$ by Lemma~\ref{lem:simple}(ii), we may apply 
Lemma~\ref{lem:p0}(b) with $a_\delta'=a_\delta$, and find that
 this expression is at most
\begin{eqnarray*}
\lefteqn{16\sum_{d\in D_\ell} \frac{1}{d}\left( \sum_{3{(rn)}^s+d < v\le n}
\frac{a_\delta d(rm)^2 r^{2s}n^{2s}}{(v-d)(v-d-{(rn)}^s)^3} \cdot
\frac{v^{4}}{n^{4}}\right)}\\ 
&<& 16\frac{a_\delta d(rm)^2 r^{2s}n^{2s}}{n^{4}}
\sum_{d\in D_\ell} \frac{1}{d} \left(\sum_{3{(rn)}^s+d < v\le n}
\frac{v^{4}}{(v-d-{(rn)}^s)^4}\right).\\
\end{eqnarray*}
Now we apply Lemma~\ref{lem:int1} with $t =  \ell = 4,\, a = 3{(rn)}^s + d$ and
$c = d+{(rn)}^s$. Noting that $a-c-1 = 2{(rn)}^s-1$, we obtain that this
expression is  at most
\begin{eqnarray*}
\lefteqn{16\frac{a_\delta d(rm)^2r^{2s}n^{2s}}{n^{4}}
\sum_{d\in D_\ell}\frac{1}{d} \left(
\frac{\binom{4}{0}(d+{(rn)}^s)^{4}}{3(2{(rn)}^s-1)^3} +
\frac{\binom{4}{1}(d+{(rn)}^s)^{3}}{2(2{(rn)}^s-1)^2}\right.}\\ 
&&  + \frac{\binom{4}{2}(d+{(rn)}^s)^{2}}{(2{(rn)}^s-1)} +
\binom{4}{3}(d+{(rn)}^s)^{1} \log(n)\\ 
&& \left.+ \binom{4}{4} (d+{(rn)}^s)^0(n+1-d-{(rn)}^s)^{1} 
\right).\\
\end{eqnarray*} 
Note that 
$2{(rn)}^s -1 > \frac{23}{12}r^sn^s$  by Lemma~\ref{lem:simple}(iii) 
and, since
$d \ge {(rn)}^s$, $\,$ also
$\frac{d + {(rn)}^s}{d}  \le 2.$ 
Note also that $d + (rn)^s < n$ and
$n+1-d-(rn)^s  < n$.
Hence 
\begin{eqnarray*}
\Sigma_1 &\le & 16\frac{a_\delta d(rm)^3r^{2s}n^{2s}}{n^{4}}
\left(\frac{2\cdot 12^3\cdot n^{3}}{3\cdot 23^3\cdot r^{3s}n^{3s}}
  + \frac{4\cdot 12^2 n^{2}}{23^2 r^{2s}n^{2s}}
  + \frac{12\cdot 12 n^{1}}{23 r^sn^s} \right.  \\
  &+& \left. 8\cdot \log (n) 
  + \frac{n^{1}}{{(rn)}^s}  \right)\\
&=&
\frac{16a_\delta d(rm)^3}{n^{1+s}}\left(
\frac{3456}{36501 r^{s}} + \frac{576}{529 n^{1-s}}
+ \frac{144 r^s}{23n^{2-2s}} \right. \\
&+& \left. \frac{8r^{2s}\log(n)}{n^{3-3s}} 
+ \frac{r^s}{n^{2-2s}} \right).\\
\end{eqnarray*}
Since, by hypothesis ${(rn)}^s\log(n)\le n$  and  by
Lemma~\ref{lem:simple}(i) $n^s/n \le r^sn^s/n  \le 1/12$ and
$r \ge 1$,
the last expression is at most
\begin{eqnarray*}
\frac{16a_\delta d(rm)^3}{n^{1+s}}&&\left(
\frac{3456}{36501} + \frac{576}{529\cdot 12}
+ \frac{144}{23 \cdot 12^2} 
+ \frac{8}{12^2} 
+ \frac{1}{12^2} 
 \right) \\
&\le & 4.7 a_\delta\frac{d(rm)^3}{n^{1+s}}.\\
\end{eqnarray*}

We now consider 
${\Sigma_2 = \sum_{n-k\le v\leq n}\left(\sum_{d\in 
     D_1^+(v)}\frac{1}{d}P_0(v-d,rm)\right).}$
As
$v-d > 3{(rn)}^s$ and   $n-k \ge n/2$  we have by
 Lemma~\ref{lem:p0}(b) (with $a_\delta'=a_\delta$) that
\begin{eqnarray*}
\frac{\Sigma_2}{a_\delta d(rm)^2 {(rn)}^{2s}}
&\le&  \sum_{n/2\le v\le n}
\left(\sum_{d\in      D_1^+(v)}
\frac{1}{d(v-d-{(rn)}^s)^4} \right)\\
&=& \sum_{d\in      D_1^+(v)}
\frac{1}{d}\left(\sum_{v(d)\leq v\leq n}\frac{1}{(v-d-{(rn)}^s)^4} \right) 
\end{eqnarray*}
where $v(d)=\max\{\frac{n}{2}, d+3(rn)^s\}$ since, by Notation~\ref{not:p}, 
each $d\in D_1^+(v)$ is less than
$v-3(rn)^s$. 
By Lemma~\ref{lem:inte}, this quantity is at most
\begin{eqnarray*}
\lefteqn{ \sum_{d\in      D_1^+(v)}
\frac{1}{d}\left(\int_{v(d)-1}^n
\frac{1}{(v-d-{(rn)}^s)^4}\,{\rm d}v\right)}\\
&=& \sum_{d\in      D_1^+(v)}
\frac{1}{d} \left[-\frac{1}{3}\frac{1}{(v-d-(rn)^s)^3}\right]_{v(d)-1}^n \\
&<&  \sum_{d\in      D_1^+(v)}
\frac{1}{3d}\,  \frac{1}{(v(d)-1-d-(rn)^s)^3}.\\
\end{eqnarray*}
In particular each $d\in D_1^+(v)$ is less than $m.$
By Lemma~\ref{lem:d}, there are at most three divisors 
of $rm$ which are less than $m$ and greater than $2m/7$, and the sum
of the reciprocals $\tfrac{1}{d}$ of these divisors is at most $\tfrac{7}{m}$,
which is less than $\tfrac{7.3}{n}$ since $n\ge 156$ (by Lemma~\ref{lem:simple}(ii)).
Using $v(d)\geq d+3(rn)^s$ and Lemma~\ref{lem:simple}(iii), 
the contribution from these exceptional divisors is therefore at most
$$
\frac{1}{(2(rn)^s-1)^3} \sum_{d\in D_1^+(v), d> 2m/7}\frac{1}{3d}< 
\left(\frac{12}{23(rn)^s}\right)^3\, \frac{7.3}{3n} < \frac{0.35}{(rn)^{3s}n}.
$$
Finally we estimate the contribution of the remaining elements $d$ of $D_1^+(v)$. We note that each such $d$ is at most $\tfrac{2n}{7}$ and at least $(rn)^s$, and that 
${(rn)}^s < \tfrac{n-6}{12}$ by our hypothesis. Thus, using 
$v(d)\ge \frac{n}{2}$, 
the remaining contribution is at most 
$$
\frac{d(rm)}{3{(rn)}^s}
\frac{1}{(\tfrac{n}{2}-1-\tfrac{2n}{7}-\tfrac{n-6}{12})^3}.
$$
Observe that 
$\tfrac{n}{2}-1-\tfrac{2n}{7}-\tfrac{n-6}{12}= 
\tfrac{11n - 42}{84}$ and since $n > 84$  by
Lemma~\ref{lem:simple}(a) we have 
$\tfrac{11n - 42}{84} > \tfrac{n}{8}.$ Hence, using also that $\tfrac{(rn)^s}{n}<\tfrac{1}{12}$ (by Lemma~\ref{lem:simple}(i)), the above expression is less than 
$$
\frac{d(rm)}{(rn)^s}\frac{8^3}{3n^3} < \frac{d(rm)\, 8^3}{12^2\cdot 3 \,(rn)^{3s}n}
< \frac{1.19\,d(rm)}{(rn)^{3s}n}.
$$
Thus
\begin{eqnarray*}
\frac{\Sigma_2}{ a_\delta d(rm)^2(rn)^{2s}} &<& 
\frac{0.35}{(rn)^{3s}n} + \frac{1.19\,d(rm)}{(rn)^{3s}n} \leq \frac{1.54\,d(rm)}{(rn)^{2s}n^{1+s}}
\end{eqnarray*}
and hence
\begin{eqnarray*}
\prob( g\in \calS_1^+ \cap G\mbox{\ and\ }
\mbox{\sc TraceCycle}(g) = \true) 
& < &
6.24\, a_\delta\frac{d(rm)^3}{n^{1+s}}.\\
\end{eqnarray*}
\end{proof}

\section{Bounding $\calS_{\ge 2}$}\label{sec:S2}

\begin{proposition}
\label{prop:Sge2}
Let $G,m,n,r$ be as in one of the lines of Table~$\ref{tbl:elements}$. 
Then
\[
\frac{\left|\calS_{\ge 2}\cap G\right|}{|G|}
\le \frac{d(rm)^2}{{(rn)}^{2s}}.
\]
\end{proposition}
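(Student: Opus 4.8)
The plan is a first-moment (union-bound) argument. Since $\calS_{\ge 2}\subseteq\calF\subseteq G$ we have $\calS_{\ge 2}\cap G=\calS_{\ge 2}$, and by definition every $g\in\calS_{\ge 2}$ has at least two distinct cycles whose lengths divide $rm$ and are at least $(rn)^s$; write $D_\ell=\{d : d\mid rm,\ d\ge (rn)^s\}$ for the set of such lengths (the same notation used in the proof of Proposition~\ref{prop:f1}). The idea is to count, over a uniform random $g\in S_n$, the number $Y(g)$ of \emph{ordered} pairs of distinct cycles of $g$ both of whose lengths lie in $D_\ell$; each $g\in\calS_{\ge 2}$ contributes $Y(g)\ge 2$, and this extra factor of $2$ is exactly what absorbs the index $[S_n:G]\le 2$ so that no factor is lost in the final bound.

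First I would record the elementary count: for fixed $d_1,d_2\in D_\ell$, the number of triples $(g,C_1,C_2)$ with $g\in S_n$ and $C_1,C_2$ disjoint cycles of $g$ of lengths $d_1,d_2$ respectively equals $n!/(d_1d_2)$ if $d_1+d_2\le n$ and $0$ otherwise --- one chooses and cyclically orders the $d_1$ points of $C_1$, then the $d_2$ points of $C_2$, then permutes the remaining $n-d_1-d_2$ points freely, giving $\binom{n}{d_1}(d_1-1)!\binom{n-d_1}{d_2}(d_2-1)!\,(n-d_1-d_2)!=n!/(d_1d_2)$. Summing over $d_1,d_2\in D_\ell$ (allowing $d_1=d_2$, in which case two distinct cycles of equal length still contribute two ordered pairs) gives
\[
\sum_{g\in S_n}Y(g)=\sum_{d_1,d_2\in D_\ell}\#\{(g,C_1,C_2)\}\le n!\sum_{d_1,d_2\in D_\ell}\frac{1}{d_1d_2}=n!\left(\sum_{d\in D_\ell}\frac{1}{d}\right)^{2}.
\]
Since every $d\in D_\ell$ is at least $(rn)^s$ and $|D_\ell|\le d(rm)$, we get $\sum_{d\in D_\ell}1/d\le d(rm)/(rn)^s$, hence $\sum_{g\in S_n}Y(g)\le n!\,d(rm)^2/(rn)^{2s}$.

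It then remains to assemble the pieces. Using $Y(g)\ge 2$ for $g\in\calS_{\ge 2}$,
\[
2\,|\calS_{\ge 2}|\le\sum_{g\in\calS_{\ge 2}}Y(g)\le\sum_{g\in S_n}Y(g)\le n!\,\frac{d(rm)^2}{(rn)^{2s}},
\]
so $|\calS_{\ge 2}|\le\tfrac{n!}{2}\cdot\tfrac{d(rm)^2}{(rn)^{2s}}=|A_n|\cdot\tfrac{d(rm)^2}{(rn)^{2s}}$. Finally $|G|\ge|A_n|=n!/2$ in every line of Table~\ref{tbl:elements}, so dividing through yields $\frac{|\calS_{\ge 2}\cap G|}{|G|}=\frac{|\calS_{\ge 2}|}{|G|}\le\frac{d(rm)^2}{(rn)^{2s}}$, as required.

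There is no genuine obstacle here --- it is a short first-moment estimate requiring no hypothesis on $n$ (consistent with the statement) --- so the only points needing care are bookkeeping ones: the case $d_1+d_2>n$ (harmless, the triple count is then $0$), the inclusion of the diagonal $d_1=d_2$ in the double sum, and retaining exactly the factor-of-two cushion from ``at least two ordered pairs'' so that it cancels $[S_n:G]$ without loss. If one preferred, the $A_n$ case could instead be handled by counting the triples $(g,C_1,C_2)$ directly inside $A_n$ (each odd-length cycle --- and for lines $4$--$9$ the divisors of $rm$ are odd --- is an even permutation, so about half the completions land in $A_n$), but the route above avoids splitting into cases.
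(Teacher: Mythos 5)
Your proof is correct and follows essentially the same route as the paper: a union bound over the at most $d(rm)^2$ pairs of divisors of $rm$ that are at least $(rn)^s$, with each pair contributing at most $(d_1d_2)^{-1}\le (rn)^{-2s}$. The only difference is one of care rather than substance: the paper simply asserts the per-pair proportion bound inside $G$, whereas you derive it via the first moment of the ordered-pair count over $S_n$ and use the factor of $2$ from ordering to absorb the index $[S_n:A_n]$, which handles the $A_n$ case cleanly.
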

\begin{proof}
If $g$ is an element of $\calS_{\ge 2}\cap G$ then it  has two cycles of
lengths $d_1, d_2$, where $d_i|rm$, and $d_i\ge {(rn)}^s$. There are at most
$d(rm)$ choices for each $d_i$.
Thus, there are at most $d(rm)^2$ choices for the two 
divisors
$d_1$ and $d_2$. For a given $d_1, d_2$, the proportion of elements in
$G$ having cycles of lengths $d_1$ and $d_2$ is at most
\[
{(d_1d_2)}^{-1}\le {{(rn)}^{-2s}}.
\]
Thus altogether we get a proportion of at most $d(rm)^2 
{{(rn)}^{-2s}}$.
\end{proof}

\section{Bounding $\calS_1^-$}\label{f-1}

\begin{proposition}\label{prop:f1bound}
Let $G,m,n,r$ be as in one of the lines of Table~$\ref{tbl:elements}$. 
Suppose that $n$ is such that $12{(rn)}^s + 6 \le n.$  Let $k$ be a fixed
integer with $2 \le k \le n/2.$
  Then
\begin{enumerate}
\item[(a)]the proportion of $k$-subsets $\ga$ such that
  $c_k(\ga,g)=r_0m$, for some $r_0$ dividing $r$, 
for $g \in \calS_1^{-}\cap G$, is less than $31/({(rn)}^{1-s})$.
\item[(b)] If \mbox{\sc TraceCycle} is
   Algorithm~{\rm\ref{algo:TraceCycle}} and $M$ is as
defined there, then for a uniformly distributed random element $g\in G$,
$$\prob(\mbox{\sc TraceCycle}(g) =
\true \mid g\in\calS_1^-\cap G) <
\left(\frac{31}{{(rn)}^{1-s}}\right)^M,$$
and so
$$\prob(g \in \calS_1^-\cap G \mbox{\ and\ } \tracecycle(g) = \true )<
\displaystyle{\left(\frac{31}{{(rn)}^{1-s}}\right)^M}.$$
\end{enumerate}
\end{proposition}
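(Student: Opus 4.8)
The plan is to prove part~(a) — the bound for a single uniformly random $k$-subset — and then obtain part~(b) from it by a one-line independence argument. Indeed, if $g\in\calS_1^-\cap G$ then $\tracecycle(g)=\true$ forces $c_k(\ga,g)=r_0m$ for some $r_0\mid r$ for each of the $M\ge 4$ independent uniformly distributed random $k$-subsets $\ga$ tested, so the conditional probability equals $p^M$, where $p$ is the proportion bounded in part~(a); since $p\le 1$ and $M\ge 4$, this is at most $(31/(rn)^{1-s})^M$, and multiplying by $\prob(g\in\calS_1^-\cap G)\le 1$ yields the second displayed inequality. So everything reduces to part~(a). Fix $g\in\calS_1^-\cap G$ and let $C$ be the unique $s$-large $g$-cycle inside $\Del(g)$, with $d:=|C|$, $v:=|\Del(g)|$, $u:=|\Si(g)|=n-v$. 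First I record the structural facts: $d\mid rm$, $d\ge(rn)^s$, $|\Del(g)\setminus C|\le 3(rn)^s$, and — since $g\notin\calN$ forces $d\ne m$ — Lemma~\ref{lem:d} gives $d\le 2m/3\le 2n/3$; together with $(rn)^s<n/12$ (Lemma~\ref{lem:simple}(i)) this yields $v<11n/12$, hence $u>n/12$. Then I bound the number of $k$-subsets $\ga$ with $c_k(\ga,g)=r_0m$ by splitting on the triple $(k_1,k_2,k_3)=(|\ga\cap C|,\,|\ga\cap(\Del(g)\setminus C)|,\,|\ga\cap\Si(g)|)$, using that $c_k(\ga,g)$ is the least common multiple, over the nonzero parts, of $c_{k_1}(\ga\cap C,g)\mid d$, of $c_{k_2}(\ga\cap(\Del(g)\setminus C),g)$, and of $c_{k_3}(\ga\cap\Si(g),g)\mid rm$.

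The parts of the count where $\ga$ meets $\Si(g)$ are controlled by Corollary~\ref{pc}. For $2\le k_3\le u-1$ the number of admissible $\ga\cap\Si(g)$ is at most $\frac{1}{u-1}\binom{u}{k_3}$, so summing over $(k_1,k_2,k_3)$ and using $\sum_j\binom{v}{k-j}\binom{u}{j}=\binom{n}{k}$ contributes at most $\frac{1}{u-1}<13/n<13/(rn)^{1-s}$ to the proportion (the last inequality because $(rn)^{1-s}<n$). The case $k_3=u$, i.e.\ $\ga\supseteq\Si(g)$, contributes $\binom{v}{k-u}/\binom{n}{k}\le 1/u<13/(rn)^{1-s}$ by a binomial identity, and $k_3=1$ contributes nothing since $\sigma(1,\Si(g))=0$.

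The remaining, and main, case is $k_3=0$, i.e.\ $\ga\subseteq\Del(g)$. Here, because $c_{k_1}(\ga\cap C,g)\mid d<m$, the nontrivial divisor $e:=m/\gcd(m,d)\ge 2$ of $m$ must — by comparing $p$-adic valuations in $m\mid c_k(\ga,g)$ — divide $c_{k_2}(\ga\cap(\Del(g)\setminus C),g)$; in particular $\ga\cap(\Del(g)\setminus C)\ne\emptyset$, so $1\le k_2\le|\Del(g)\setminus C|\le 3(rn)^s$ (and in fact $k_2<|\Del(g)\setminus C|$, since $\Del(g)\setminus C$ itself has $g$-orbit length $1$). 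Thus the number of such $\ga$ is at most $\binom{v}{k}-\binom{d}{k}=\sum_{i=0}^{|\Del(g)\setminus C|-1}\binom{d+i}{k-1}\le 3(rn)^s\binom{v}{k-1}$; converting to a proportion via Lemma~\ref{lem:Z}(a), the identity $\binom{n}{k-1}/\binom{n}{k}=k/(n-k+1)<2k/n$, and the elementary bound $k(v/n)^{k-1}\le k(11/12)^{k-1}<5$, gives a contribution of order $(rn)^s/n$.

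When $r=1$ this $O((rn)^s/n)$ term, added to the $\Si(g)$-contributions above, already lies below $31/(rn)^{1-s}$. The delicate point is $r\in\{2,3\}$, where $31/(rn)^{1-s}=31/(r^{1-s}n^{1-s})$ is smaller by a factor $\sim r$: here one exploits that $\gcd(m,r)=1$, so the missing factor $e$ is coprime to $r$ and hence is at least $3$ (when $r=2$) or at least $5$ (when $r=3$), and one feeds this back into the $k_3=0$ estimate — via Lemma~\ref{pc-1} applied to the small cycles of $\Del(g)\setminus C$, together with $|\Del(g)\setminus C|\le 3(rn)^s$ — to recover the extra factor. I expect this last step to be the main obstacle: the condition $e\mid c_{k_2}(\ga\cap(\Del(g)\setminus C),g)$ is by itself satisfied by a constant fraction of the relevant subsets, so the entire saving must come from the smallness of $\Del(g)\setminus C$ and from the $r$-dependent lower bound on $e$; extracting the explicit constant $31$ uniformly in $r\in\{1,2,3\}$ and in $k\in\{2,\dots,\lfloor n/2\rfloor\}$ needs the binomial inequalities of Section~\ref{sec:binomial} handled with the same care as in the proofs of Propositions~\ref{prop:f0} and~\ref{prop:f1}.
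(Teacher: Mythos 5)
Your reduction of (b) to (a), the structural setup ($d\le 2m/3\le 2n/3$, $v\le 11n/12$), and the split according to $k_3=|\ga\cap\Si(g)|$ all match the paper's proof, and your treatment of the subsets meeting $\Si(g)$ via Corollary~\ref{pc} and the Vandermonde identity is sound in principle. But the constants do not close. For the $\Si(g)$ part your chain $1/(u-1)<13/n<13/(rn)^{1-s}$ is far more wasteful than the paper's $u\ge (rn)^s+2$, which gives $1/(u-1)<1/(rn)^s<1/(rn)^{1-s}$ (using $s>1/2$) and costs only about $97/96$ of the budget of $31$, whereas yours costs $26$. The decisive gap is in the main case $\ga\subseteq\Del(g)$: your estimate $\binom{v}{k}-\binom{d}{k}\le 3(rn)^s\binom{v}{k-1}$ followed by $\binom{n}{k-1}/\binom{n}{k}<2k/n$ introduces the factor $k(v/n)^{k-1}$, whose maximum over $k$ is about $4.6$, yielding roughly $2\cdot 4.6\cdot 3\,(rn)^s/n\approx 27.7\,r/(rn)^{1-s}$. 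Added to your own $26/(rn)^{1-s}$ this already exceeds $31/(rn)^{1-s}$ when $r=1$, so your claim that the $r=1$ case is finished is not correct as the estimates stand, and for $r\in\{2,3\}$ you are short by a further factor of $2$ or $3$.

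The paper closes this case by a different, sharper estimate that makes no use of the divisibility structure of $\ga\cap(\Del(g)\setminus C)$: it only needs $\ga\not\subseteq C$, so $K_{=0}\le\binom{v}{k}-\binom{d}{k}$, and then, writing $v=d+t$ with $t\le 3(rn)^s$, Lemma~\ref{lem:ZZ} gives $\binom{v}{k}-\binom{d}{k}<\binom{d}{k}\,\frac{(1+a)^k t}{a(d-k+1)}$ with $a=1$ if $d\le m/2$ and $a=15/31$ if $d>m/2$ (in which case $d\ge 3m/5$ by Lemma~\ref{lem:d}), while Lemma~\ref{lem:Z}(a) with $\alpha=1/2$ or $2/3$ gives $\binom{d}{k}\le\alpha^{k-1}\frac{d-k+1}{n-k+1}\binom{n}{k}$. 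The factors $d-k+1$ cancel and the coefficient $\alpha^{k-1}(1+a)^k/a$ is at most $2$, respectively $31/10$, \emph{independent of $k$} — this is exactly the factor of $k(v/n)^{k-1}$ that your hockey-stick bound loses. The result is about $10.2\,r/(rn)^{1-s}\le 30.6/(rn)^{1-s}$: the factor $r\le 3$ is simply absorbed into the constant $31$, with two short side cases ($k>(rn)^s$ and $d-t+1<k\le(rn)^s$) handled by $(v/n)^k\le(11/12)^k$. Your proposed remedy for $r\in\{2,3\}$ — exploiting that $e=m/\gcd(m,d)$ is coprime to $r$ and applying Lemma~\ref{pc-1} to the small cycles of $\Del(g)\setminus C$ — is left unexecuted and attacks the wrong quantity: the needed saving comes entirely from comparing $\binom{v}{k}-\binom{d}{k}$ with $\binom{n}{k}$ more carefully, not from counting which subsets of $\Del(g)\setminus C$ have cycle length divisible by $e$.
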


\begin{proof}

We start by recording some important facts used throughout the proof.
Let $g\in\calS_1^-\cap G$ and put 
$v=|\Del(g)|$ and 
$u=|\Sigma(g)|,$ such that $u + v = n.$ The definition of 
$\calS_1^-$ implies that $g$ has a unique $s$-large cycle $C$ in
$\Del(g)$ of length $d$  and we have
\begin{itemize}
\item[(i)] $d \le n$ and $d\not=m$ since $g\in \calF;$
\item[(ii)]  $v> 4{(rn)}^s$ and $v - d \le 3{(rn)}^s.$ 
\end{itemize}
By Lemma~\ref{lem:d}
and the hypothesis
 $n \ge 12{(rn)}^s + 6,$ 
 it follows that $d\le 2m/3 \le 2n/3.$
Hence $u = n-v \ge n - d - 3{(rn)}^s \ge \frac{n}{3} - 
3{(rn)}^s \ge 4{(rn)}^s + 2 -3{(rn)}^s  = 
{(rn)}^s + 2.$ Also, 
$v\le d + 3{(rn)}^s \le \frac{2n}{3} + 3{(rn)}^s .$ 
This implies that
$v = n - u \le n - 2 -{(rn)}^s $ and hence in particular
\begin{equation}\label{eq:v}
v  \le n - 3
\end{equation}
and 
\begin{equation}\label{eq:u}
\frac{1}{u-1} < \frac{1}{{(rn)}^s } <\frac{1}{{(rn)}^{1-s}}.
\end{equation}

Set $t=v-d$ so that $t=v-d\leq 3{(rn)}^s$.   Then 
\begin{equation}\label{eq:v2}
v = d + t \le  2n/3 + 3{(rn)}^s.
\end{equation}

Suppose that $\ga$
is a $k$-subset for which $c_k(\ga,g)=r_0m$, for some $r_0$ dividing $r$,
 and set $k_0:=|\ga\cap \Sigma(g)|$. 
Then $c_{k_0}(\ga\cap\Sigma(g),g)$ divides $rm$, and hence the number of possibilities for the $k_0$-subset $\ga\cap\Sigma(g)$ is at most the number
$\sigma(k_0,\Sigma(g))$ of Corollary~\ref{pc}. 
In particular $\sigma(k_0,\Sigma(g)) = 0$ if  $k_0=1$. Thus 
$k_0=0$ or 
$2\le k_0\le \min\{u,k\},$  and the case $k_0=0$ is only
possible if $v \ge k.$ 

First we prove the following upper bound for 
the number $K_{\neg 0}= 
K_{\neg 0}(g)$ of $k$-subsets $\gamma$ such that 
$k_0=|\gamma \cap \Sigma(g)| \ge 2.$
\begin{eqnarray} 
\frac{K_{\neg 0}}{\binom{n}{k}} 
< \frac{97}{96{(rn)}^{1-s}}.\label{eq:parttwo}
\end{eqnarray}
By the remarks above
\[
K_{\neg 0} \leq \sum_{k_0=2}^{\min\{k,u\}}  \sigma(k_0,\Sigma(g))
\binom{n-u}{k-k_0}.
\]
If $k_0 \le u-1$ then, by Corollary~\ref{pc} and our considerations above,
$\sigma(k_0,\Sigma(g)) \le \frac{1}{u-1} \binom{u}{k_0}
\le \frac{1}{{(rn)}^{(1-s)}}\binom{u}{k_0}$, while if $k_0 = u$
then $\sigma(k_0,\Sigma(g))=1.$
Thus
\begin{eqnarray*}
\frac{K_{\neg 0}}{\binom{n}{k}} &\le&
\frac{1}{\binom{n}{k}(rn)^{1-s}}\sum_{k_0=2}^{\min\{u-1,k\}}
\binom{u}{k_0} \binom{n-u}{k-k_0} + R_{\neg 0}, 
\end{eqnarray*}
where 
$$R_{\neg 0} =
\begin{cases}
0 & \mbox{\ if\ } k \le u-1, \\
\frac{\binom{n-u}{k-u}}{\binom{n}{k}}& \mbox{\ if\ } k \ge u.
\end{cases}
$$ 
Hence 
\begin{eqnarray}
\frac{K_{\neg 0}}{\binom{n}{k}} 
&\le&
\frac{1}{\binom{n}{k}\,(rn)^{1-s}}\sum_{k_0=0}^{\min\{u,k\}}
\binom{u}{k_0}\binom{n-u}{k-k_0}  + R_{\neg 0}
\nonumber\\ 
& = &  \frac{1}{{(rn)}^{1-s}} + R_{\neg 0}.
\label{eq:partone}
\end{eqnarray}
Thus $(\ref{eq:parttwo})$ is proved if 
$k \le u-1$, so
suppose that $k \ge u.$
Recall that $u > {(rn)}^{1-s}+1$ by~(\ref{eq:u}).  Hence
$$
R_{\neg 0} \le \frac{\binom{n-u}{k-u}}{\binom{n}{k}} 
= \prod_{i=1}^u\frac{(k-u+i)}{(n-u+i)} \le
\left(\frac{k}{n}\right)^u \le
\left(\frac{1}{2}\right)^u \le
\frac{1}{2}\left(\frac{1}{2}\right)^{{(rn)}^{(1-s)}}.
$$
Using $n^{1-s} > 12>8$ (see Lemma~\ref{lem:simple}(i))
and Lemma~\ref{lem:ns}(a), 
we have $R_{\neg 0} \le \frac{1}{2} \left(\frac{1}{2}\right)^{{(rn)}^{(1-s)}}
< \frac{1}{2} \frac{1}{4 {(rn)}^{2(1-s)}} 
< \frac{1}{96} \frac{1}{{(rn)}^{1-s}}$,
and now the inequality $(\ref{eq:parttwo})$ follows from
inequality $(\ref{eq:partone})$.

To complete the proof of part (a) it remains to estimate  
the number $K_{=0}= K_{=0}(g)$ of $k$-subsets $\ga\subseteq
\Del(g)$ such that  
$c_k(\ga,g) =r_0m$ for some $r_0$ dividing $r$. 
Since this number is zero if $v<k$, we assume that $v \ge k$.
Recall that $C$ is the unique $s$-large cycle of $g$ contained in
$\Delta(g)$ and $d = |C|.$ 
By Lemma~\ref{lem:d},  $d\le2m/3<2n/3$. Since $m$ divides 
$c_k(\ga,g)$ it follows that $\ga\not\subseteq C$. We prove

\begin{eqnarray}
\frac{K_{=0}}{\binom{n}{k}} \le
\frac{30.6} {{(rn)}^{1-s}}. \label{eq:partthree}
\end{eqnarray}
The number $K_{=0}$ of such
$k$-subsets is at most $\binom{v}{k} - \binom{d}{k}$. 

Set $t=v-d$ so that
$t=v-d\leq 3{(rn)}^s$.   
Then
we have
$$\binom{v}{k} = \frac{1}{k!} (d+t)(d+t-1)\dots (d+t-k+1).$$

We  consider separately the cases 
(i) ${(rn)}^s < k$,\ (ii) $k \le \min\{{(rn)}^s, d-t+1\}$,\,  and
(iii) $d-t+1 < k \le {(rn)}^s$.
Recall that $(rn)^s \le d.$

Consider first Case (ii), so $k \le {(rn)}^s$ and
$d - t +1 \ge k.$ If $d \le m/2$ define $a=1$ and
observe that $\frac{t}{d-k+1} \le a$. 
If $d > m/2$ then, by Lemma~\ref{lem:d}, it follows that $d \ge 3m/5.$
In this case $\frac{t}{d-k+1} \le 
\frac{3 (rn)^s}{3m/5 - {(rn)}^s} =  
\frac{3 (rn)^s}{m(3/5 - {(rn)}^s/m)}.$  
By the hypothesis ${(rn)}^s \le (n-6)/12 \le m/12$ and by
Lemma~\ref{lem:simple}(i) we have then
$\frac{t}{d-k+1} \le  
\frac{ 3}{12} \frac{1}{(3/5 - 1/12)} = \frac{15}{31}.$
In this case define $a =   \frac{15}{31}.$ Then again
$\frac{t}{d-k+1} \le  a.$
Setting $d_{(k)}:=d(d-1)\dots(d-k+1)$, 
by Lemma~\ref{lem:ZZ} we obtain
\begin{eqnarray}
\binom{v}{k} &=& \frac{1}{k!} (d+t)(d+t-1)\dots (d+t-k+1)\nonumber \\
&<&\frac{1}{k!}\left(d_{(k)}\left(1+\frac{(1+a)^kt}{a(d-k+1)}\right)\right)\nonumber \\
&=&\binom{d}{k}\left( 1+\frac{(1+a)^kt}{a(d-k+1)}\right).\label{eq:vk}
\end{eqnarray}
If  $d\leq m/2$ we have $a=1$ and so
\begin{eqnarray*}
\binom{v}{k} 
&\leq&\binom{d}{k} + \binom{d}{k}
\frac{2^kt}{d-k+1}.
\end{eqnarray*}
Applying Lemma~\ref{lem:Z}(a) with $\alpha=\tfrac{1}{2}$,
\[
\binom{d}{k}\leq 
\frac{1}{2^{k-1}}\binom{n}{k}\frac{d-k+1}{n-k+1}.
\]
Hence
\[
K_{=0}\leq \binom{v}{k}-\binom{d}{k}<\binom{d}{k}\frac{2^kt}{d-k+1}\leq 
\binom{n}{k}
\frac{2t}{n-k+1} <
\binom{n}{k}
\frac{2t}{n-k}.
\]
On the other hand, if $m/2 < d\leq 2n/3$, then $a =   \frac{15}{31},$ and (\ref{eq:vk}) becomes
\begin{eqnarray*}
\binom{v}{k}
&\leq&\binom{d}{k}+ \binom{d}{k}{\left(\frac{46}{31}\right)}^k\frac{31t}{15(d-k+1)}.\\
\end{eqnarray*}
By Lemma~\ref{lem:Z}(a) with $\alpha=\tfrac{2}{3}$,
\[
\binom{d}{k}\leq 
\frac{2^{k-1}}{3^{k-1}}\binom{n}{k}\frac{d-k+1}{n-k+1}
\]
and hence
\begin{eqnarray*}
K_{=0}&=& \binom{v}{k}-\binom{d}{k}
<\binom{d}{k}\frac{{(46/31)}^k31t}{15(d-k+1)}\\
& < &
\binom{n}{k}
\frac{2^{k-1}}{3^{k-1}}\frac{{(46/31)}^k 31t}{15(n-k+1)} \\
& < &
\binom{n}{k}
\frac{92^{k}}{93^{k}}\frac{31t}{10(n-k)}  <
\binom{n}{k}
\frac{31t}{10(n-k)}.
\end{eqnarray*}
Note that by Lemma~\ref{lem:simple}, since $k \le {(rn)}^s$ and by
our assumptions, $\frac{t}{n-k} \le  \frac{3(rn)^s}{n(1-k/n)} 
\le \frac{3{(rn)}^s}{n(1-{(rn)}^s/n)}
\le \frac{3{(rn)}^s}{n(11/12)} 
=   \frac{36{(rn)}^s}{11n}.$ 

Thus for all $d$ we have
\begin{eqnarray*}
\frac{K_{=0}}{\binom{n}{k}}& < & \frac{31\cdot 36}{10\cdot 11}  \cdot \frac{ {(rn)}^s}{n} <
\frac{ 10.2\,r}{{(rn)}^{1-s}}\leq \frac{ 30.6}{{(rn)}^{1-s}} 
\end{eqnarray*}
and (\ref{eq:partthree}) is proved for Case (ii).

Now consider Cases (i) and (iii).
Recall  from~(\ref{eq:v2}) that $v = d + t \le 
 2n/3 + 3{(rn)}^s$. By Lemma~\ref{lem:simple}(i),
${(rn)}^s \le \frac{1}{12}n.$  Therefore $v \le 2n/3 + \frac{3}{12}n = 
\frac{11}{12}n.$  This shows, using Lemma~\ref{lem:Z}(a), that 
\begin{eqnarray*}
\frac{K_{=0}}{\binom{n}{k}} &\leq &
 \frac{\binom{v}{k}-\binom{d}{k}}{\binom{n}{k}}
<\frac{\binom{v}{k}}{\binom{n}{k}} \\
& \leq & \left(\frac{v}{n} \right)^k
 \leq  \left(\frac{11}{12} \right)^k. \\
\end{eqnarray*}

In Case (i) we have $k > {(rn)}^s$ and hence, observing that ${(rn)}^s >
 n^{1/2} > 12$ by Lemma~\ref{lem:simple}(i),
and using Lemma~\ref{lem:ns}(b), we have
$\left(\frac{11}{12} \right)^k < \left(\frac{11}{12} \right)^{{(rn)}^s}
 < \frac{5}{{(rn)}^s} < \frac{5}{{(rn)}^{1-s}}.$  Thus
$\frac{K_{=0}}{\binom{n}{k}}< \frac{5}{{(rn)}^{1-s}}$ and (\ref{eq:partthree}) holds
 for Case (i).

In Case (iii) 
we have ${(rn)}^s \ge k > d-t+1$ and so
$d < 4{(rn)}^s$ as $t \le 3{(rn)}^s.$ Therefore, $v = d + t < 7{(rn)}^s$, and
using Lemmas~\ref{lem:simple}(i) and~\ref{lem:Z}(a),
\begin{eqnarray*}
\frac{K_{=0}}{\binom{n}{k}} &\le & 
  \left( \frac{v}{n}\right)^k 
 <  \left( \frac{7{(rn)}^s}{n}\right)^k \\
& \le&    \left(\frac{7{(rn)}^s}{n}\right)^2
 <   \frac{49  {(rn)}^s}{12 n} \leq  \frac{49 }{{4(rn)}^{1-s}}.
\end{eqnarray*}
Thus $(\ref{eq:partthree})$ holds for Case (iii) and hence in all cases.

Combining (\ref{eq:partthree})  with
$(\ref{eq:parttwo})$, we
conclude that the proportion of $k$-subsets $\ga$
such that $c_k(\ga,g)=r_0m$, for some $r_0$ dividing $r$, is less than $31/( {(rn)}^{1-s})$ for all
values of $k$ and $v$. This proves  (a).

Now $\tracecycle(g)=\true$ if and only if $c_k(\ga,g)=r_0m$, for some $r_0$ dividing $r$, for each of the $M$
independent uniformly distributed random $k$-sets $\ga$ tested in the
algorithm. Thus, given $g\in\calS_1^-\cap G$,  the  probability that {\sc TraceCycle}$(g)= \true$ 
 is at most $\left(31/({(rn)}^{1-s})\right)^M$. 

The last assertion follows on noting 
 that for events $A$ and $B$ we have $\prob(A\cap B) =
\prob(A) \prob(B\mid A) \le \prob(B\mid A)$.
\end{proof}

\section*{Acknowledgements}

The first  author acknowledges the  support of EPSRC  grant EP/C523229
and  the  second and  third  authors  acknowledge  the support  of  ARC
Discovery  grants  DP0879134 and  DP140100416.   
 We thank Yohei Negi and Sven Reichard for some discussions
on an  early draft of  this paper. We  thank an anonymous  referee for
valuable suggestions.

\footnotesize

\end{document}